\pdfoutput=1
\pdfmapfile{+didot.map}
\documentclass[english,11pt]{smfartreloaded}
\usepackage[utf8]{inputenc}
\usepackage[T1]{fontenc}
\usepackage[english,french]{babel}

\usepackage{ulem}
\normalem

\newcommand{\varnothing}{\emptyset}
\newcommand{\coloneqq}{:=}
\usepackage{tikz}

\linespread{1.1}
\usepackage{xcolor,graphicx}
\usepackage[a4paper,vmargin={2cm,2cm},hmargin={2.5cm,2.5cm}]{geometry}

\usepackage[pdfpagemode=UseNone,bookmarksopen=false,colorlinks=true,urlcolor=blue,citecolor=blue,citebordercolor=blue,linkcolor=blue]{hyperref}

\usepackage{bbm}
\usepackage{enumitem}

\usepackage[capitalize]{cleveref}

\newcommand{\Z}{\mathbb{Z}}
\renewcommand{\P}{\mathbb{P}}

\def\U{\mathbb{U}}
\def\N{\mathbb{N}}
\newcommand{\R}{\mathbb{R}}
\newcommand{\D}{\mathbb{D}}

\def\d{{\rm d}}

\renewcommand{\epsilon}{\varepsilon}
\newcommand\Es[1]{\mathbb{E}\left[#1\right]}
\renewcommand\Pr[1]{\mathbb{P}\left(#1\right)}

\newtheorem{theorem}{Theorem}[]

\newtheorem{proposition}[theorem]{Proposition}
\newtheorem{lemma}[theorem]{Lemma}
\newtheorem{corollary}[theorem]{Corollary}

\theoremstyle{definition}
\newtheorem{remark}[theorem]{Remark}  
\newtheorem{example}[theorem]{Example}

\def\llbracket{[\hspace{-.10em} [ }
\def\rrbracket{ ] \hspace{-.10em}]}

\def\build#1_#2^#3{\mathrel{
\mathop{\kern 0pt#1}\limits_{#2}^{#3}}}

\newcommand{\Xn}{X^{(n)}}
\newcommand{\Yn}{Y^{(n)}}
\newcommand{\Zn}{Z^{(n)}}
\newcommand{\Wn}{W^{(n)}}

\newcommand{\Ygn}{Y^{(n)}}
\newcommand{\Zgn}{\vec{Z}^{(n)}}
\newcommand{\Wgn}{\vec{W}^{(n)}}
\newcommand{\Bn}{B^{(n)}}

\newcommand{\Rn}{R^{(n)}}

\newcommand{\cA}{\mathcal{A}}
\newcommand{\Tc}{\mathcal{T}}

\newcommand{\Loop}{\mathsf{Loop}}
\newcommand{\rad}{\mathsf{rad}}

\newcommand{\q}{\mathsf{q}}
\newcommand{\m}{\mathbf{m}}

\newcommand\BGW{\textup{\textrm{BGW}}}
\def \W {\mathsf{W}}

\def \Mc {\mathcal{M}}
\def \Tn {\mathcal{T}_{n}}
\def \Tgn {\mathcal{T}_{\geq n}}

\title[Condensation in  Cauchy Bienaymé--Galton--Watson trees]{Condensation \\ in \\  critical Cauchy Bienaymé--Galton--Watson  trees}

\author{\FirstBigRestSmallUnUn{Igor} \FirstBigRestSmallUnUn{Kortchemski}}
\address{CNRS \& CMAP, \'Ecole polytechnique}
\email{igor.kortchemski@math.cnrs.fr}

\author{\FirstBigRestSmallUnUn{Loïc} \FirstBigRestSmallUnUn{Richier}}
\address{CMAP, \'Ecole polytechnique}
\email{loic.richier@polytechnique.edu}

\subjclass{Primary
60J80 ·~
60G50 ·~
60F17 ·~
05C05;~
 Secondary
05C80 ·~ 
60C05}

\keywords{Condensation; Bienaymé--Galton--Watson tree; Cauchy process; planar map.}

\begin{document}
\maketitle

\begin{abstract}
We are interested in the structure of large  Bienaymé--Galton--Watson random trees whose offspring distribution is critical and falls within the domain of attraction of a stable law of index $\alpha=1$. In stark contrast to the case $\alpha \in (1,2]$, we show that a condensation phenomenon occurs: in such trees, one vertex with macroscopic degree emerges.  To this end, we establish limit theorems for centered downwards skip-free random walks whose steps are in the domain of attraction of a Cauchy distribution, when conditioned on a late entrance in the negative real line. These results are of independent interest. As an application, we study the geometry of the boundary of random planar maps in a specific regime (called non-generic of parameter $3/2$). This supports the conjecture that faces in Le Gall \& Miermont's  $3/2$-stable maps are self-avoiding.
\end{abstract}

%

\begin{figure}[h!]
\centering
\begin{minipage}{.46\textwidth}
  \centering
  \includegraphics[width=\linewidth]{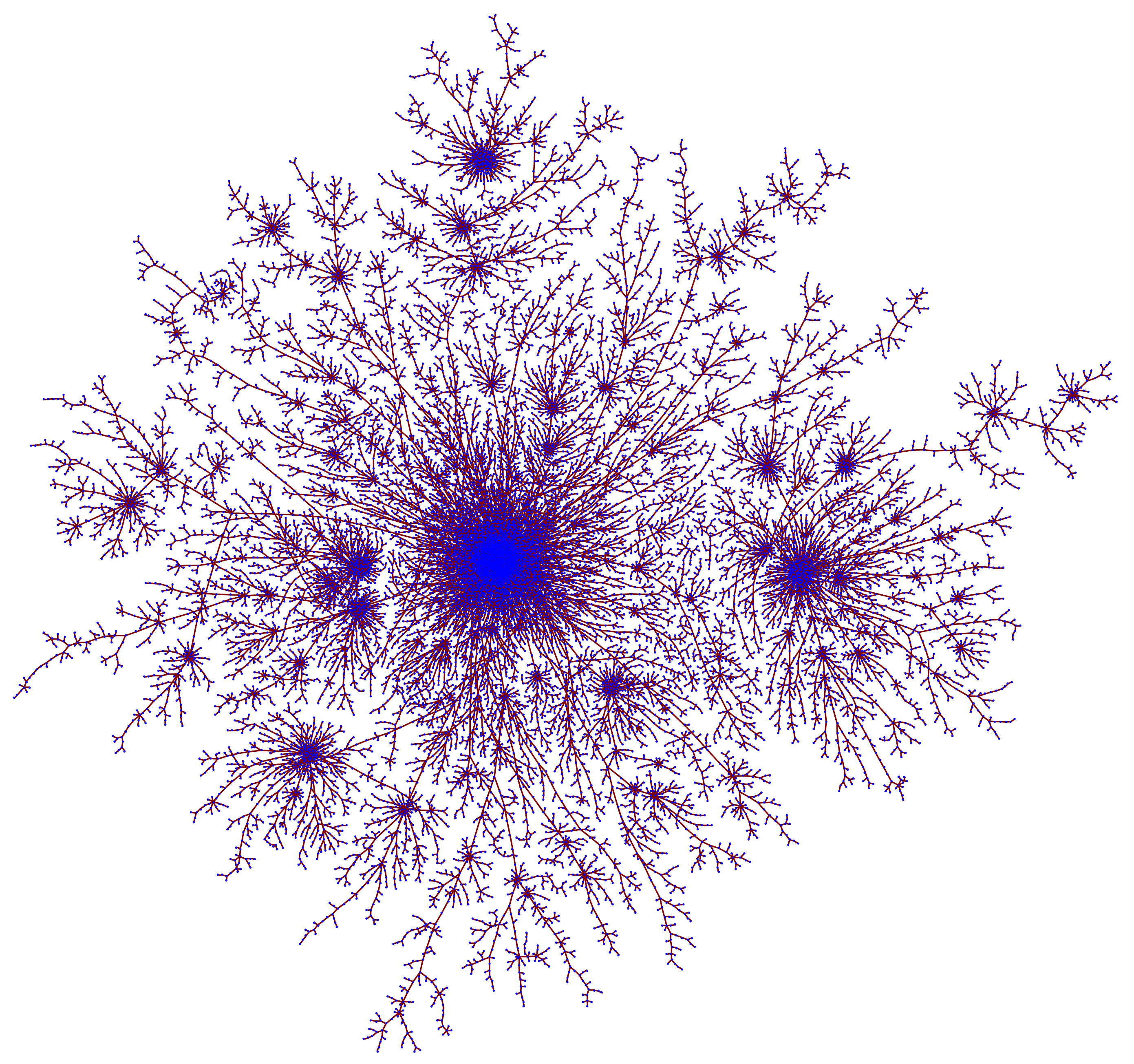}
  \label{fig:test1}
\end{minipage}
\hfill
\begin{minipage}{.48\textwidth} 
  \centering
  \includegraphics[width=\linewidth]{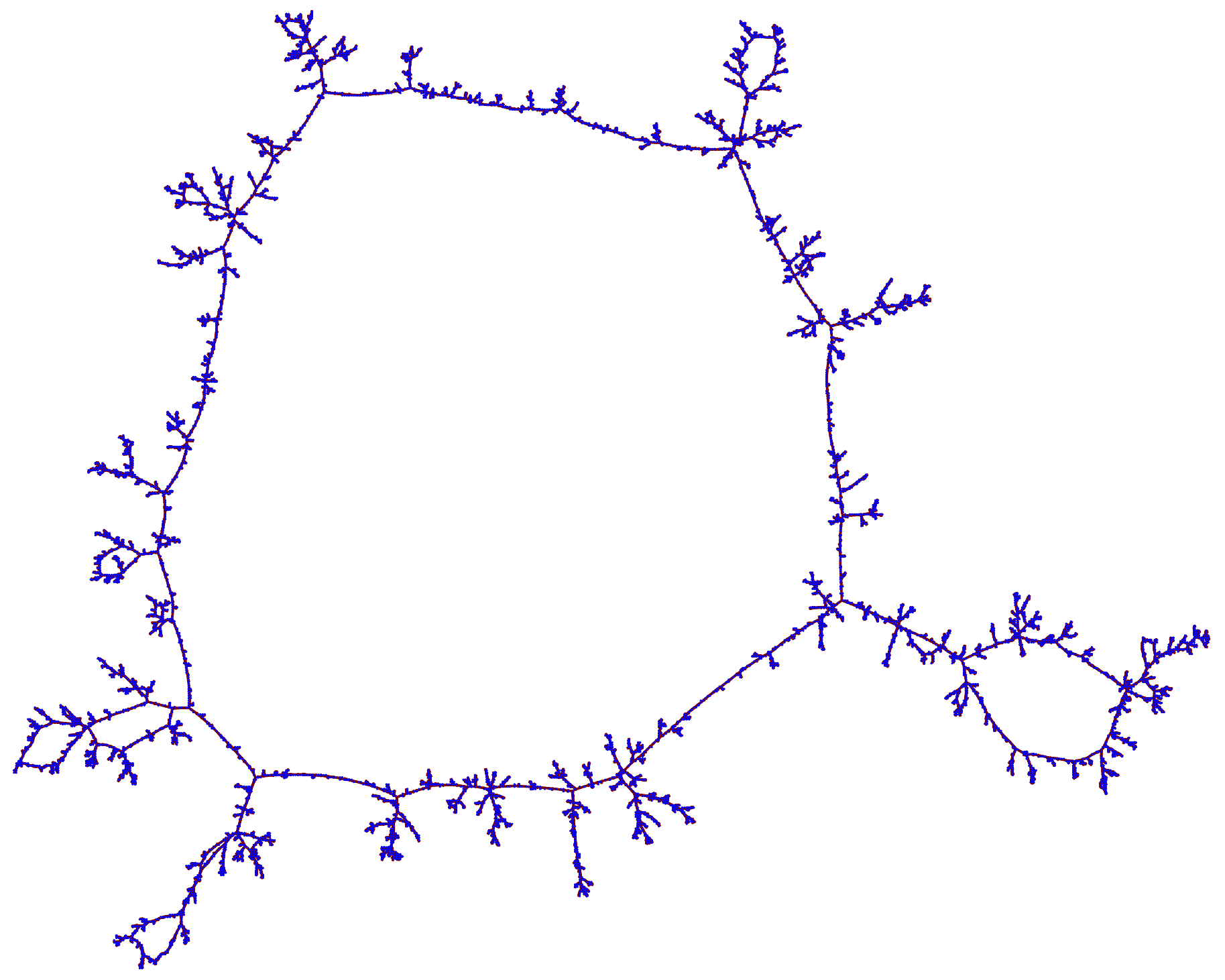}
  \label{fig:test2}
\end{minipage}
\caption{\label{fig:tree}Left: an embedding in the plane of a  Bienaymé--Galton--Watson tree with a critical  offspring distribution $\mu$ such that $\mu(k)= \frac{1}{3k^{2}\ln(k)^{2}}$ for $k \geq 3$, having $20000$ vertices (simulated using \cite{Dev12}).  Right: its associated looptree.}
\end{figure}

\pagebreak

\section{Introduction}
\label{sec:Intro}

\subsection{Context}

This work is concerned with the influence of the offspring distribution on the geometry of large Bienaymé--Galton--Watson (BGW) trees. The usual approach to understand the geometry of a BGW tree conditioned on having size $n$, that we denote by $ \mathcal{T}_{n}$, consists in studying the limit of $ \mathcal{T}_{n}$ as $n \rightarrow \infty$. There are essentially two notions of limits for random trees: the ``scaling'' limit framework (where one studies rescaled versions of the tree) and the ``local'' limit framework (where one looks at finite neighborhoods of a vertex).

\paragraph*{Limits for critical offspring distributions} The study of local limits of BGW trees with critical offspring distribution $\mu$ (i.e.\ with mean $m_{\mu}=1$) was initiated by Kesten in \cite{Kes86}. Assuming also that $\mu$  has finite variance, he proved that $\mathcal{T}_{n}$ (actually under a slightly different conditioning) converges locally in distribution as $n \to \infty$ to the so-called critical BGW tree conditioned to survive (which is a random locally finite tree with an infinite ``spine''). The same result was later established under a sole criticality assumption by Janson \cite{Jan12}. 

In the scaling limit setting, Aldous \cite{Ald93} showed that when $\mu$ has finite variance, the (rescaled) contour function of the tree converges in distribution to the Brownian excursion, which in turns codes the Brownian \textit{continuum random tree}. The second moment condition on $ \mu$ was later relaxed by Duquesne \cite {Du03} (see also \cite{Kor13}), who focused on  the case where $ \mu$ belongs to the domain of attraction of a stable law of index $ \alpha \in (1,2]$ (when $\mu$ has infinite variance, this means that $\mu([i,\infty))= {L(i)}/i^{\alpha}$ with $L$ a slowly varying function at $\infty$). He showed that the (rescaled) contour function of $  \mathcal{T}_{n}$ converges in distribution towards the normalized excursion of the $ \alpha$-stable height process, coding in turn the so-called $\alpha$-stable tree introduced in \cite{LGLJ98,DLG02}.

\paragraph*{Limits for subcritical offspring distributions} When the offspring distribution $\mu$ is subcritical (i.e.\ with mean $m_{\mu}<1$), the geometry of $\Tn$ is in general very different. Jonsson \& Stef\'ansson \cite{JS10} showed that if $\mu(i) \sim c/i^{\alpha+1}$ as $i \rightarrow \infty$ with $\alpha>1$, a condensation phenomenon occurs:  with probability tending to $1$ as $ n \rightarrow \infty$, the  maximal degree of $  \mathcal{T}_{n}$ is asymptotic to $(1- m_{\mu})n$. In addition, they showed that $ \mathcal{T}_{n}$ converges locally in distribution to a random tree that has a unique vertex of infinite degree (in sharp contrast with Kesten's tree).

These results were improved in \cite{Kor15}, which deals with the case where $\mu$ is subcritical and $\mu(i)=L(i)/i^{\alpha+1}$ with $L$ slowly varying at infinity and $\alpha>1$.
It was shown, roughly speaking, that $ \mathcal{T}_{n}$ can be constructed as a finite spine with height following a geometric random variable (with a finite number of BGW trees grafted on it), and approximately $(1-m_{\mu})n$ BGW trees grafted on the top of the spine. In some sense, the vertex with maximal degree of $ \mathcal{T}_{n}$ ``converges''  to the vertex of infinite degree in the local limit, so that this limit describes rather accurately the whole tree.

The behaviour of BGW trees when $\mu$ is subcritical and in the domain of attraction of a stable law is not known in full generality without regularity assumptions on $\mu(n)$. However, the geometry of $ \mathcal{T}_{ \geq n}$, which is the BGW tree under the weaker conditioning  to have size \textit{at least} $n$ has been described in \cite{KR18}, in view of applications to random planar maps in a case where regularity assumptions on $\mu(n)$ are unknown.

\subsubsection*{Critical Cauchy BGW trees.}
 The purpose of this work is to investigate a class of offspring distributions which has been left aside until now, namely offspring distributions which are critical and belong to the domain of attraction of a stable law of index $1$. We will prove that even though $ \mathcal{T}_{n}$ converges locally in distribution to the  critical $\BGW$ tree conditioned to survive (that has an infinite spine), a condensation phenomenon occurs. More precisely, with probability tending to $1$ as $n \rightarrow \infty$, the maximal degree in $\Tn$ dominates the others, but there are many vertices with degree of order $n$ up to a slowly varying function (in particular, this answers negatively Problem 19.30 in \cite{Jan12}, as we will see). This means that vertices with macroscopic degrees ``escape to infinity'' and disappear in the local limit. 

Although interesting for itself, this has applications to the study of the  boundary of non-generic critical Boltzmann maps with parameter $3/2$, as will be explained in Section \ref{sec:Maps}.

\medskip

Note that depending on $\mu$, Janson \cite{Jan12} classified in full generality the local limits of $ \mathcal{T}_{n}$. However, this local convergence is not sufficient to understand global properties of the tree.
For example, Janson \cite[Example 19.37]{Jan12} gives examples of BGW trees converging locally to the same limit, but, roughly speaking, such that in one case all vertices have degrees $o(n)$, and in the second case there are two vertices of degree  $n/3$.
In addition, outside of the class of critical offspring distributions in the domain of attraction of a stable law of index $\alpha \in (1,2]$, it is folklore that  the contour function of $ \mathcal{T}_{n}$ does not have non-trivial scaling limits. It is therefore natural to wonder whether one could still describe the global structure of $ \mathcal{T}_{n}$ outside of this class.

In the recent years, it has been realized that BGW trees in which a condensation phenomenon occurs code a variety of random combinatorial structures such as random planar maps \cite{Add15,JS15,Ric17}, outerplanar maps \cite{SS17}, supercritical percolation clusters of random triangulations \cite{CK15} or minimal factorizations \cite{FK17}. See \cite{Stu16} for a combinatorial  framework and further examples. These applications are one of the motivations for the study of the fine structure of such large conditioned BGW trees.

\subsection{Looptrees}

In order to study the condensation phenomenon, the notion of a \emph{looptree} will be useful. Following \cite{CK14b}, with every plane tree $ \tau$ we associate a graph denoted by $\Loop(\tau)$ and called a looptree. This graph has the same set of vertices as $\tau$, and for every vertices $u,v\in \tau$, there is an edge between $u$ and $v$ in $\Loop(\tau)$ if and only if $u$ and $v$ are consecutive children of the same parent in $\tau$, or if $v$ is the first or the last child of $u$ in $\tau$ (see Figure \ref{fig:loopintro} for an example). We view $\mathsf{Loop}( \tau)$ as a compact metric space by endowing its vertices with the graph distance.

 \begin{figure}[!h]
 \begin{center}
 \includegraphics[width=  \linewidth]{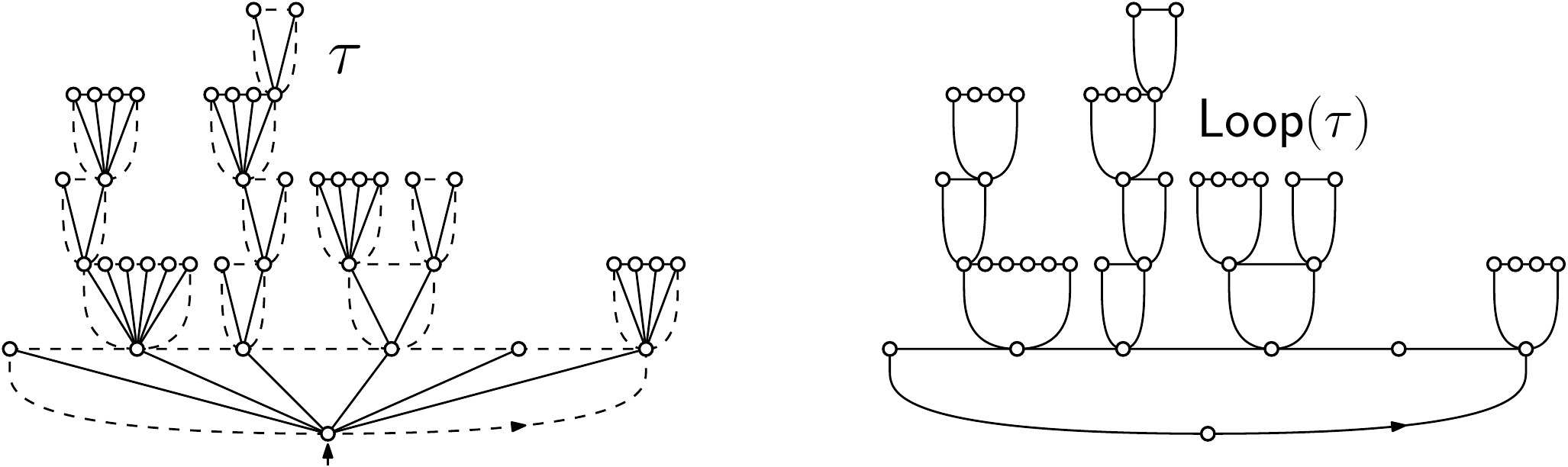}
 \caption{ \label{fig:loopintro}A plane tree $\tau$ and its associated looptree $ \mathsf{Loop}( \tau)$.}
 \end{center}
 \end{figure}
 
The notion of a looptree is very convenient to give a precise formulation of the condensation principle. Namely, we say that a sequence $(\tau_{n} : n \geq 1)$ of plane trees exhibits (global) condensation if there exists a sequence $\gamma_{n} \rightarrow \infty $ and a positive random variable $V$ such that the convergence
\begin{equation}
\label{eq:condensation}
\frac{1}{\gamma_{n}} \cdot \Loop(\tau_{n})  \quad \xrightarrow[n\to\infty]{(d)} \quad V \cdot \mathbb{S}_1
\end{equation}
holds in distribution with respect to the Gromov--Hausdorff topology (see \cite[Chapter 7.3]{burago_course_2001} for background), where for every $\lambda>0$ and every metric space $(E,d)$, $\lambda\cdot E$ stands for $(E,\lambda \cdot d)$ and $\mathbb{S}_{1}$ is the unit circle.

Since, intuitively speaking, $\Loop(\tau_n)$ encodes the structure of large degrees in $\tau_n$, the convergence  \eqref{eq:condensation} indeed tells that a unique vertex of macroscopic degree (of order $\gamma_{n}$) governs the structure of $\tau_{n}$.

Translated in terms of looptrees, the results of \cite{Kor15} indeed show that when $\mu$ is subcritical and $\mu(i)=L(i)/i^{\alpha+1}$ with $L$ slowly varying at infinity and $\alpha>1$, if $ \mathcal{T}_{n}$ is a $\BGW$ tree with offspring distribution $\mu$ conditioned on having $n$ vertices, then
\[ \frac{1}{n} \cdot \Loop( \mathcal{T}_{n})  \quad \xrightarrow[n\to\infty]{(d)} \quad  (1-m_{\mu}) \cdot \mathbb{S}_{1},\]
where $m_{\mu}$ is the mean of $\mu$. As we will see, condensation occurs for a $\BGW$ tree with \emph{critical} offspring distribution in the domain of attraction of a stable law of index $1$, but at a scale which is negligible compared to the total size of the tree.

\subsection{Framework and scaling constants}

Let $L$ be a slowly varying function at $\infty$ (see \cite{BGT89} for background on slowly varying functions). Throughout this work, we shall work with offspring distributions $\mu$ such that 
\begin{center}
\uwave{\hspace{\linewidth}}
\begin{equation}
   \label{eq:hypmu}
  \mu \text{ is critical} \qquad \text{and} \qquad  \mu([n,\infty))  \quad \mathop{\sim}_{n \rightarrow \infty} \quad  \frac{L(n)}{n}.
  \tag{$\textrm{H}_\mu$}
   \end{equation}
   \uwave{\hspace{\linewidth}}
\end{center}

\medskip
   
We now consider BGW trees with critical offspring distribution $\mu$ ($\BGW_\mu$ in short). Let us introduce some important scaling constants which will appear in the description of large $\BGW_{\mu}$ trees. To this end, we use a 
random variable $X$ with law given by $\P(X=i)=\mu(i+1)$ for $i \geq -1$ (observe that $X$ is centered since $\mu$ is critical). 
Let $(a_{n} : n \geq 1)$ and $(b_{n} : n \geq 1)$ be sequences such that
\begin{equation}
\label{eq:defanbn} n \P(X \geq a_{n})  \quad \xrightarrow[n\to\infty]{} \quad  1, \qquad b_{n}=n \Es{X \mathbbm{1}_{|X| \leq a_{n}}}.
\end{equation} 
The main reason why these scaling constants appear is the following: if $(X_{i}:i \geq 1)$ is a sequence of i.i.d.~random variables distributed as $X$, then the convergence
\[ \frac{ X_{1}+ \cdots+X_{n}-b_{n}}{a_{n}}  \quad \xrightarrow[n\to\infty]{(d)} \quad  \mathcal{C}_{1}\] 
holds in distribution, where $ \mathcal{C}_{1}$ is the random variable with Laplace transform given by $\Es{e^{-\lambda \mathcal{C}_{1} } }=e^{\lambda \ln(\lambda)}$ for $\lambda>0$ ($\mathcal{C}_1$ is an asymmetric Cauchy random variable with skewness $1$, see \cite[Chap. IX.8 and Eq.~(8.15) p315]{Fel71}).

It is well known that $a_{n}$ and $b_{n}$ are both regularly varying with index $1$, and that $b_{n} \rightarrow -\infty$ and $|b_{n}|/a_{n} \rightarrow \infty$ as $n \rightarrow \infty$. One can express an asymptotic equivalent of $b_{n}$ in terms of $L$, see~\eqref{eq:bn}.

\smallskip
 
For example, if $\mu(n) \sim \frac{c}{n^{2} \ln(n)^{2}}$, we have $a_{n} \sim  \frac{cn}{\ln(n)^2}$ and $ b_{n} \sim - \frac{cn}{\ln(n)}$ (see  Example \ref{ex:1}). We encourage the reader to keep in mind this example to feel the orders of magnitude involved in the limit theorems.
  
    \subsection{Local conditioning}

We start with the study of a $\BGW_{\mu}$ tree conditioned on having exactly $n$ vertices, which will be denoted by  $ \mathcal{T}_{  n}$.   As in the subcritical case considered in \cite{Kor15}, it is not clear how to analyze the behavior of $ \mathcal{T}_{n}$ under a sole assumption on $ \mu([n,\infty))$. For this reason, when studying this local conditioning, as in \cite{Kor15} we shall work under the stronger assumption that

\begin{center}
\uwave{\hspace{\linewidth}}
\begin{equation}
   \label{eq:hypmustar}
  \mu \text{ is critical} \qquad \text{and} \qquad  \mu(n)  \quad \mathop{\sim}_{n \rightarrow \infty} \quad  \frac{L(n)}{n^2}.
  \tag{$\textrm{H}_\mu^{\mathrm{loc}}$}
   \end{equation}\uwave{\hspace{\linewidth}}
\end{center} 

\medskip

Of course, this implies \eqref{eq:hypmu}, but the converse is not true. Then, denoting by $\Delta(\tau)$ the maximal degree of a tree $\tau$, our main result is the following.
       
\begin{theorem}
  \label{thm:loop-local}
  Assume that $\mu$ satisfies \eqref{eq:hypmustar}. Then the convergences 
   \begin{equation*}\frac{\Delta( \mathcal{T}_{  n})-|b_{n}| }{a_{n}}   \quad \xrightarrow[n\to\infty]{(d)} \quad \mathcal{C}_{1} \qquad \text{and}   \qquad \frac{1}{|b_{n}|} \cdot \Loop( \mathcal{T}_{n})    \quad \xrightarrow[n\to\infty]{(d)} \quad   \mathbb{S}_1
   \end{equation*} hold in distribution (with respect to the Gromov--Hausdorff topology for the second one).
   \end{theorem}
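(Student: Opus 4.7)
I would encode $\mathcal{T}_n$ through its Łukasiewicz path $(W_k)_{0 \le k \le n}$. Under \eqref{eq:hypmustar}, this is distributed as a random walk with step law $\P(X=i)=\mu(i+1)$, $i\ge -1$, in the Cauchy domain of attraction with scaling constants $a_n, b_n$ (so $a_n = o(|b_n|)$), conditioned on $\{W_n=-1\}\cap\{W_k>-1,\ k<n\}$. Recall that the out-degree of the $i$-th vertex of $\mathcal{T}_n$ in depth-first order equals $W_i-W_{i-1}+1$, so the first assertion of the theorem is \emph{exactly} the statement that the largest jump of the conditioned walk is $|b_n|+a_n\mathcal{C}_1+o_{\P}(a_n)$. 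This is the one-big-jump principle for centered, downwards skip-free random walks in the Cauchy domain conditioned on a late entrance in $\Z_{<0}$, which we intend to establish in the preceding sections of the paper.

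For the looptree convergence, let $u_n^\star$ be the (eventually a.s.\ unique) vertex of $\mathcal{T}_n$ realizing the maximum degree, and let $C_n\subset\Loop(\mathcal{T}_n)$ be the corresponding cycle, of length $\Delta(\mathcal{T}_n)+1 = |b_n|(1+o_{\P}(1))$. Then $|b_n|^{-1}\cdot C_n \to \mathbb{S}_1$ in the Gromov--Hausdorff sense, so it is enough to prove
\[
|b_n|^{-1}\, d_{\mathrm{GH}}\bigl(\Loop(\mathcal{T}_n),\, C_n\bigr) \;\xrightarrow[n\to\infty]{(\P)}\; 0.
\]
By construction of $\Loop$, this distance is bounded above by the maximum, over $v\in\mathcal{T}_n$, of the sum of $(c_w+1)$ over vertices $w\ne u_n^\star$ lying on the ancestral path between $v$ and $u_n^\star$. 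By standard looptree estimates (in the spirit of \cite{CK14b,Kor15}), this path-sum is controlled by the oscillation $\max_k W_k + |\min_k W_k|$ of the walk \emph{with the single large jump at time $k_n^\star$ excised}.

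The main obstacle is thus to establish this residual control, namely that once its unique giant jump is removed, the conditioned walk stays in a strip of width $o_{\P}(|b_n|)$. Unlike the $\alpha\in(1,2]$ regime, the centering $b_n$ is of \emph{larger} order than the fluctuation scale $a_n$, so this bound does not follow from the unconditional Cauchy fluctuation theorem. One must exploit that the conditioning $\{W_n=-1\}\cap\{W_k>-1,\ k<n\}$ selects a unique macroscopic positive jump, occurring at a random time $k_n^\star$, which absorbs the full centering contribution, while the residual walk of length $n-1$ behaves as a typical unconditioned walk with supremum of order $a_n\ll|b_n|$. This finer decomposition should again be a byproduct of the one-big-jump theorems proved in the earlier sections; plugging it into the looptree bound completes the proof.
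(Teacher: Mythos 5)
Your overall route matches the paper's: encode $\mathcal{T}_n$ via its {\L}ukasiewicz path conditioned on $\{W_n=-1,\ W_k\ge 0\ \text{for}\ k<n\}$, identify the cycle $C_n$ coming from the vertex of maximal degree via the one-big-jump theorem (established in the paper through a total-variation approximation in the style of Armend\'ariz--Loulakis, \cref{thm:dTVLoc}), and then show $|b_n|^{-1} d_{\mathrm{GH}}(\Loop(\mathcal{T}_n),C_n)\to 0$. The first assertion is read off exactly as you say.

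However, there is a genuine gap in how you propose to control $d_{\mathrm{GH}}(\Loop(\mathcal{T}_n),C_n)$. You bound it by the path-sum $\max_v \sum_{w}(c_w+1)$ over ancestral paths and then assert this is controlled by the \emph{oscillation} of the walk with the giant jump excised. That oscillation (which is indeed $O_{\P}(a_n)=o(|b_n|)$ once the residual walk is shown to behave unconditionally) controls only the \emph{{\L}ukasiewicz-path} component of the bound, i.e.\ the analogue of $\sup_i\W_i(\tau_k)$ for the subtrees $\tau_k$ grafted around the spine. It does \emph{not} control the $\sum_w 1 = \mathbf{H}(\tau_k)$ component — the heights of the grafted subtrees. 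In the Cauchy regime the height is precisely what escapes control through the walk: there is no nontrivial scaling limit for the contour/height process, and a $\BGW_\mu$ subtree can have height much larger than the supremum of its {\L}ukasiewicz path. The paper closes this gap with a separate tail bound, \cref{lem:HeightBGW}, showing $n\Pr{\mathbf{H}(\mathcal{T})\ge n}\to 0$ by a domination argument comparing $G_\mu$ with the generating function of a $\beta$-stable offspring law, $\beta\in(1,2)$, and invoking Slack's estimate; combined with a union bound over the $\simeq 2|b_n|$ grafted subtrees, this makes $\sup_k \mathbf{H}(\tau_k)=o_{\P}(|b_n|)$. Without such an input, the height of the decorations attached to $C_n$ is uncontrolled and the GH convergence does not follow.

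In short: your decomposition and use of the one-big-jump principle are correct and agree with the paper, but you need an additional, independent argument bounding the heights of the $\BGW_\mu$ subtrees (the role of \cref{lem:HeightBGW}); the excised-walk oscillation alone is not enough.
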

 
Therefore, condensation occurs at scale $|b_{n}|$ in $\Tn$. Observe that $|b_{n}|=o(n)$, in sharp contrast with the subcritical case where condensation occurs at scale $n$.  Moreover, since $a_{n}=o(|b_{n}|)$, $\Delta( \mathcal{T}_{  n})/|b_{n}| \rightarrow 1$ in probability (but the above result also gives the fluctuations of $\Delta( \mathcal{T}_{  n})$ around~$|b_{n}|$).

In order to establish \cref{thm:loop-local}, we will use the coding of $ \mathcal{T}_{n}$  by its {\L}ukasiewicz path, which is a centered random walk conditioned on a fixed entrance time in the negative real line. We show that, asymptotically, this conditioned random walk is well approximated in total variation by a simple explicit random trajectory $\Zn$ (\cref{thm:dTVLoc}). To this end, we adapt arguments of Armend\'ariz \& Loulakis \cite{AL11}. The process $\Zn$ is then constructed by relying on a path transformation of a random walk (the Vervaat transform, see  \cref{sec:excursioncond} for details).

This approximation has several interesting consequences. First, it allows us to establish that a ``one big jump'' principle occurs for $Z^{(n)}$ (\cref{thm:RW-local}) and for the {\L}ukasiewicz path of $\mathcal{T}_{n}$ (\cref{prop:luka-local}), which is a key step to prove \cref{thm:loop-local}. Second, it allows to determine the order of magnitude of the height $H^{\ast}_{n}$ of the vertex with maximal degree in $ \mathcal{T}_{n}$ (that is, its graph distance to the root vertex).

\begin{theorem}
  \label{thm:height-local}There is a slowly varying function $\Lambda$ with $\Lambda(x)\rightarrow \infty$ as $x\rightarrow \infty$ such that
  \[ \frac{H^{\ast}_{  n}}{\Lambda(n)}  \quad \xrightarrow[n\to\infty]{(d)} \quad \mathsf{Exp}(1),\]
where $\mathsf{Exp}(1)$ is an exponential random variable with parameter 1.
   \end{theorem}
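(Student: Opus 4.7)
The height $H^*_n$ can be read off the Łukasiewicz path $W^{(n)}$ of $\mathcal{T}_n$. By \cref{prop:luka-local}, with probability tending to $1$ the largest jump of $W^{(n)}$ occurs at a unique step $k^*_n$, whose corresponding vertex is the maximal-degree one. Its height is the number of weak future-minimum positions of the walk before $k^*_n$:
\[
H^*_n \;=\; \#\bigl\{\,0 \leq j < k^*_n \,:\, W^{(n)}_j = \min_{j \leq i \leq k^*_n} W^{(n)}_i\,\bigr\},
\]
because these positions correspond exactly to the ancestors of the maximal-degree vertex in depth-first order. The first step is therefore to invoke \cref{thm:dTVLoc} and transfer the problem to $Z^{(n)}$, at negligible total-variation cost.

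Next, I exploit the fact that the pre-jump portion of $Z^{(n)}$ is an unconditioned centered random walk of length $k^*_n$ whose step law is close to that of $X$ (suitably truncated at scale $a_n$). Time-reversing the i.i.d.\ increments of this walk, the number of future-minimum positions becomes (in law) the number $\tilde{N}_{k^*_n}$ of weak ascending ladder times, up to time $k^*_n$, of an unconditioned random walk $\tilde W$ with the same step distribution. Spitzer's identity gives
\[
\mathbb{E}[\tilde{N}_k] \;=\; \sum_{j=1}^{k} \frac{\mathbb{P}(\tilde W_j \geq 0)}{j} \;+\; O(1),
\]
and combining the Cauchy convergence $(\tilde W_j - b_j)/a_j \Rightarrow \mathcal{C}_1$ with the Cauchy tail $\mathbb{P}(\mathcal{C}_1 > t) \sim c/t$ yields $\mathbb{P}(\tilde W_j \geq 0) \sim c\,a_j/|b_j|$. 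The natural candidate for the scaling function is then
\[
\Lambda(n) \;:=\; \sum_{j=1}^{n} \frac{\mathbb{P}(\tilde W_j \geq 0)}{j},
\]
which one verifies to be slowly varying via Karamata's theorem, with $\Lambda(n) \to \infty$ (in the example $\mu(n) \sim c/(n^2 \ln(n)^2)$, one computes $a_j/|b_j| \sim 1/\log j$ and $\Lambda(n) \sim c\log\log n$).

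The final and most delicate step is to derive the exponential limit. Sparre--Andersen yields, in this regime, a slowly-varying tail $\mathbb{P}(\tau^+ > n) \sim \mathrm{const}/\Lambda(n)$ for the weak ascending ladder time $\tau^+$ of $\tilde W$. A spinal picture then suggests the convergence $\tilde{N}_n/\Lambda(n) \to \mathsf{Exp}(1)$: the ancestors of the maximal-degree vertex can be constructed one at a time, and each spine step has asymptotic probability $\sim 1/\Lambda(n)$ of being terminal (i.e., of reaching the max-degree vertex itself), so the spine length is asymptotically geometric with parameter $1/\Lambda(n)$, hence exponential after rescaling. The main obstacle lies precisely in making this renewal / spinal argument rigorous through the explicit structure of $Z^{(n)}$: one must control the joint distribution of $k^*_n$ (which is of order $n$ with positive probability) and of the ladder count, while exploiting slow variation to show that the replacement $\Lambda(k^*_n) \leftrightarrow \Lambda(n)$ is harmless. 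The exponential limit is then an essentially geometric-to-exponential rescaling, but the slowly-varying nature of $\Lambda$ places us at the borderline of classical renewal theory, which is where the bulk of the technical work will be required.
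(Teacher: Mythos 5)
There is a genuine gap, and it is quantitative: your candidate scaling function is wrong by an exponential. You set $\Lambda(n) = \sum_{j=1}^n \P(W_j \geq 0)/j$, asserting both that $\mathbb{E}[\tilde N_n]$ equals this sum (via ``Spitzer's identity'') and that Sparre--Andersen gives $\P(\tau^+>n)\sim \mathrm{const}/\Lambda(n)$ with this same $\Lambda$. Neither claim is correct. By time reversal, the expected number of weak ascending ladder epochs up to time $n$ is $\mathbb{E}[H_n]=\sum_{k=0}^n \P(\zeta>k)$, not $\sum_{j\le n}\P(W_j\ge0)/j$. And the Wiener--Hopf/Sparre--Andersen identity reads $1-\mathbb{E}[s^{T_1}]=\exp\bigl(-\sum_{k\ge1}\P(W_k\ge0)s^k/k\bigr)$, so that $\log(1/\P(T_1>n))$ — not $1/\P(T_1>n)$ — is asymptotic to $\sum_{j\le n}\P(W_j\ge0)/j$. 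The paper's normalization is $\Lambda(1/(1-s))=\exp\bigl(\sum_{k\ge1}\P(W_k\ge0)s^k/k\bigr)$, whence $\P(T_1>n)\sim1/\Lambda(n)$ (\cref{prop:laddertimes}); your proposed $\Lambda$ is essentially $\log$ of this. In the test example $\mu(n)\sim c/(n^2\ln^2 n)$, the correct normalization is $\Lambda(n)\sim\ln(n)/c^2$ (\cref{lem:estimateslocal}, \cref{rem:constants}), whereas you compute $\sim c\log\log n$. So the stated limit theorem would fail with your choice of $\Lambda$.

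You also miss the key structural simplification that makes the argument clean. After the Vervaat transform and time reversal (\cref{cor:timereversal}), on the high-probability event $\mathcal{E}_n$ the big jump of $Z^{(n)}$ occurs at position $\widehat{I}_{n-1}$, the \emph{last} weak ladder epoch of the reversed unconditioned walk $\widehat{W}$ on $\{0,\dots,n-1\}$. Counting future minima of $Z^{(n)}$ up to the big jump is therefore exactly counting \emph{all} weak ladder epochs $H_{n-1}$ of the unconditioned walk up to time $n-1$ — no separate control of the jump time $k^*_n$ is needed, and the worry about replacing $\Lambda(k^*_n)$ by $\Lambda(n)$ evaporates. (Incidentally, $k^*_n$ is not ``of order $n$ with positive probability'': $U^*_n/n\to0$ in probability, see \cref{cor:U-local}.) The exponential limit then follows directly from Darling's limit theorem for ladder epochs when the ladder-time tail is slowly varying (\cref{prop:numberladdertimes}), which is precisely the $\P(T_1>n)\cdot H_n\Rightarrow\mathsf{Exp}(1)$ statement with the correct $\Lambda$. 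Your ``geometric-to-exponential'' spinal heuristic points in the right direction but would not recover the correct rate as stated; the Darling argument is what makes it rigorous.
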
 In the particular case $\mu(n) \sim \tfrac{c}{n^{2} \ln(n)^{2}}$, one can take $\Lambda(n)= \tfrac{\ln(n)}{c^{2}}$ (see \cref{lem:estimateslocal} and \cref{rem:constants}). In contrast with the subcritical  case, where the height of the vertex of maximal degree converges in distribution to a geometric random variable \cite[Theorem 2]{Kor15}, here $H^{\ast}_{n} \rightarrow \infty$ in probability. This is consistent with the fact that in the subcritical case, the local limit is a tree with one vertex of infinite degree, while in the critical case $\mathcal{T}_{n}$ converges in distribution for the local topology to a locally finite tree. Roughly speaking, vertices with large degrees in  $\mathcal{T}_{n}$ ``escape to infinity'' as $n \rightarrow \infty$.

\smallskip

Intuitively speaking, the approximation given by \cref{thm:dTVLoc} implies that the tree $ \mathcal{T}_{n}$ may be seen as a ``spine'' of height $H^{\ast}_{n}$; to its left and right are grafted independent $\BGW_{\mu}$ trees, and on the top of the spine is grafted a forest of $ \simeq |b_{n}|$  $\BGW_{\mu}$ trees. In particular, this description implies that while the maximal degree of  $ \mathcal{T}_{n}$ is of order $|b_{n}|$,  the next largest degrees of $ \mathcal{T}_{n}$ are of order $a_{n}$ in the following sense.

   \begin{theorem}
  \label{thm:degrees-local}
Let $(\Delta ^{(i)}_{  n} : i \geq 0)$ be the degrees of $ \mathcal{T}_{  n}$ ordered in decreasing order. Then the convergence
\[ \left( \frac{\Delta^{(0)}_{  n} }{|b_{n}|},\frac{\Delta^{(1)}_{  n} }{a_{n}},\frac{\Delta^{(2)}_{  n} }{a_{n}}, \ldots \right)\quad \xrightarrow[n\to\infty]{(d)} \quad (1, \Delta^{(1)}, \Delta^{(2)}, \ldots)\]
holds in distribution for finite dimensional marginals, where $(\Delta^{(i)} : i \geq 1)$ is the decreasing rearrangement of the second coordinates of the atoms of a Poisson measure on $[0,1] \times \R_{+}$ with intensity $\textrm{d}t \otimes \frac{\textrm{d}x}{x^{2}}$.
   \end{theorem}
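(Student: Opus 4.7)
The plan is to derive \cref{thm:degrees-local} from the total-variation approximation of \cref{thm:dTVLoc}, which asserts that the {\L}ukasiewicz path of $\mathcal{T}_n$ is asymptotically close in total variation to the explicit random trajectory $\Zn$. Since the degrees of $\mathcal{T}_n$ are exactly one plus the jumps of its {\L}ukasiewicz path, and both the reordering operation and the shift by $1$ are continuous at any scale diverging to $\infty$, it suffices to establish the analogous limit for the ordered positive jumps of $\Zn$.

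By construction (see \cref{sec:excursioncond}), $\Zn$ is obtained by a Vervaat cyclic re-rooting of a random walk whose $n$ steps are one distinguished \emph{big} step of size of order $|b_n|$ inserted at a uniformly chosen position together with $n-1$ unconditioned i.i.d.\ copies $X_1,\ldots,X_{n-1}$ of $X$. The Vervaat transform is a cyclic shift, hence preserves the multiset of jump values, so that the decreasing rearrangement of the jumps of $\Zn$ is the same as that of the underlying decoupled walk. The distinguished big step is the maximum, and its contribution to the first coordinate of the limit is provided by \cref{thm:loop-local}, which yields $\Delta^{(0)}_n/|b_n| \to 1$ in probability; since $a_n = o(|b_n|)$, this step does not appear at scale $a_n$.

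For the remaining jumps, I would invoke the classical convergence of point processes for heavy-tailed i.i.d.\ samples. Under \eqref{eq:hypmu}, by definition \eqref{eq:defanbn} one has $n \P(X \geq a_n) \to 1$ and the positive tail of $X$ is regularly varying with index $-1$. It follows from standard arguments (see e.g.\ \cite{BGT89} and classical results on point-process convergence for regularly varying sequences) that the empirical measure
\[ \sum_{i=1}^{n-1} \delta_{(i/(n-1),\, X_i/a_n)} \]
converges vaguely on $[0,1] \times (0,\infty]$ to a Poisson point process with intensity $dt \otimes dx/x^2$. Applying the (almost surely continuous at PPP configurations) functional that returns the decreasing rearrangement of the second coordinates of the atoms yields the convergence of $(X_{(1)}/a_n, X_{(2)}/a_n, \ldots)$ towards $(\Delta^{(1)}, \Delta^{(2)}, \ldots)$ in finite-dimensional distributions, as required.

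The main technical point is to verify cleanly that the distinguished big step wins the race for the maximal jump with probability tending to $1$, so that $\Delta^{(1)}_n, \Delta^{(2)}_n, \ldots$ in $\mathcal{T}_n$ are really indexed by the top atoms of the limiting Poisson process rather than confused with the big step itself. This follows from $|b_n|/a_n \to \infty$ combined with tightness of $\max_{1 \leq i \leq n-1} X_i/a_n$, the latter being a byproduct of the point-process convergence above. Once this separation is in place, combining the total-variation bound of \cref{thm:dTVLoc}, the multiset invariance under the Vervaat transform, \cref{thm:loop-local} for the top coordinate, and the Poisson limit for the remaining coordinates, yields the announced joint convergence.
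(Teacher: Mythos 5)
Your argument is correct and reaches the stated conclusion, but it proceeds along a genuinely different route than the paper's. After the reduction to $\Zn$ via \cref{thm:dTVLoc}, the paper applies the functional convergence~\eqref{eq:cvfonctionnelle} to the portion of $\Zn$ after the big jump, transfers it to an almost-sure convergence via Skorokhod's representation theorem, extracts the sequence of ordered jumps by continuity of the relevant functionals for the Skorokhod $J_1$ topology, and then identifies the limit using the L\'evy--It\^o description of $\mathcal{C}$ (whose L\'evy measure is $\mathbbm{1}_{x>0}\,\mathrm{d}x/x^2$). You bypass this machinery entirely by observing that the multiset of jumps of $\Zn=\mathcal{V}(W_0,\ldots,W_{n-1},-1)$ is exactly $\{X_1,\ldots,X_{n-1},-1-W_{n-1}\}$, where the $X_i$ are genuinely unconditioned i.i.d.\ copies of $X$, so that once the single big jump is separated off, the remaining order statistics are handled by the classical extreme-value point-process limit $\sum_i\delta_{(i/(n-1),X_i/a_n)}\to\mathrm{PPP}(\mathrm{d}t\otimes\mathrm{d}x/x^2)$. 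This is more elementary than the paper's proof (no L\'evy-process apparatus, no Skorokhod continuity arguments) and is arguably the more transparent explanation of where the intensity $\mathrm{d}x/x^2$ really comes from; the paper's route has the virtue of staying uniform with the other proofs in Sections~\ref{sec:local}--\ref{sec:tail}, which all flow through the invariance principle.

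Two small imprecisions, neither fatal. First, you describe $\Zn$ as a Vervaat re-rooting of a walk ``whose $n$ steps are one distinguished big step \dots inserted at a uniformly chosen position together with $n-1$ unconditioned i.i.d.\ copies.'' This is not the definition: in $(W_0,\ldots,W_{n-1},-1)$ the big step $-1-W_{n-1}$ is always the \emph{last} increment and is a deterministic function of the others, not an independently inserted step at a uniform position. (You may be thinking of the shifted bridge $\Rn$ from the proof of \cref{thm:dTVLoc}, which is a different object.) Since only the \emph{multiset} of increments matters for the degree sequence and that multiset is indeed $\{X_1,\ldots,X_{n-1},-1-W_{n-1}\}$, this misdescription does not damage the argument, but the phrasing should be corrected. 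Second, invoking \cref{thm:loop-local} to get $\Delta^{(0)}_n/|b_n|\to 1$ is slightly indirect, since that theorem is itself a downstream consequence of \cref{thm:dTVLoc}; the cleaner move, which the paper makes, is to appeal directly to \cref{cor:timereversal} to put oneself on the event $\mathcal{E}_n$ where the big step is indeed the maximal jump, and to the one-dimensional convergence $-W_{n-1}/|b_n|\to 1$ from~\eqref{eq:cvfonctionnelle}.
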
  
   
This result shows that there are  many vertices with degrees of order $n$ up to a slowly varying function. However, the maximum degree of $ \mathcal{T}_{n}$ is at a different scale from the others since  $a_{n}=o(|b_{n}|)$. In particular,  $\Delta_{n}^{(0)}/\Delta_{n}^{(1)} \rightarrow \infty$ in probability as $n \rightarrow \infty$. This also answers negatively  Problem 19.30 in \cite{Jan12} (in the case $\lambda=\nu=1$ in the latter reference), as by taking $h(n)= \sqrt{a_{n} |b_{n}|}$, we have $n \P(X \geq h(n)) \rightarrow 0$, but it is not true that $\Delta_{n}^{(1)} \leq  h(n)$ with high probability.

\subsection{Tail conditioning}

At this point, the reader may wonder if the results of the previous section hold under the more general assumption \eqref{eq:hypmu}. In this case, it is not known if the required estimates on random walks are still valid. For this reason, analogous results may be obtained at the cost of relaxing the conditioning on the total number of vertices of the tree. Namely, as in \cite{KR18}, we now deal with $ \mathcal{T}_{ \geq n}$, a $\BGW_{\mu}$ tree conditioned on having \emph{at least} $n$ vertices, where $\mu$ is a fixed offspring distribution satisfying \eqref{eq:hypmu}. A motivation for studying this conditioning is the application to large faces in random planar maps given in the last section, where we merely know that the assumption \eqref{eq:hypmu} is satisfied. 

\smallskip

Similarly to the previous setting, we use the coding of $ \mathcal{T}_{ \geq n}$  by its {\L}ukasiewicz path, which is then a centered random walk conditioned on a \emph{late entrance} in the negative axis. We show that, asymptotically, this conditioned random walk is well approximated in total variation by a simple explicit random trajectory $\Zgn$ (\cref{thm:dTVtail}). To this end, we rely on the strategy developed in \cite{KR18} and we obtain a new asymptotic equivalence on the tails of ladder times of random walks (\cref{prop:laddertimes}), which improves recent results of Berger \cite{Ber17} and is of independent interest. Even though the global strategy is similar to the local case, we emphasize that $\Zgn$ is of very different nature than the one we introduce in the local conditioning.

As we shall see, this approximation has several interesting consequences. First, it allows to establish that a ``one big jump'' principle occurs for $\Zgn$ (\cref{thm:RW-local}) and for the {\L}ukasiewicz path of $\Tgn$ (\cref{prop:luka-local}). It is interesting to note that  similar ``one big jump'' principles have been established for random walks with \emph{negative drift} by Durrett \cite{Dur80} in the case of jump distributions with finite variance and in \cite{KR18} in the case of jump distributions in the domain of attraction of a stable law of index in $(1,2]$. However, we deal here with \emph{centered} random walks. Second, it yields a decomposition of the tree $\Tgn$ which is very similar in spirit to that of the tree $\Tn$, except that the maximal degree in the tree remains random in the scaling limit. As a consequence, we obtain the following analogues of Theorems \ref{thm:loop-local}, \ref{thm:height-local} and \ref{thm:degrees-local}. Once Theorem \ref{thm:dTVtail} is established, their proofs are simple adaptations of those in the local conditioning setting, and will be less detailed. We start with the existence of a condensation phenomenon.
 
\begin{theorem}
  \label{thm:loop-tail}
    Let  $J$ be the real-valued random variable such that $\Pr{J \geq x}= 1/x$ for $x \geq 1$. Assume that $\mu$ satisfies \eqref{eq:hypmu}. Then the convergence
   \begin{equation}
   \label{eq:loop-tail}\frac{1}{|b_{n}|} \cdot \Loop( \mathcal{T}_{ \geq n})  \quad \xrightarrow[n\to\infty]{(d)} \quad J \cdot \mathbb{S}_1
   \end{equation}
   holds in distribution
with respect to the Gromov--Hausdorff topology.
   \end{theorem}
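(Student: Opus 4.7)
The plan is to follow the template established for Theorem~\ref{thm:loop-local}: exploit the total variation approximation of Theorem~\ref{thm:dTVtail} to derive a one--big--jump principle for the \L{}ukasiewicz path of $\mathcal{T}_{\geq n}$, translate this into a single dominant degree in the tree, and conclude by a geometric argument on the looptree. More precisely, Theorem~\ref{thm:dTVtail} asserts that the \L{}ukasiewicz path $\Wn$ of $\mathcal{T}_{\geq n}$ is close in total variation to the explicit process $\Zgn$; combined with Theorem~\ref{thm:RW-local} and Proposition~\ref{prop:luka-local} in the tail setting, this shows that $\Wn$ possesses a single jump of size approximately $|b_n|\cdot J$ (with $\P(J\geq x)=1/x$ for $x\geq 1$), while all other jumps are of order at most $a_n=o(|b_n|)$. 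Through the standard bijection between plane trees and \L{}ukasiewicz paths, this yields a unique vertex $v_n^{\ast}\in\mathcal{T}_{\geq n}$ whose degree $\Delta_n^{\ast}$ satisfies $\Delta_n^{\ast}/|b_n|\to J$ in distribution, whereas the remaining degrees are of order $a_n$ (the analog of Theorem~\ref{thm:degrees-local}).

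Next I would control the subtrees hanging off the ``backbone'' of $\mathcal{T}_{\geq n}$. The approximation of Theorem~\ref{thm:dTVtail} also implies that $\mathcal{T}_{\geq n}$ decomposes (up to vanishing total variation error) into a spine of ancestors of $v_n^{\ast}$ of slowly varying height carrying independent unconditioned $\BGW_\mu$ subtrees on both sides, together with a forest of $\Delta_n^{\ast}$ independent $\BGW_\mu$ subtrees grafted at $v_n^{\ast}$ (one of them designated as containing the remaining mass required by the conditioning). Using standard tail bounds on the size and looptree diameter of an unconditioned $\BGW_\mu$ tree, I would show that the maximum of the looptree diameters over all these subtrees, as well as the height of the spine, is $o_\P(|b_n|)$.

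By definition of $\Loop$, the $\Delta_n^{\ast}$ children of $v_n^{\ast}$ together with $v_n^{\ast}$ itself form a discrete cycle $C_n^{\ast}$ of length $\Delta_n^{\ast}+1$, and deleting its edges from $\Loop(\mathcal{T}_{\geq n})$ disconnects the looptree into pieces that are precisely the looptrees of the subtrees listed above, each attached to a single vertex of $C_n^{\ast}$. The previous step therefore yields
\begin{equation*}
d_{\mathrm{GH}}\!\left(\tfrac{1}{|b_n|}\cdot\Loop(\mathcal{T}_{\geq n}),\; \tfrac{1}{|b_n|}\cdot C_n^{\ast}\right) \;=\; o_\P(1).
\end{equation*}
Since a discrete cycle of length $k$ rescaled by $1/k$ converges in the Gromov--Hausdorff sense to $\mathbb{S}_1$ and $\Delta_n^{\ast}/|b_n|\to J$ in distribution, Slutsky's lemma gives the desired convergence~\eqref{eq:loop-tail}.

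The main obstacle I expect is the uniform control in the second step: since there are of order $|b_n|\to\infty$ subtrees grafted at $v_n^{\ast}$, a naive maximum over them could a priori blow up despite the good scale of an individual tree. One has to combine sharp tail estimates on the looptree diameter of an unconditioned $\BGW_\mu$ tree (themselves reducing to estimates on the maximal jump of the associated centered random walk) with the joint law of the subtrees provided by Theorem~\ref{thm:dTVtail}, in order to verify that this maximal diameter remains of order $a_n$ up to slowly varying corrections, hence $o_\P(|b_n|)$.
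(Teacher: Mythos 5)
Your proposal follows essentially the same route as the paper: the paper states explicitly that Theorem~\ref{thm:loop-tail} ``is proved along the same lines as Theorem~\ref{thm:loop-local} and is simpler.'' You correctly identify the roadmap: total variation coupling via Theorem~\ref{thm:dTVtail}, extraction of the one-big-jump structure giving $\Delta^{(0)}_{\geq n}/|b_n|\to J$, decomposition of $\Loop(\mathcal{T}_{\geq n})$ into a cycle of length $\Delta^{(0)}_{\geq n}$ with looptrees of the remaining components grafted on it, control of the radii of these grafted pieces at scale $o_\P(|b_n|)$, and Gromov--Hausdorff convergence to a circle.

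One mischaracterization is worth flagging, precisely on the step you single out as the main obstacle. You describe the control of the grafted looptree radii as reducing to ``estimates on the maximal jump of the associated centered random walk'' and call the needed tail bounds ``standard.'' Neither is quite right. The paper bounds $\rad(\Loop(\tau))\leq \mathbf{H}(\tau)+\sup_i \W_i(\tau)$ (Lemma~11 of \cite{KR18}). The $\sup_i\W_i(\tau)$ contribution, summed over the grafted pieces, is indeed controlled globally by $2\sup_{0\leq i\leq n}(W_i-\underline{W}_i)$, which is $O_\P(a_n)=o_\P(|b_n|)$ by \eqref{eq:cvfonctionnelle}; this part does come from walk fluctuations. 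But the height contribution $\mathbf{H}(\tau)$ does \emph{not} reduce to maximal-jump statistics, and under \eqref{eq:hypmu} there is no off-the-shelf asymptotic for $\Pr{\mathbf{H}(\mathcal{T})\geq n}$ for an unconditioned $\BGW_\mu$ tree. The paper has to establish Lemma~\ref{lem:HeightBGW} specifically (by dominating the survival probabilities $Q_n$ by those of a BGW tree with $3/2$-stable offspring and invoking Slack's asymptotics), obtaining $n\,\Pr{\mathbf{H}(\mathcal{T})\geq n}\to 0$, which is exactly the strength needed for the union bound over the $\Theta(|b_n|)$ subtrees grafted at the high-degree vertex to vanish. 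Without this dedicated lemma, your sketch has a genuine gap at the place you yourself identify as critical. A smaller slip: the relevant invariance principle and Łukasiewicz-path statement in the tail setting are Theorem~\ref{thm:RW-tail} and Proposition~\ref{prop:luka-tail}, not their local counterparts.
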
 The height $H^{\ast}_{ \geq n}$ of the vertex with maximal degree in $ \mathcal{T}_{ \geq n}$ turns out to have the same order of magnitude as $H^{\ast}_{n}$.
 
   \begin{theorem}
  \label{thm:height-tail}
 Let $\Lambda$ be a slowly varying function such that the conclusion of \cref{thm:height-local} holds. Then
\[ \frac{H^{\ast}_{ \geq n}}{\Lambda(n)}  \quad \xrightarrow[n\to\infty]{(d)} \quad \mathsf{Exp}(1),\]
where $\mathsf{Exp}(1)$ is an exponential random variable with parameter 1.
   \end{theorem}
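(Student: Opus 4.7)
The strategy is to reproduce the argument behind \cref{thm:height-local}, substituting the local total variation approximation of \cref{thm:dTVLoc} by its tail counterpart \cref{thm:dTVtail}. The height of the vertex of maximal degree can be expressed as a functional of the Łukasiewicz path: if the (almost surely unique, by the ``one big jump'' principle) macroscopic increment of the path occurs at time $\tau^{\ast}$, then $H^{\ast}_{\geq n}$ equals the number of strict descending ladder indices of the path $(W^{(n)}_j)_{0 \leq j \leq \tau^{\ast}}$ viewed from the right endpoint, equivalently the number of strict ascending ladder indices of the time-reversal $(W^{(n)}_{\tau^{\ast}} - W^{(n)}_{\tau^{\ast}-i})_{0 \leq i \leq \tau^{\ast}}$. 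Since this quantity is a measurable functional of the path, \cref{thm:dTVtail} lets us replace $W^{(n)}$ by the explicit trajectory $\vec{Z}^{(n)}$ at the cost of a vanishing error in total variation.

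In $\vec{Z}^{(n)}$, the trajectory up to the big jump is (by construction) related to a centered random walk with step distribution $X$, run until it first reaches the threshold associated with the big jump, namely a level of order $|b_n| \cdot J$, where $J$ is the Pareto random variable of \cref{thm:loop-tail} with $\Pr{J \geq x} = 1/x$. Counting strict ascending ladder indices of the time-reversal is the same as counting strict descending ladder indices of the original walk up to its first passage below a level of order $-|b_n| J$. Thus the key input is the following estimate: for a centered random walk whose step distribution satisfies \eqref{eq:hypmu}, the number of strict descending ladder indices up to the first passage below level $-h$, normalized by the slowly varying function $\Lambda(h)$, converges in distribution to $\mathsf{Exp}(1)$. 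This is exactly the statement extracted from the proof of \cref{thm:height-local}, applied there with $h$ of order $|b_n|$.

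To finish, I would condition on $J$ and apply the above estimate with level $-|b_n| J$. Since $\Lambda$ is slowly varying while $|b_n|$ is regularly varying of index $1$, for any fixed $J \in (1,\infty)$ one has $\Lambda(|b_n| J)/\Lambda(n) \to 1$. The conditional limit of $H^{\ast}_{\geq n}/\Lambda(n)$ given $J$ is therefore $\mathsf{Exp}(1)$, and since this limit does not depend on $J$, integrating out $J$ still yields $\mathsf{Exp}(1)$.

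The main obstacle is extracting the exponential limit for the number of ladder indices in clean form from the analysis behind \cref{thm:height-local}, in a way that is uniform enough in the target level $h$ to accommodate the random height scale $|b_n| J$ produced by the tail conditioning. Once this uniformity is in place, the rest of the argument is essentially cosmetic: the decomposition of $\vec{Z}^{(n)}$ provided by \cref{thm:dTVtail} has the same ``spine $+$ big jump $+$ attached forest'' structure as in the local case, with the only difference being the random multiplicative factor $J$, which is absorbed by the slow variation of $\Lambda$.
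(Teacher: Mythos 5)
Your overall template — reduce to $\Zgn$ via Theorem~\ref{thm:dTVtail}, time-reverse the pre-jump segment, and count ladder indices — matches the paper's. However, you misdescribe the construction of $\Zgn$, and this leads you to introduce an apparatus (conditioning on $J$, an estimate "up to first passage below level $-|b_n|J$", uniformity in $h$) that is both unnecessary and not something the construction supports.

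Concretely: the pre-jump segment $(\Zgn_i : 0 \leq i \leq I_n)$ is, by construction, the walk $W$ conditioned to stay nonnegative for $I_n$ steps, where $I_n$ is (an independent copy of) the last weak ladder time of $(W_i : 0 \le i \le n)$. It is emphatically \emph{not} a walk run until its first passage below a level of order $-|b_n|J$; in fact $\sup_{[0,I_n]}|\Zgn|/|b_n| \to 0$ in probability, so the pre-jump trajectory never sees the scale $|b_n|$ at all. The Pareto variable $J$ governs the size of the big jump $\Ygn_{I_n+1}$ and the length of the \emph{post}-jump segment (hence the total tree size and the maximal degree), but it simply does not enter the height $H^{\ast}_{\geq n}$, which is determined entirely by the pre-jump segment. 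There is consequently no factor of $J$ to "absorb by slow variation of $\Lambda$."

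The correct mechanism, which the paper uses, is the time-reversal identity \eqref{eq:timereversal}: $(\Zgn_i : 0\le i\le I_n) \stackrel{(d)}{=} (W^{[I_n]}_i : 0\le i\le I_n)$ with $W^{[I_n]}_i = W_{I_n} - W_{I_n - i}$. Under this identity the count
\[
\#\left\{0 \le i < I_n : \Zgn_i = \min_{[i,I_n]} \Zgn \right\}
\]
becomes the number of weak ladder times $m \in (0, I_n]$ of the forward walk $W$. Since $I_n$ is by definition the \emph{last} weak ladder time of $(W_i : 0\le i\le n)$, this count equals the total number $H_n$ of weak ladder times up to time $n$, and Proposition~\ref{prop:numberladdertimes} gives directly $\P(T_1>n)\cdot H_n \to \mathsf{Exp}(1)$ with $\P(T_1>n)\sim 1/\Lambda(n)$. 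No conditioning on $J$ and no uniformity in a random target level are needed — the "main obstacle" you flag does not arise because the pre-jump segment has a fixed (random, but $J$-independent) horizon $I_n$, not a random hitting level.
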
  
   
 Finally, the distribution of the sequence of higher degrees in $\Tgn$ can also be studied.

       \begin{theorem}
  \label{thm:degrees-tail}
Let $(\Delta ^{(i)}_{ \geq n} : i \geq 0)$ be the degrees of $ \mathcal{T}_{ \geq n}$ ordered in decreasing order. Then the convergence
\[ \left( \frac{\Delta^{(0)}_{ \geq n} }{|b_{n}|},\frac{\Delta^{(1)}_{ \geq n} }{a_{n}},\frac{\Delta^{(2)}_{ \geq n} }{a_{n}}, \ldots \right) \quad \xrightarrow[n\to\infty]{(d)} \quad (\Delta^{(0)}, \Delta^{(1)}, \Delta ^{(2)}, \ldots)\]holds in distribution for finite dimensional marginals, where $\Delta^{(0)} \overset{(d)}{=} J$, and conditionally given $\Delta^{(0)}=a$, $(\Delta^{(i)} : i \geq 1)$ is the decreasing rearrangement of the second coordinates of the atoms of a Poisson measure on $[0,a] \times \R_{+}$ with intensity $\textrm{d}t \otimes \frac{\textrm{d}x}{x^{2}}$.
   \end{theorem}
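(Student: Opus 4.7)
The plan is to transfer the statement to the explicit trajectory $\Zgn$ via the total-variation approximation of \cref{thm:dTVtail}, and then analyze the ordered jumps of $\Zgn$. Since the out-degrees of $\Tgn$ are exactly one plus the jumps of its {\L}ukasiewicz path, and the additive $+1$ is negligible against the scalings $a_n, |b_n| \to \infty$, it suffices to prove the analogous statement for the ordered jumps of $\Zgn$.

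By the explicit construction of $\Zgn$ given in \cref{thm:dTVtail}, its jumps decompose into a single macroscopic jump of size $B_n$ inserted at some (random) position, together with $N_n - 1$ i.i.d.\ typical jumps distributed as $X$, where $N_n$ is the (random) length of the trajectory. A key input is the joint convergence
\[
\left(\frac{B_n}{|b_n|}, \frac{N_n}{n}\right) \quad \xrightarrow[n\to\infty]{(d)} \quad (J, J),
\]
with the \emph{same} limit variable $J$ (as in the theorem statement) on both coordinates: informally, the big jump must compensate the drift $|b_{N_n}|$ accumulated over the $N_n$ centered typical steps, and since $b$ is regularly varying of index $1$, one has $|b_{N_n}|/|b_n| \to J$. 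This is essentially the ``one big jump'' principle of \cref{thm:RW-local} applied to $\Zgn$.

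Conditionally on $N_n$, the remaining $N_n - 1$ jumps are i.i.d.\ copies of $X$, and under \eqref{eq:hypmu} the normalization \eqref{eq:defanbn} gives $\Pr{X \geq c\,a_n} \sim 1/(cn)$ for each fixed $c > 0$. The standard point-process convergence for i.i.d.\ samples with regularly varying tails then yields, jointly with $N_n/n \to J$,
\[
\sum_{i=1}^{N_n - 1} \delta_{X_i / a_n} \quad \xrightarrow[n\to\infty]{(d)} \quad \mathcal{N}_J,
\]
where, conditionally on $J = a$, $\mathcal{N}_a$ is a Poisson point measure on $\R_{+}$ of intensity $a \cdot \d x / x^2$ --- equivalently, the projection on the second coordinate of a Poisson measure on $[0, a] \times \R_{+}$ of intensity $\d t \otimes \d x / x^2$. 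Because $|b_n|/a_n \to \infty$, the macroscopic jump is asymptotically strictly larger than all typical jumps; reindexing in decreasing order, it contributes the first coordinate of the limit, while the ordered typical jumps provide the $\Delta^{(i)}$ for $i \geq 1$.

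The main obstacle I expect is the \emph{joint} convergence of $B_n/|b_n|$ together with the point measure of typical jumps, so that the limit Poisson measure lives on $[0, J] \times \R_{+}$ with \emph{the same} variable $J$ governing the big jump. Because $B_n$ and $N_n$ are coupled through the late-entrance conditioning built into $\Zgn$, this requires a careful use of that explicit construction --- in which, conditionally on $N_n$, the value and position of the macroscopic jump are independent of the typical jumps. Once this joint convergence is secured, transferring back to $\Tgn$ via the total-variation bound of \cref{thm:dTVtail} and sorting the jumps concludes the argument.
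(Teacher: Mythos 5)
There is a genuine gap in your description of the structure of $\Zgn$. You claim that its jumps decompose into one macroscopic jump of size $B_n$ ``together with $N_n-1$ i.i.d.\ typical jumps distributed as $X$'', but this is not what the construction in \cref{thm:dTVtail} gives. Conditionally on $I_n=j-1$, the \emph{first} $j-1$ increments of $\Zgn$ are the increments of a random walk conditioned to stay nonnegative for $j$ steps, not an unconditioned i.i.d.\ sample; only the increments \emph{after} the big jump are i.i.d.\ copies of $X$. Because of this, you cannot directly apply ``standard point-process convergence for i.i.d.\ samples'' to the totality of the typical jumps. To repair this, one must show separately that the pre-big-jump jumps are uniformly $o(a_n)$, so that they vanish from the ordered list in the limit. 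In the paper this is done via the time-reversal identity $(\Zgn_i : 0 \le i \le I_n) \overset{(d)}{=} (W^{[I_n]}_i : 0 \le i \le I_n)$ (Equation~\eqref{eq:timereversal}) combined with $I_n/n \to 0$ in probability from \cref{prop:In} and the functional convergence~\eqref{eq:cvfonctionnelle}; your proposal is silent on this point, and without it the argument does not go through.

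Once this is patched, your route does work and differs somewhat from the paper's. You propose to show, conditionally on $\zeta_n/n \to J$, that the empirical point measure of the post-big-jump increments rescaled by $a_n$ converges to a Poisson measure of intensity $J \, \d x / x^2$, using point-process convergence for i.i.d.\ samples with regularly varying tails. The paper instead reuses the already-established functional convergence of the centered post-big-jump walk to the Cauchy process $(\mathcal{C}_t)_{0 \le t \le J}$, and then reads off the Poisson structure of its jumps from the L\'evy measure. Both paths lead to the same Poisson limit; the i.i.d.\ point-process approach is arguably more elementary, but it requires its own joint-convergence bookkeeping (handling the fact that $\zeta_n$ is determined by the post-big-jump walk itself), and it still presupposes the negligibility of the pre-big-jump portion, which is where your write-up falls short.
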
  
   
\begin{remark} The results of this paper deal with the large scale geometry of $\BGW$ trees whose offspring distribution is in the domain  of attraction of a stable law with index $1$ and is \textit{critical}. However, in the \textit{subcritical} case, one can prove that a condensation phenomenon occurs at a scale which is the total size of the tree by a simple adaptation of the arguments developed in \cite{Kor15,KR18} by using \cite{Ber17}. Moreover, in the \emph{supercritical} case, the Brownian CRT appears as the scaling limit in virtue of the classical ``exponential tilting'' technique \cite{Ken75}.
\end{remark}

\paragraph*{Acknowledgments} We are grateful to Quentin Berger and Grégory Miermont for stimulating discussions.  We acknowledge  partial support from  Agence Nationale de la Recherche, grant number ANR-14-CE25-0014 (ANR GRAAL). I.K. acknowledges  partial support from the City of Paris,  grant “Emergences Paris 2013, Combinatoire à Paris”. Finally, we thank an anonymous referee for correcting an inaccuracy in the proof of Corollary 14 (i) and for many useful suggestions.

\tableofcontents

\section{Bienaymé--Galton--Watson trees}
\label{sec:trees}

\subsection{Plane trees}
\label{sssec:planetrees}

Let us define plane trees according to Neveu's formalism \cite{Nev86}. Let $\N = \{1, 2, \dots\}$ be the set of positive integers, and consider the set of labels $\U = \bigcup_{n \ge 0} \N^n$ (where by convention $\N^0 = \{\varnothing\}$). For every $v = (v_1, \dots, v_n) \in \U$, the length of $v$ is $|v| = n$. We endow $\U$ with the lexicographical order, denoted by $\prec$. 

Then, a (locally finite) \emph{plane tree} is a nonempty subset $\tau \subset \U$ satisfying the following conditions. First, $\varnothing \in \tau$ ($\varnothing$ is called the \textit{root vertex} of the tree). Second, if $v=(v_1,\ldots,v_n) \in \tau$ with $n \ge 1$, then $(v_1,\ldots,v_{n-1}) \in \tau$ ($(v_1,\ldots,v_{n-1})$ is called the \textit{parent} of $v$ in $\tau$). Finally, if $v \in \tau$, then there exists an integer $k_v(\tau) \ge 0$ such that $(v_1,\ldots,v_n,i) \in \tau$ if and only if $1 \le i \le k_v(\tau)$ ($k_v(\tau)$ is the \textit{number of children} of $v$ in $\tau$). The plane tree $\tau$ may be seen as a genealogical tree in which the individuals are the vertices $v\in\tau$.

Let us introduce some useful notation. For $v,w \in \tau$, we let  $\llbracket v, w \rrbracket$ be the vertices belonging to the shortest path from $v$ to $w$ in $\tau$. Accordingly, we use $\llbracket v, w \llbracket$ for the same set, excluding $w$. We also let $|\tau|$ be the total number of vertices (that is, the size) of the plane tree $\tau$. 

\subsection{Bienaymé--Galton--Watson trees and their codings}
\label{sssec:coding}
Let $\mu$ be a  probability measure on $\Z_{\geq 0}$, that we call the \emph{offspring distribution}. We assume that $\mu(0) > 0$ and $\mu(0)+\mu(1)<1$ in order to avoid trivial cases. We also make the fundamental assumption that $\mu$ is \textit{critical}, meaning that it has mean $m_{\mu}:=\sum_{i \geq 0} i \mu (i) = 1$. The Bienaymé--Galton--Watson ($\BGW$) measure with offspring distribution $\mu$ is the probability measure $\BGW_\mu$  on plane trees that is characterized by
\begin{equation}\label{eq:def_GW}
\BGW_\mu(\tau) = \prod_{u \in \tau} \mu(k_u(\tau))
\end{equation}
for every finite plane tree $\tau$ (see~\cite[Prop.~1.4]{LG05}). 

Let $\tau$ be a plane tree whose vertices listed in lexicographical order are $\varnothing=u_{0}\prec u_{1}\prec \cdots \prec u_{|\tau|-1}$.  The \textit{{\L}ukasiewicz path} $ \W(\tau)=( \W_n(\tau) : 0 \leq n < |\tau|)$ of $\tau$ is the path defined by $ \W_0(\tau)=0$, and $ \W_{n+1}(\tau)= \W_{n}(\tau)+k_{u_n}(\tau)-1$ for every $0 \leq n < |\tau|$.
For technical reasons, we let $\W_n(\tau)=0$ for $ n> |\tau|$ or $n<0$.

The following result relates the {\L}ukasiewicz path of a $\BGW$ tree to a random walk (see \cite[Proposition 1.5]{LG05} for a proof). Let $(Y_{i} : i \geq 1)$ be a sequence of i.i.d.~real valued random variables with law given by $\P(Y_{1}=i)=\mu(i+1)$ for $i \geq -1$ and set  $\zeta= \inf \{i \geq 1 : {Y}_{1}+Y_{2}+ \cdots+Y_{i}<0\}$.

\begin{proposition}
\label{prop:GWRW}
Let $ \mathcal{T}$ be a tree with law $\BGW_{\mu}$. Then
\[(\W_{0}( \mathcal{T}), \W_{1}( \mathcal{T}), \ldots, \W_{| \mathcal{T}| }( \mathcal{T}))  \quad \mathop{=}^{(d)} \quad ({Y}_{0},{Y}_{1}, \ldots, Y_{1}+Y_{2}+ \cdots+{Y}_{\zeta}).\]
\end{proposition}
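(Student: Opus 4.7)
The plan is to verify the identity by matching probability mass functions, exploiting the classical bijection between finite plane trees and lattice paths encoded by the {\L}ukasiewicz walk. First I would recall that $\tau \mapsto \W(\tau)$ is a bijection from the set of plane trees with $n$ vertices onto the set $\Pi_n$ of integer sequences $(s_0, s_1, \ldots, s_n)$ satisfying $s_0 = 0$, $s_n = -1$, $s_{i+1} - s_i \geq -1$ for every $0 \leq i \leq n-1$, and $s_i \geq 0$ for every $0 \leq i \leq n-1$; under this bijection the $i$-th step equals $k_{u_i}(\tau) - 1$, where $u_i$ is the $i$-th vertex of $\tau$ in lexicographic order. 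The inverse is constructed by depth-first reconstruction: one reads off the child counts $k_{u_i} = s_{i+1} - s_i + 1$ from the path and rebuilds $\tau$ accordingly.

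Next, fix a path $(s_0, \ldots, s_n) \in \Pi_n$ and let $\tau$ be the corresponding plane tree. On the tree side, the product formula \eqref{eq:def_GW} together with the enumeration of the vertices of $\tau$ by $u_0, \ldots, u_{n-1}$ gives
\[ \BGW_\mu(\tau) = \prod_{i=0}^{n-1} \mu(k_{u_i}(\tau)) = \prod_{i=0}^{n-1} \mu(s_{i+1} - s_i + 1). \]
On the random-walk side, writing $S_j = Y_1 + \cdots + Y_j$, the independence of the $(Y_j)$ together with $\P(Y_1 = j) = \mu(j+1)$ yields
\[ \P\bigl(S_1 = s_1, \ldots, S_n = s_n\bigr) = \prod_{i=0}^{n-1} \mu(s_{i+1} - s_i + 1). \]
Moreover, on this event one has automatically $\zeta = n$, since the defining conditions of $\Pi_n$ state that $(s_i)$ stays nonnegative until time $n-1$ and first enters the negative integers at time $n$. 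Hence the two probabilities agree path-by-path, which establishes the claimed distributional identity.

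The statement is essentially a bookkeeping identity, which is why the paper simply cites \cite{LG05} for a detailed argument; no genuine obstacle arises. The only point worth flagging is that the first-passage constraint $\{\zeta = n\}$ on the random-walk side is transparently encoded by the conditions defining $\Pi_n$, reflecting the fact that depth-first exploration of $\tau$ exhausts the tree precisely after visiting all of its $n$ vertices, at which point the {\L}ukasiewicz walk drops to $-1$.
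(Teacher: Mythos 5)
Your proposal is correct and reproduces the standard argument from the reference the paper cites (\cite[Proposition 1.5]{LG05}): the bijection $\tau\mapsto\W(\tau)$ between finite plane trees and downward–skip-free lattice paths in $\Pi_n$, followed by a term-by-term identification of $\BGW_\mu(\tau)=\prod_i\mu(k_{u_i}(\tau))$ with the corresponding random-walk path probability, and the observation that the constraints defining $\Pi_n$ force $\zeta=n$. The only point worth making explicit is that this path-by-path matching identifies the two laws on the set of finite outcomes, and one concludes because both sides are a.s.\ finite (the tree is a.s.\ finite since $\mu$ is critical with $\mu(0)>0$, equivalently $\zeta<\infty$ a.s.\ since the walk is centered and nondegenerate) — but this is a minor bookkeeping remark, not a real gap.
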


\subsection{Tail bound for the height of a critical Cauchy BGW tree}

The following preliminary lemma gives a rough estimate on the tail of the height of a $\BGW_\mu$ tree  when  $\mu$ satisfies \eqref{eq:hypmu}. Its proof may be skipped in a first reading.
 
 For every plane tree $\tau$, we let $\mathbf{H}(\tau):= \sup\{ \vert v \vert : v \in \tau\}$ be its total height.

\begin{lemma}\label{lem:HeightBGW}
	Under \eqref{eq:hypmu}, if $ \mathcal{T} $ is a $\BGW_\mu$ tree, we have $n\Pr{\mathbf{H}( \mathcal{T} ) \geq n} \rightarrow 0$ as $n \rightarrow \infty$.
\end{lemma}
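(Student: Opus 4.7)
The plan is to use the standard generating-function recursion for the tail of $\H(\mathcal{T})$ and to combine it with a slowly varying lower bound on its ``drift'' coming from \eqref{eq:hypmu}. Writing $g(s) = \sum_k \mu(k) s^k$ and $f_n := \Pr{\H(\mathcal{T}) \geq n}$, the branching property gives $f_{n+1} = 1 - g(1-f_n)$, and $f_n \to 0$ since $\mu$ is critical and nondegenerate. Using $\sum_k \mu(k) = \sum_k k\mu(k) = 1$, the drift
\[ \psi(s) \;:=\; g(1-s) - (1-s) \;=\; \sum_{k \geq 2} \mu(k)\bigl[(1-s)^k - 1 + ks\bigr] \]
is nonnegative termwise, and the recursion reads $f_{n+1} = f_n - \psi(f_n)$.

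The key step will be a one-sided lower bound on $\psi$ in terms of the tail of $\mu$. For every $k$ with $ks \geq 2$, the elementary inequality $(1-s)^k - 1 + ks \geq ks - 1 \geq ks/2$ gives
\[ \psi(s) \;\geq\; \frac{s}{2}\, m(\lceil 2/s\rceil), \qquad m(N) := \sum_{k \geq N} k \mu(k). \]
An Abel summation rewrites $m(N) = N\mu([N,\infty)) + \sum_{k > N} \mu([k,\infty))$, and under \eqref{eq:hypmu} each term is slowly varying and tends to $0$, so that $m$ is a positive, decreasing, slowly varying function going to $0$. By Potter's bound, $N^\delta m(N) \to \infty$ for every $\delta > 0$.

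Plugging this bound into the recursion and taking logarithms, the monotonicity in $k$ of $f_k$ and of $m(\lceil 2/f_k\rceil)$ yield, after telescoping,
\[ \log(1/f_n) \;\geq\; \frac{1}{2} \sum_{k=0}^{n-1} m(\lceil 2/f_k\rceil) \;\geq\; \frac{n}{2}\, m(\lceil 2/f_n\rceil). \]
To conclude I would argue by contradiction: if $f_{n_j} \geq \epsilon/n_j$ along a subsequence, then $\lceil 2/f_{n_j}\rceil \leq \lceil 2n_j/\epsilon\rceil$ and slow variation of $m$ gives $\log(n_j/\epsilon) \geq (n_j/2)\, m(n_j)(1+o(1))$, which contradicts $n_j m(n_j) \gg \log n_j$ (from Potter).

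The main obstacle is producing a lower bound on $\psi$ strong enough to beat $1/n$: a finite-variance analysis would give only $\psi(s) \sim \tfrac{1}{2}\sigma^2 s^2$ and $f_n \sim 2/(\sigma^2 n)$, which is too weak (and anyway unavailable here, as $g''(1) = \infty$). The slowly varying lower bound $\psi(s) \geq \tfrac{s}{2}\, m(\lceil 2/s\rceil)$, combined with Potter's estimate, is precisely what forces $f_n$ to decay strictly faster than $1/n$.
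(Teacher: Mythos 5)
Your proof is correct, and it takes a genuinely different route from the paper. Both arguments start from the recursion $f_{n+1}=1-g(1-f_n)$, but the paper proceeds by \emph{comparison with a reference offspring distribution}: it rewrites $G_\mu(s)=s+(1-s)h(1-s)$ with $h$ slowly varying at $0$ (via Karamata's Abelian theorem), introduces $\rho$ with $G_\rho(s)=s+\tfrac12(1-s)^{3/2}$, uses Potter's bound to show the two one-step maps satisfy $f\leq\widehat f$ near $0$, and then imports Slack's asymptotic $\widehat Q_n\sim C/n^2$ to conclude. You instead derive a direct lower bound on the drift $\psi(s)=g(1-s)-(1-s)\geq \tfrac{s}{2}\,m(\lceil 2/s\rceil)$ by the elementary observation $(1-s)^k-1+ks\geq ks-1\geq ks/2$ when $ks\geq2$, identify $m(N)=N\mu([N,\infty))+\sum_{k>N}\mu([k,\infty))\sim\ell^*(N)$ as a slowly varying function tending to $0$, telescope $\log(1/f_n)\geq\tfrac{n}{2}m(\lceil2/f_n\rceil)$, and finish by contradiction using $N^\delta m(N)\to\infty$. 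Your approach is more self-contained (no appeal to Slack's theorem or an auxiliary law $\rho$) and makes the mechanism visible: the slowly varying truncated second moment forces $f_n$ to decay strictly faster than $1/n$. The paper's route is a bit shorter given Slack's result as a black box, and it in fact yields the stronger bound $Q_n=O(n^{-2})$, whereas your contradiction argument gives precisely $nf_n\to0$ — which is all the lemma needs. Both proofs rely on Potter's bound, but at different points: the paper to compare generating functions, you to beat the logarithm.
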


\begin{proof} The idea of the proof is to dominate $\Pr{\mathbf{H}(\mathcal{T}) \geq n} $ by a similar quantity for an offspring distribution that is critical and belongs to the domain of attraction of a stable law with index strictly between $1$ and $2$. Indeed, in that case, estimates for the height of the tree are known by \cite{Sla68}.

Set $Q_n \coloneqq \Pr{\mathbf{H}(\mathcal{T})\geq n}$ for every $n\geq 0$. By conditioning with respect to the degree of the root vertex, we get that $(Q_n : n\geq 0)$ is the solution of the equation
\begin{equation}\label{eqn:recurrence}
\left\lbrace
\begin{array}{l}
Q_0= {1} \\
1-Q_{n+1}=G_\mu(1-Q_n), \qquad n \in \Z_{\geq 0} \\
\end{array}\right.,
\end{equation} where $G_\mu(s)=\sum_{i=0}^{\infty}\mu(i)s^{i}$ stands for the generating function of $\mu$. Observe that
	\[\sum_{k=n}^\infty k\mu(k)=\sum_{k=n}^\infty\mu([k,\infty))  + n \mu([n,\infty)),\]
	and recall that $\mu([n,\infty)) \sim L(n)/n$ with $L$ slowly varying. Setting $\ell^*(n)\coloneqq\sum_{k=n}^\infty \frac{L(k)}{k}$, we have $\ell^*(n)/L(n)\rightarrow 0$ when $n \rightarrow \infty$ by \cite[Proposition 1.5.9a]{BGT89}. We can then apply Karamata's Abelian theorem \cite[Theorem 8.1.6]{BGT89} to write 
	 \[G_\mu(s)=s+(1-s) h(1-s), \qquad s \in [0,1)\]
	 with $h$ slowly varying at $0$ and $h(x) \sim \ell^*({1/x})$ as $x \rightarrow 0$.  Thus,  \eqref{eqn:recurrence} may be rewritten as $Q_{n+1}=Q_n(1-h(Q_n))$ for $n\geq 0$. {We now let $\rho$ be an offspring distribution whose generating function is given by $G_\rho(s)=s+\frac{1}{2}(1-s)^{3/2}$. (We could define $\rho$ with any exponent $\beta \in (1,2)$ instead of $3/2$, but this will suffice for our purpose).}
	
	 If $\widehat{Q}_n$ denotes the probability that a $\BGW_\rho$ tree has height at least $n$, we similarly have $\widehat{Q}_{n+1}=\widehat{Q}_n(1- \frac{1}{2}\widehat{Q}_n^{1/2})$ for every $n\geq 0$. We introduce the functions
	\[f(x)=x(1-h(x)) \quad \text{and} \quad \widehat{f}(x)=x \left( 1- \frac{1}{2}x^{1/2}\right), \qquad  x \in [0,1].\] But since  $f(x)=1-G_{\mu}(1-x)$, $f$ is increasing. Moreover, $h$ is slowly varying at $0$ so by Potter's bound (see e.g.~\cite[Theorem 1.5.6]{BGT89}), there exists $A>0$ such that 
	\begin{equation}\label{eqn:boundFunc}
		f(x) \leq \widehat{f}(x), \qquad \forall\ x\in [0,A].
	\end{equation} Moreover, since $\widehat{Q}_n$ is decreasing and vanishes when $n$ goes to infinity, there exists $\widehat{N}\geq 1$ such that $\widehat{Q}_n < A$ for every $n \geq \widehat{N}$. Similarly, there exists $N\geq 1$ such that $Q_n\leq \widehat{Q}_{\widehat{N}}$ for every $n\geq N$ because $\widehat{Q}_{\widehat{N}}>0$. We now claim that 
	\[Q_{N+n}\leq \widehat{Q}_{\widehat{N}+n}, \qquad \text{ for every } n \in \Z_{\geq 0}.\] Let us prove this assertion by induction. The claim is clear for $n=0$, and then we have for $n\geq 0$
	\[Q_{N+n+1}=f\left(Q_{N+n}\right)\leq f\left(\widehat{Q}_{\widehat{N}+n}\right)\leq  \widehat{f}\left(\widehat{Q}_{\widehat{N}+n}\right) =  \widehat{Q}_{\widehat{N}+n+1},\] where we used the fact that $Q_{N+n}\leq \widehat{Q}_{\widehat{N}+n}\leq A$, that $f$ is increasing as well as \eqref{eqn:boundFunc}. By \cite[Lemma 2]{Sla68},
	\[\widehat{Q}_{n}  \quad \mathop{\sim}_{n \rightarrow \infty} \quad \frac{C}{n^{2}}\]
	for a certain constant $C>0$. This implies that $n \cdot Q_{n} \rightarrow 0$, and completes the proof. \end{proof}

\begin{remark}
By adjusting the choice of $\rho$ in the previous proof, it is a simple matter to show that for every $c>0$,  $n^{c} \cdot \Pr{\mathbf{H}( \mathcal{T} ) \geq n} \rightarrow 0$.  When $\ell^{*}(x)=o(1/\ln(x))$,  \cite{Sze76} gives an asymptotic equivalent for $\Pr{\mathbf{H}( \mathcal{T} )  \geq n}$ (see also \cite{VW07}). However, in general, there is no known asymptotic equivalent for $\Pr{\mathbf{H}( \mathcal{T} ) \geq n} $. 
\end{remark}

\section{Estimates for Cauchy random walks}\label{sec:Estimates}

The strategy of this paper is based on the study of $\BGW$ trees conditioned to survive \emph{via} their {\L}ukasiewicz paths. The statement of Proposition \ref{prop:GWRW} entails that under \eqref{eq:hypmu}, the {\L}ukasiewicz path of a $\BGW_\mu$ tree is a (killed) random walk on $\Z$ whose increment $X$ satisfies the following assumptions:
\begin{center}
\uwave{\hspace{\linewidth}}
\begin{equation}
   \label{eq:hypX}
  \Es{X}=0, \qquad \Pr{X \geq x} \quad \mathop{\sim}_{x \rightarrow \infty} \quad \frac{L(x)}{x} \qquad \text{and} \qquad  \Pr{X < -1}=0.
  \tag{$\textrm{H}_X$}
   \end{equation}
   \uwave{\hspace{\linewidth}}
\end{center}
\medskip

Here, we recall that $L$ is a slowly varying function. Then, we let $(X_i : i\geq 1)$ be a sequence of i.i.d.\ random variables distributed as $X$. We put $W_{0}=0$, $W_{n}=X_{1}+\cdots+X_{n}$ for every $n \geq 1$ and also let $W_{i}=0$ for $i<0$ by convention.

The goal of this section is to derive estimates (that are of independent interest) on the random walk $W$, that will be the key ingredients in the proofs of our main results.

\subsection{Entrance time and weak ladder times}
\label{sec:EstimatesGeneralSetting}

Recall that $(a_{n} : n \geq 1)$ and $(b_{n}: n \geq 1)$ are sequences such that 
 \begin{equation*}
 n \P(X \geq a_{n})\quad \xrightarrow[n\to\infty]{} \quad 1 \qquad \text{and} \qquad b_{n}=n \Es{X \mathbbm{1}_{|X| \leq a_{n}}}
 \end{equation*} and that
\begin{equation*}
\frac{W_{n}-b_{n}}{a_{n}}  \quad \xrightarrow[n\to\infty]{(d)} \quad  \mathcal{C}_{1},
\end{equation*} in distribution, where $ \mathcal{C}_{1}$ is an asymmetric Cauchy variable with skewness 1. 

One can express an asymptotic equivalent of $b_{n}$ in terms of $L$. Indeed, let $\ell^{*}$ be the function \[\ell^{*}(n)\coloneqq \sum_{k= n}^{\infty} \frac{L(k)}{k}, \quad n \in \N.\] By \cite[Proposition 1.5.9a]{BGT89}, $\ell^{*}$ is slowly varying, and as $n \rightarrow \infty$, $\ell^{*}(n) \rightarrow 0$ and $\ell^{*}(n)/L(n) \rightarrow \infty$. Then (see \cite[Lemma 7.3 \& Lemma 4.3]{Ber17}) we have
\begin{equation}
\label{eq:bn}b_{n} \quad \mathop{\sim}_{n \rightarrow \infty} \quad -n \ell^{*}(a_{n}) \quad \mathop{\sim}_{n \rightarrow \infty} \quad - n \ell^{*}(|b_{n}|).
\end{equation}
 It is  important to note that
\[b_{n}  \quad \xrightarrow[n\to\infty]{}  \quad  -\infty.\]

The above one-dimensional convergence can be improved to a functional convergence as follows (by e.g.~\cite[Theorem 16.14]{Kal02}). If $\D(\R_{+},\R)$ denotes the space of real-valued càdlàg functions on $\R_{+}$ equipped with the Skorokhod $J_{1}$ topology (see Chapter VI in \cite{JS03} for background), the convergence
\begin{equation}
\label{eq:cvfonctionnelle} \left( \frac{W_{\lfloor nt \rfloor}- b_{n} t}{a_{n}} : t \geq 0 \right)  \quad \xrightarrow[n\to\infty]{(d)}  \quad (\mathcal{C}_{t} : t \geq 0)
\end{equation} holds in distribution in $\D(\R_{+},\R)$,
where $\mathcal{C}$ is a totally asymmetric Cauchy process characterized by $\Es{e^{-\lambda \mathcal{C}_{1} } }=e^{\lambda \ln(\lambda)}$ for $\lambda>0$.

\medskip

The first quantity of interest is the distribution of the first entrance time $\zeta$ of the random walk $W$ into the negative half-line,
\[\zeta= \inf \{i \geq 1 : W_{i}<0\}.\] Then, we will consider the sequence $(T_{i}: i \geq 0)$ of (weak) ladder times of $(W_{n}: n \geq 1)$. That is, $T_{0}=0$, and, for $i \geq 1$,
\[T_{i+1}=\inf \{j > T_{i}: W_{j} \geq W_{T_{i}}\}.\] 
We say that $j \geq 0$ is a weak ladder time of $W$ if there exists $i \geq 0$ such that $j=T_{i}$.  We let $I_{n}$ be the last weak ladder time of $(W_{i}: 0 \leq i \leq n)$, that is, $I_n=\max\{0 \leq j \leq n : \exists \ i \geq 0 : T_i=j\}$.

\begin{lemma}
\label{lem:In}
For every $0 \leq j \leq n$, we have
\begin{equation}
\label{eq:In}
\P(I_{n}=j)= \P( \exists \ i \geq 0 : T_i=j) \P(T_{1}>n-j)=\P(\zeta > j)\P(T_{1}>n-j).
\end{equation}
\end{lemma}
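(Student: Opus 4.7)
The proof splits into two equalities, each handled by a classical random-walk technique. The first follows from a Markov-property factorization at the deterministic time $j$; the second from the time-reversal duality between running maxima and positivity of the walk.

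For the first equality, I would start by noting that the weak ladder times are exactly the (weakly) running-maximum times: $k$ is a weak ladder time if and only if $W_k \geq W_m$ for all $0 \leq m \leq k-1$. Hence $\{I_n = j\}$ coincides with the intersection of
\begin{equation*}
A \coloneqq \{\exists\, i \geq 0 : T_i = j\} = \{W_j \geq W_k \text{ for all } 0 \leq k < j\}
\end{equation*}
and the event $B$ that no $k \in (j, n]$ is a weak ladder time. On $A$, $W_j$ is the maximum of $(W_0, \ldots, W_j)$, so $B$ reduces to the condition $W_k < W_j$ for every $k \in (j, n]$. Writing $\widehat{W}_{\ell} \coloneqq W_{j+\ell} - W_j$ and denoting by $\widehat{T}_1$ its first weak ladder time, this last event is exactly $\widehat{T}_1 > n - j$. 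Since $\widehat{W}$ is built from $(X_{j+\ell})_{\ell \geq 1}$, it is independent of $A$ and distributed as $W$, which yields $\P(I_n = j) = \P(A)\, \P(T_1 > n - j)$ and proves the first equality.

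For the second equality, I would use time reversal. Fix $j \geq 0$ and set $\tilde{W}_k \coloneqq W_j - W_{j-k}$ for $0 \leq k \leq j$. The increments $\tilde{X}_k \coloneqq \tilde{W}_k - \tilde{W}_{k-1} = X_{j+1-k}$ form a reordering of the i.i.d.\ variables $X_1, \ldots, X_j$, so $(\tilde{W}_0, \ldots, \tilde{W}_j)$ has the same distribution as $(W_0, \ldots, W_j)$. The identity $W_j - W_k = \tilde{W}_{j-k}$ then converts $A$ into $\{\tilde{W}_m \geq 0 \text{ for all } 1 \leq m \leq j\}$, whose probability equals $\P(W_m \geq 0 \text{ for all } 1 \leq m \leq j) = \P(\zeta > j)$.

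The whole argument is essentially mechanical once the right reformulations are in place. The only delicate point I would watch is the interplay between strict and weak inequalities in the definitions of $\zeta$ (strict: $W_i < 0$) and of the ladder times (weak: $W_k \geq W_{T_i}$), which is exactly what makes the time-reversal land on $\P(\zeta > j)$ rather than on a cousin quantity such as $\P(W_i \leq 0 \text{ for all } 1 \leq i \leq j)$.
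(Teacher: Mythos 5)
Your proof is correct and follows essentially the same route as the paper's. For the first equality, the paper simply invokes the Markov property at time $j$; you make the Markov-property step explicit by identifying weak ladder times with running-maximum times and then showing that, on the event $A=\{\exists i: T_i=j\}$, the event $\{I_n=j\}$ reduces to a condition on $(W_{j+\ell}-W_j : 1\le \ell\le n-j)$ alone, giving the required product of probabilities. For the second equality, your time-reversal argument (reading the increments of $W_0,\ldots,W_j$ backwards and observing that $A$ transforms into $\{\tilde W_m \ge 0\ \text{for}\ 1\le m\le j\}$, whose probability is $\P(\zeta>j)$) is exactly the paper's argument with the path $W^{[j]}$. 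Your closing remark about tracking strict versus weak inequalities — $\zeta$ is defined with $W_i<0$ while ladder times use $W_j\ge W_{T_i}$, and these match up precisely so that time reversal gives $\P(\zeta>j)$ rather than $\P(W_i\le 0\ \forall i\le j)$ — is indeed the one place where care is needed, and you handle it correctly.
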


\begin{proof}
The first equality follows from the Markov property of the random walk at time $j$.

For the second equality,  observe that saying that $j$ is a weak ladder time for $(W_{i}: 0 \leq i \leq j)$ is equivalent to saying that the random walk $(W^{[j]}_{i}: 0 \leq i \leq j)$ defined by \[W^{[j]}_{i}=W_{j}-W_{j-i}, \quad 0 \leq i \leq j\] satisfies $W^{[j]}_{i} \geq 0$ for every $0 \leq i \leq j$ (see Figure \ref{fig:timereversal}). Since $(W_{i}: 0 \leq i \leq j)$ and 
 $(W^{[j]}_{i}: 0 \leq i \leq j)$ have the same distribution, this implies that $\P(\exists \ i \geq 0 : T_i=j) =\P(\zeta > j)$ and  completes the proof.
\end{proof}

\begin{figure}[!ht]
\begin{scriptsize}
\begin{center}
\begin{tikzpicture}[scale=0.7]
\begin{scope}[shift={(0,-2.5)}]
\draw[thin, ->]	(0,0) -- (8.5,0);
\draw[thin, ->]	(0,-2) -- (0,2.5);
\foreach \x in {-2, -1, 1,2}
	\draw[dotted]	(0,\x) -- (8.5,\x);
\foreach \x in {-2, -1, 0, 1,2}
	\draw (.1,\x)--(-.1,\x)	(0,\x) node[left] {$\x$};
\coordinate (0) at (0, 0);
\coordinate (1) at (1, -1);
\coordinate (2) at (2, -2);
\coordinate (3) at (3, 2);
\coordinate (4) at (4, 1);
\coordinate (5) at (5, 1);
\coordinate (6) at (6, 2);
\coordinate (7) at (7, 1);
\coordinate (8) at (8, 2);

\newcommand{\lastx}{0}
\foreach \x [remember=\x as \lastx] in {1, 2, 3, ..., 8} \draw (\lastx) -- (\x);

\foreach \x in {2, 4, ..., 8}
	\draw (\x,.1)--(\x,-.1)	(\x,0) node[below] {$\x$};
	
\foreach \x in {0, 1, 2, 3, ..., 8} \draw [fill=black] (\x)	circle (2pt);
\end{scope}
\end{tikzpicture}
\qquad 
\begin{tikzpicture}[scale=0.7]
\begin{scope}[shift={(0,-2.5)}]
\draw[thin, ->]	(0,0) -- (8.5,0);
\draw[thin, ->]	(0,0) -- (0,4.5);
\foreach \x in {1,2,3,4}
	\draw[dotted]	(0,\x) -- (8.5,\x);
\foreach \x in { 0, 1,2,3,4}
	\draw (.1,\x)--(-.1,\x)	(0,\x) node[left] {$\x$};
\coordinate (0) at (0, 0);
\coordinate (1) at (1, 1);
\coordinate (2) at (2, 0);
\coordinate (3) at (3, 1);
\coordinate (4) at (4, 1);
\coordinate (5) at (5, 0);
\coordinate (6) at (6, 4);
\coordinate (7) at (7, 3);
\coordinate (8) at (8, 2);

\newcommand{\lastx}{0}
\foreach \x [remember=\x as \lastx] in {1, 2, 3, ..., 8} \draw (\lastx) -- (\x);

\foreach \x in {2, 4, ..., 8}
	\draw (\x,.1)--(\x,-.1)	(\x,0) node[below] {$\x$};
	
\foreach \x in {0, 1, 2, 3, ..., 8} \draw [fill=black] (\x)	circle (2pt);
\end{scope}
\end{tikzpicture}
\end{center}
\end{scriptsize}
\caption{\label{fig:timereversal}Left: an example of $(W_{i}: 0 \leq i \leq 8)$ such that $8$ is a weak ladder time; Right: its associated time-reversed path $(W^{[8]}_{i}: 0 \leq i \leq 8)$ (obtained by reading the jumps from right to left).}
\end{figure}
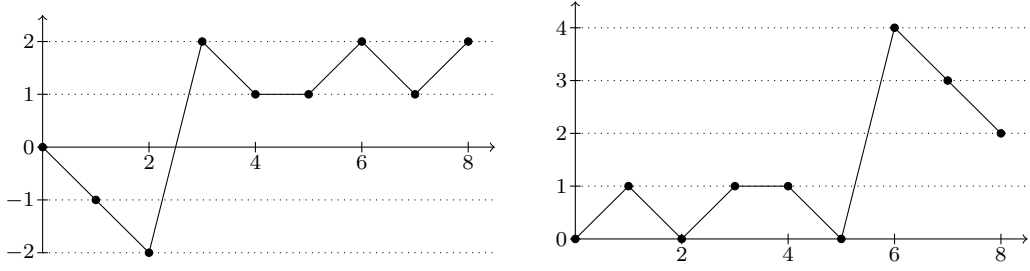

The estimates of \cref{prop:laddertimes} and \cref{prop:In} below will play an important role in the following. The first one extends \cite[Theorem 3.4]{Ber17} when there is no analyticity assumption on $L$.

\begin{proposition}
\label{prop:laddertimes}
There exists an increasing slowly varying function $\Lambda$ such that the following assertions hold.
\begin{enumerate}
\item We have
\[\P(\zeta>n)  \quad \mathop{\sim}_{n \rightarrow \infty} \quad   \frac{L(|{b}_{n}|)}{|b_{n}|} \cdot \Lambda(n) \qquad \text{and} \qquad \P(T_{1}>n)  \quad \mathop{\sim}_{n \rightarrow \infty} \quad  \frac{1}{ \Lambda(n)}.\]
\item We have
\[\sum_{k=0}^{n}\P(\zeta>k)  \quad \mathop{\sim}_{n \rightarrow \infty} \quad \Lambda(n).\]
\end{enumerate}
\end{proposition}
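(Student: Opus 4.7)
The plan is to combine Sparre Andersen--Wiener-Hopf fluctuation identities with a precise one-big-jump estimate for $\P(W_n \geq 0)$, and then to extract the three asymptotics via Tauberian theorems.

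The key input I would establish is
\[
\P(W_n \geq 0) \;\sim\; n\,\P(X \geq |b_n|) \;\sim\; \frac{L(|b_n|)}{\ell^*(|b_n|)} \qquad \text{as } n \to \infty,
\]
where the second equivalence uses \eqref{eq:bn}. Rewriting $\P(W_n \geq 0) = \P(W_n - b_n \geq |b_n|)$ and noting that $|b_n|/a_n \to \infty$ makes this a polynomial large deviation, the one-big-jump principle for heavy-tailed walks dictates that the dominant contribution comes from the event that a single step $X_i$ is of order $|b_n|$. The lower bound I would obtain by conditioning on $\{\max_i X_i \geq (1+\eps)|b_n|\}$ and bounding the remaining sum by Chebyshev (its fluctuation around $b_n$ is $O(a_n) = o(|b_n|)$); the upper bound requires a careful truncation at a level slightly above $|b_n|$ combined with Potter bounds on $L$, valid without any analyticity hypothesis.

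Combining this with the classical Sparre Andersen identity
\[
\sum_{n \geq 0} z^n \P(\zeta > n) \;=\; \exp\!\left( \sum_{k \geq 1} \frac{z^k}{k}\,\P(W_k \geq 0)\right)
\]
suggests the definition $\Lambda(n) := \exp\bigl(\sum_{k=1}^n \P(W_k \geq 0)/k\bigr)$. The one-big-jump estimate shows that the summand is regularly varying of index $-1$ with slowly varying factor, and the identity $\int^{\infty} L(x)/(x\ell^*(x))\,dx = -\int d\ell^*(x)/\ell^*(x) = +\infty$ (because $\ell^*(x) \to 0$) guarantees that $\Lambda(n) \to \infty$. Hence $\Lambda$ is slowly varying, strictly increasing, and tends to infinity. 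Karamata's Tauberian theorem applied to the Sparre Andersen identity as $z \to 1^-$ yields $\sum_{k=0}^n \P(\zeta > k) \sim \Lambda(n)$, which is assertion (ii). The multiplicative Wiener-Hopf factorization $(1-z) = (1 - \E[z^{T_1}])(1 - \E[z^\zeta])$ (valid because $T_1$ and $\zeta$ are a.s.\ finite; for $T_1$ this follows from the same divergence of the Sparre Andersen exponent) yields the convolution identity $\sum_{k=0}^n \P(T_1 > k)\P(\zeta > n-k) = 1$, to which Karamata's Tauberian theorem for the monotone sequence $\P(T_1 > n)$ gives $\P(T_1 > n) \sim 1/\Lambda(n)$. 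The sharper pointwise estimate $\P(\zeta > n) \sim \Lambda(n) L(|b_n|)/|b_n|$ is extracted by a de Haan-type refinement: differentiating the Sparre Andersen generating function yields
\[
\sum_{k \geq 1} k z^{k-1} \P(\zeta > k) \;=\; U(z) \cdot \sum_{k \geq 1} z^{k-1} \P(W_k \geq 0),
\]
and estimating the right-hand side as $z \to 1^-$ using the known asymptotics of $U$ and $\P(W_\cdot \geq 0)$, together with monotonicity of $\P(\zeta > \cdot)$, yields the claim.

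The main obstacle will be the sharp one-big-jump estimate on $\P(W_n \geq 0)$: although such principles are classical for walks with linear negative drift \cite{Dur80,KR18}, here $W$ is \emph{centered}, and both the effective drift $b_n$ and the typical fluctuation $a_n$ are of the form $n$ times slowly varying factors, so the separation between the atypical one-big-jump regime and the typical Cauchy-scale regime is delicate and must be handled uniformly without analyticity assumptions on $L$, improving over \cite{Ber17}.
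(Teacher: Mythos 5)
Your roadmap matches the paper's essentially step for step: take Berger's estimate $\P(W_n\geq 0)\sim L(|b_n|)/\ell^*(|b_n|)$ as input, define $\Lambda$ as the exponential of the (divergent) Sparre Andersen series, show slow variation, then pull out the three asymptotics via Wiener--Hopf plus Karamata/de Haan Tauberian theorems. The one real departure is that you propose to \emph{re-derive} the one-big-jump estimate on $\P(W_n\geq 0)$ from scratch, whereas the paper simply cites \cite[Lemma 7.3]{Ber17}; that re-derivation is precisely the delicate part and is the content of Berger's Lemma 7.3 and Eq.~(7.23), so you would be redoing his work (your Chebyshev remark also needs replacing by a truncated-second-moment argument since $X$ has infinite variance, but the strategy is sound).

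Two concrete inaccuracies to flag. First, the convolution identity you derive from $(1-z)=(1-\E[z^{T_1}])(1-\E[z^\zeta])$ is wrong: writing $1-\E[z^{T_1}]=(1-z)\sum_n\P(T_1>n)z^n$ and similarly for $\zeta$ gives $\frac{1}{(1-z)^2}=\bigl(\sum_n\P(T_1>n)z^n\bigr)\bigl(\sum_n\P(\zeta>n)z^n\bigr)$, i.e.\ $\sum_{k=0}^n\P(T_1>k)\P(\zeta>n-k)=n+1$, not $1$. Moreover this identity is not what you want to feed into Karamata; the clean route (the one the paper takes) is the direct identity $\sum_n\P(T_1>n)z^n = \bigl[(1-z)\,\Lambda(1/(1-z))\bigr]^{-1}$, to which Karamata's Tauberian theorem plus monotonicity of $\P(T_1>\cdot)$ yields $\P(T_1>n)\sim 1/\Lambda(n)$. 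Second, the ``de Haan-type refinement'' step is the subtle one and your sketch glosses over why monotonicity suffices: the coefficient of $z^{n-1}$ in $p'(z)$ is $n\P(\zeta>n)$, which is \emph{not} obviously monotone, so a naive Tauberian inversion does not apply. The paper's actual argument substitutes $\hat p(u)=p(e^{-u})$, shows $\hat p(1/(ct))-\hat p(1/t)\sim \ln(c)\cdot\frac{L(|b_t|)}{\ell^*(|b_t|)}\Lambda(t)$, and then invokes de Haan's Tauberian/monotone-density theorem for the Laplace--Stieltjes transform $\hat p(u)=\sum_n\P(\zeta>n)e^{-nu}$, where the monotone quantity is $\P(\zeta>\cdot)$ itself and the relation $|b_n|\sim n\ell^*(|b_n|)$ converts the result into the stated form. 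Your outline has the right ingredients but this extra change of variables is what makes the argument actually close.
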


The key point is that the same slowly varying function $\Lambda$ appears in both asymptotic estimates. Its proof is based on a recent estimate of $\P(W_{n} \geq 0)$ due to Berger \cite{Ber17}.

\begin{proof}[Proof of \cref{prop:laddertimes}]
Our main input is the following estimate of \cite[Lemma 7.3]{Ber17}:
\begin{equation}
\label{eq:estimate1}\P(W_{n} \geq 0)  \quad \mathop{\sim}_{n \rightarrow \infty} \quad  \frac{L(|b_{n}|)}{\ell^{*}(|b_{n}|)}.
\end{equation}

Now, define the function $\Lambda$ by
\[\Lambda \left(  \frac{1}{1-s} \right) \coloneqq \exp \left( \sum_{k=1}^{\infty} \frac{\P(W_{k} \geq 0)}{k}s^{k} \right), \qquad s \in [0,1),\]
and observe that $\Lambda$ is increasing.

As $n \rightarrow \infty$, since $\P(W_{n} \geq 0) \rightarrow 0$, we also have $ \frac{1}{n} \sum_{k=1}^{n}\P(W_{k} \geq 0) \rightarrow 0$, so that by \cite[Lemma 1]{Rog71}, $\Lambda$ is slowly varying. Note that by \cite[Theorem 1.8.2]{BGT89} instead of working with regularly varying sequences $(\bullet_{n} : n \geq 1)$, we may work with infinitely differentiable functions $(\bullet_{u} : u \geq 0)$ (with $\bullet \in \{\ell^{*},L,\Lambda,a,b\}$).

For the first assertion, by the Wiener-Hopf factorization (see Theorem 4 in \cite[XII.7]{Fel71}):
\[p(s) \coloneqq \sum_{n=0}^{\infty} \P(\zeta>n) s^{n}= \exp \left( \sum_{k=1}^{\infty} \frac{\P(W_{k} \geq 0)}{k}s^{k} \right) = \Lambda \left(  \frac{1}{1-s} \right), \qquad s \in [0,1).\]
 Then, by \cite[Eq.~(7.23)]{Ber17}, we have
\[\sum_{k=1}^{\infty} \P(W_{k} \geq 0) s^{k}  \quad \mathop{\sim}_{s \uparrow 1} \quad   \frac{1}{1-s} \frac{L(|{b}_{1/(1-s)}|)}{\ell^{*}(|{b}_{1/(1-s)}|)},\]
so that
\[p'(s)  \quad \mathop{\sim}_{s \uparrow 1} \quad   \frac{1}{1-s} \frac{L(|{b}_{1/(1-s)}|)}{\ell^{*}(|{b}_{1/(1-s)}|)} \Lambda \left(  \frac{1}{1-s} \right).\]
In particular, setting $\hat{p}(u)=p(e^{-u})$ for $u>0$, we have
\begin{equation}
\label{eq:hatp}
\hat{p}'(u) \quad \mathop{\sim}_{u \downarrow 0} \quad   - \frac{1}{u} \frac{L(|{b}_{1/u}|)}{\ell^{*}(|{b}_{1/u}|)} \Lambda \left(  \frac{1}{u} \right).
\end{equation}
We now claim that it is enough to check that for every fixed $c>0$,
\begin{equation}
\label{eq:cvhat} { \hat{p} \left( \frac{1}{c t} \right) -  \hat{p} \left( \frac{1}{ t} \right)}   \quad \mathop{\sim}_{t \rightarrow \infty} \quad  \ln(c) \cdot \frac{L(|b_{t}|)}{\ell^{*}(|b_{t}|)}\Lambda(t).
\end{equation}
Indeed, by de Haan's monotone density theorem (see e.g.~\cite[Theorem 3.6.8 and 3.7.2]{BGT89}), this will imply that $\P(\zeta>n) \sim  \frac{L(|{b}_{n}|)}{|b_{n}|} \Lambda(n)$ since $|b_{n}|\sim n\ell^{*}(|b_{n}|)$. 
To establish \eqref{eq:cvhat}, write
\begin{align*}
  \hat{p} \left( \frac{1}{c t} \right) -  \hat{p} \left( \frac{1}{ t} \right) &= \int_{1/t}^{1/(ct)} \hat{p}'(u) {\d}u =  \int_{1}^{1/c} \hat{p}' \left( \frac{x}{t}\right) \frac{{\d}x}{t}.
 \end{align*}
 For a slowly varying function $\ell$, the convergence ${\ell(ax)}/{\ell(x)} \rightarrow 1$ holds uniformly for $a$ in compact subsets of $\R_{+}^{*}$ when $x \rightarrow \infty$ (see \cite[Theorem 1.5.2]{BGT89}), so by \eqref{eq:hatp} we have
 \[ \frac{1}{t}\hat{p}' \left( \frac{x}{t}\right)  \quad \mathop{\sim}_{t \rightarrow \infty} \quad- \frac{1}{x} \frac{L(|b_{t}|)}{\ell^{*}(|b_{t}|)}\Lambda(t),\]
 uniformly in $\min(1,1/c) \leq x \leq \max(1,1/c)$. Thus
 \[\hat{p} \left( \frac{1}{c t} \right) -  \hat{p} \left( \frac{1}{ t} \right)  \quad \mathop{\sim}_{t \rightarrow \infty} \quad \frac{L(|b_{t}|)}{\ell^{*}(|b_{t}|)}\Lambda(t) \int_{1/c}^{1} \frac{{\d}x}{x}= \ln(c) \cdot \frac{L(|b_{t}|)}{\ell^{*}(|b_{t}|)}\Lambda(t).\]
 This establishes \eqref{eq:cvhat}. Note also that since $p(s)= \Lambda(1/(1-s))$, Karamata's Tauberian theorem for power series \cite[Corollary 1.7.3]{BGT89} readily implies assertion $(ii)$. 
 
 We now turn to the behavior of $\P(T_{1}>n)$ as $n \rightarrow \infty$. Once again, by the Wiener-Hopf factorization \cite[XII.7, Theorem 1]{Fel71}, we have
 \[1-\sum_{n=0}^{\infty} \P(T_1=n) s^{n}= \exp \left( -\sum_{k=1}^{\infty} \frac{\P(W_{k} \geq 0)}{k}s^{k} \right) =  \frac{1}{\Lambda \left(  \frac{1}{1-s} \right)}, \qquad s\in[0,1).\]
 Then, an application of Karamata's Tauberian theorem \cite[Theorem 8.1.6]{BGT89} (see also \cite[Theorem 3]{Rog71}) ensures that $\P(T_{1}>n)\sim \Lambda(n)$ as $n \rightarrow \infty$.
 \end{proof}
 
 \begin{remark} It is possible to relax the condition $\Pr{X_{1} < -1}=0$. Indeed, if $(X_{i} : i \geq 1)$ is a sequence of i.i.d.\ integer-valued random variables such that $\Es{X_{1}}=0$, $\Pr{X_{1} \geq x} \sim p {L(x)}/{x}$, $\Pr{X_{1} \leq -x} \sim q {L(x)}/{x}$ with $p+q=1$ (interpreted as $o(L(x)/x)$ if $p,q=0$). When $p>q$, the same proof using \cite[Lemma 7.3 and Eq.~(7.23)]{Ber17} shows the existence of a slowly varying function $\Lambda$ such that
 \[\P(\zeta>n)  \quad \mathop{\sim}_{n \rightarrow \infty} \quad   \frac{L(|{b}_{n}|)}{|b_{n}|} \cdot \Lambda(n), \qquad \P(T_{1}>n)  \quad \mathop{\sim}_{n \rightarrow \infty} \quad  \frac{1}{ \Lambda(n)}.\]
 \end{remark}

We state the following technical corollary in view of future use.

\begin{corollary}
\label{cor:utile}Let $(x_{n} : n \geq 1)$ be a sequence of positive real numbers such that $x_{n}=o(n)$ as $n \rightarrow \infty$. Then, the following estimates hold as $n \rightarrow \infty$:
\begin{enumerate}
\item  We have $\max_{1 \leq j \leq x_{n}} \left| \frac{\P(T_{1}>n-j)} {\P(T_{1}>n)}-1 \right| \rightarrow 0$.
\smallskip
\item We have $\frac{\P(\zeta>n)}{\P(\zeta \geq x_{n})} \rightarrow 0$.
\smallskip
\item We have $\Pr{X \geq |b_n|} \sim {\Pr{\zeta \geq n} \P(T_{1}>n)}$.
\end{enumerate}
\end{corollary}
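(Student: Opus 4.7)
The plan is to derive all three estimates directly from Proposition \ref{prop:laddertimes}(i), using the uniform convergence theorem and Potter's bounds for slowly varying functions \cite[Theorems 1.2.1 and 1.5.6]{BGT89}. The main point is to control the interplay between the regularly varying scale $|b_n|$ (of index $1$) and the slowly varying functions $L$ and $\Lambda$.

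For (i), I would use $\P(T_{1} > n) \sim 1/\Lambda(n)$ to rewrite
\[
\frac{\P(T_{1} > n-j)}{\P(T_{1} > n)} \;=\; (1 + o(1)) \cdot \frac{\Lambda(n)}{\Lambda(n-j)},
\]
the $o(1)$ being uniform in $j \in \{1, \dots, \lfloor x_n \rfloor\}$ since $n - j \geq n - x_n \to \infty$. Because $\Lambda$ is slowly varying, the uniform convergence theorem yields $\Lambda(\lambda n)/\Lambda(n) \to 1$ uniformly for $\lambda$ in any compact subset of $(0, \infty)$; applying this with $\lambda = (n-j)/n \in [1 - x_n/n, 1] \subset [1/2, 1]$ (for $n$ large) gives the claim.

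The main obstacle is (ii), where one must carefully balance three slowly varying factors against a genuine regularly varying gain. Passing to a subsequence, one may assume either that $x_n$ remains bounded (in which case $\P(\zeta \geq x_n)$ is bounded away from $0$ and the conclusion follows from $\P(\zeta > n) \to 0$), or that $x_n \to \infty$. In the latter case Proposition \ref{prop:laddertimes}(i) yields
\[
\frac{\P(\zeta > n)}{\P(\zeta \geq x_n)} \;\sim\; \frac{|b_{x_n}|}{|b_n|} \cdot \frac{L(|b_n|)}{L(|b_{x_n}|)} \cdot \frac{\Lambda(n)}{\Lambda(x_n)}.
\]
The first factor, a ratio of two quantities regularly varying of index $1$ in $n$, decays like $x_n/n$ up to slowly varying corrections; Potter's bounds then show that the two other factors are each $O((n/x_n)^{\varepsilon})$ for any prescribed $\varepsilon > 0$. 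Thus the whole product is $O((x_n/n)^{1 - c\varepsilon})$ for some fixed constant $c$, which tends to $0$ for $\varepsilon$ small enough.

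Finally, (iii) is a direct combination of estimates: Proposition \ref{prop:laddertimes}(i) gives $\P(\zeta \geq n) = \P(\zeta > n-1) \sim {L(|b_n|)}/{|b_n|} \cdot \Lambda(n)$ (using $|b_{n-1}| \sim |b_n|$ together with slow variation of $L$ and $\Lambda$), while $\P(T_1 > n) \sim 1/\Lambda(n)$. Multiplying these two asymptotics, the $\Lambda(n)$ factors cancel and leave ${L(|b_n|)}/{|b_n|}$, which is asymptotically $\P(X \geq |b_n|)$ by the tail assumption \eqref{eq:hypX} (and the fact that $|b_n| \to \infty$).
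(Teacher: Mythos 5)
Your proposal is correct and takes essentially the same approach as the paper: all three parts follow from Proposition~\ref{prop:laddertimes}(i) together with basic properties of regularly and slowly varying functions (uniform convergence and Potter's bounds). The only cosmetic differences are that for (i) the paper uses the monotonicity of $n\mapsto\P(T_1>n)$ to bound the ratio uniformly by $\P(T_1>\lambda n)/\P(T_1>n)$ for a fixed $\lambda>0$, sidestepping the explicit appeal to uniform convergence, and for (ii) it folds the three slowly varying factors into a single slowly varying function $\ell$ with $\P(\zeta\geq n)=\ell(n)/n$ and applies Potter's bound once to $\ell$, rather than bounding each factor separately (the case split on $x_n$ bounded versus $x_n\to\infty$ is then absorbed by the ``global'' form of Potter's bound, \cite[Theorem~1.5.6(iii)]{BGT89}).
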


\begin{proof}
For (i), first fix $n_{0}>1$ sufficiently large so that $ \lambda \coloneqq \inf_{m \geq n_{0}}(1-x_{m}/m)>0$. Then, for $n \geq n_{0}$, by monotonicity, for every $1 \leq j \leq x_{n}$, $ 1 \leq {\P(T_{1}>n-j)}/ {\P(T_{1}>n)} \leq  \P(T_{1}>\lambda n)/\P(T_{1}>n)$. By \cref{prop:laddertimes} $(i)$, the last quantity tends to $1$ as $n \rightarrow \infty$, which yields the first assertion.

For the second assertion, observe that by \cref{prop:laddertimes} $(i)$ one may write $\P(\zeta \geq n)= {\ell(n)}/{n}$ with $\ell$ a slowly varying function.  Again by Potter's bound, there exists a constant $A>0$ such that for every $n \geq 1$, $\ell(n)/\ell(x_{n}) \leq A (n/x_{n})^{1/2}$, so that 
\[\frac{\P(\zeta>n)}{\P(\zeta \geq x_{n})} \leq A \left(  \frac{x_{n}}{n} \right)^{1/2}  \quad \xrightarrow[n\to\infty]{} \quad 0.\]

The last assertion is an immediate consequence of \cref{prop:laddertimes} $(i)$, since $\P(X \geq |b_{n}|)= \frac{L(|b_{n}|)}{|b_{n}|}$. This completes the proof.
\end{proof}

The following estimate concerning the asymptotic behavior of $I_{n}$, the last weak ladder time of $(W_{i}: 0 \leq i \leq n)$, will be important.

\begin{proposition}
\label{prop:In}
Let $\tilde{\Lambda}$ be any continuous increasing slowly varying function such that, as $n \rightarrow \infty$,  $ \P(T_{1}>n)\sim \frac{1}{ \tilde\Lambda(n)}$. The following assertions hold as $n \rightarrow \infty$:
\begin{enumerate}
\item For every $x\in(0,1)$,
$\P \left( \frac{{\tilde\Lambda}(I_{n})}{{\tilde\Lambda}(n)} \leq x \right)  \rightarrow  x$.
\smallskip
\item  The convergence $ \frac{I_{n}}{n}  \rightarrow  0$ holds in probability. 
\end{enumerate}
\end{proposition}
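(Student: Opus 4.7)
The natural starting point is the explicit formula
\[\P(I_{n}=j) = \P(\zeta>j)\,\P(T_{1}>n-j), \qquad 0\le j\le n,\]
from \cref{lem:In}, which summed gives $\P(I_{n}\le k) = \sum_{j=0}^{k} \P(\zeta>j)\,\P(T_{1}>n-j)$. The strategy is to evaluate this sum with the help of \cref{prop:laddertimes} and \cref{cor:utile}. Note that the function $\Lambda$ produced by \cref{prop:laddertimes} satisfies $\tilde\Lambda \sim \Lambda$, so the two are interchangeable in asymptotic statements.

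For assertion (i), I fix $x\in(0,1)$ and define $k_{n} \coloneqq \lfloor \tilde\Lambda^{-1}(x\tilde\Lambda(n)) \rfloor$, which is well-defined for $n$ large since $\tilde\Lambda$ is continuous, increasing and tends to infinity. The first observation is that $k_{n}=o(n)$: indeed, for any $\varepsilon>0$, slow variation gives $\tilde\Lambda(\varepsilon n)/\tilde\Lambda(n)\to 1 > x$, so eventually $\tilde\Lambda(\varepsilon n) > x\tilde\Lambda(n)$, forcing $k_{n}<\varepsilon n$. Since $\tilde\Lambda$ is continuous and increasing, $\{I_{n}\le k_{n}\} = \{\tilde\Lambda(I_{n})/\tilde\Lambda(n)\le x\}$ up to an asymptotically negligible discretization error. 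Now write
\[\P(I_{n}\le k_{n}) = \sum_{j=0}^{k_{n}} \P(\zeta>j)\,\P(T_{1}>n-j).\]
By \cref{cor:utile}(i), $\P(T_{1}>n-j)/\P(T_{1}>n)\to 1$ uniformly for $0\le j\le k_{n}$. Factoring this out and applying \cref{prop:laddertimes}(ii), the sum becomes
\[\P(I_{n}\le k_{n}) \;\mathop{\sim}_{n\to\infty}\; \P(T_{1}>n)\cdot\Lambda(k_{n}) \;\mathop{\sim}_{n\to\infty}\; \frac{\tilde\Lambda(k_{n})}{\tilde\Lambda(n)} \;\longrightarrow\; x,\]
using $\P(T_{1}>n)\sim 1/\tilde\Lambda(n)$ and $\tilde\Lambda\sim\Lambda$. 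This proves (i).

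Assertion (ii) then follows purely from (i) together with slow variation. Given $\varepsilon>0$ and $\delta>0$, since $\tilde\Lambda(\varepsilon n)/\tilde\Lambda(n)\to 1$, we have $\tilde\Lambda(\varepsilon n)/\tilde\Lambda(n) > 1-\delta$ for $n$ large, so $\{I_{n}>\varepsilon n\}\subset \{\tilde\Lambda(I_{n})/\tilde\Lambda(n) > 1-\delta\}$. By (i) applied with $x=1-\delta$,
\[\limsup_{n\to\infty}\P(I_{n}>\varepsilon n) \;\le\; \limsup_{n\to\infty}\P\!\left(\tfrac{\tilde\Lambda(I_{n})}{\tilde\Lambda(n)} > 1-\delta\right) = \delta,\]
and sending $\delta\to 0$ gives $\P(I_{n}>\varepsilon n)\to 0$.

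The only mildly delicate step is the uniform replacement of $\P(T_{1}>n-j)$ by $\P(T_{1}>n)$ for $j\le k_{n}$, but this is precisely the content of \cref{cor:utile}(i) once $k_{n}=o(n)$ has been checked; the rest of the argument is a bookkeeping application of the slow variation of $\tilde\Lambda$ together with \cref{prop:laddertimes}.
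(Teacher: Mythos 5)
Your proposal is correct and follows essentially the same approach as the paper's proof: both start from the exact formula of \cref{lem:In}, use \cref{cor:utile}(i) to uniformly replace $\P(T_1>n-j)$ by $\P(T_1>n)$ once the cutoff is shown to be $o(n)$, invoke \cref{prop:laddertimes}(ii) to evaluate $\sum_{j\le k}\P(\zeta>j)$, and then deduce (ii) from (i) via slow variation. The only (harmless) omission is that you don't explicitly note $k_n\to\infty$, which is needed so that \cref{prop:laddertimes}(ii) applies at $k_n$ and so that $\tilde\Lambda(k_n+1)/\tilde\Lambda(k_n)\to 1$ justifies ignoring the floor, but this is immediate since $\tilde\Lambda\to\infty$ and $\tilde\Lambda^{-1}$ is increasing.
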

Note that one could for instance take $\tilde\Lambda$ to be the function $\Lambda$ provided by \cref{prop:laddertimes}. 
\begin{proof}[Proof of \cref{prop:In}]
Let $(x_{n} : n\geq 1)$ be a sequence of positive real numbers such that $x_{n} \rightarrow \infty$ and $x_{n}=o(n)$ as $n \rightarrow \infty$. Let us first show that
\begin{equation}
\label{eq:In1} \P(I_{n} \leq x_{n})  \quad \mathop{\sim}_{n \rightarrow \infty} \quad  \frac{\tilde\Lambda(x_{n})}{\tilde\Lambda(n)}.
\end{equation}
To establish this, we use Lemma \ref{lem:In} to write $\P(I_{n} \leq x_{n})=\sum_{j=0}^{x_{n}} \P(\zeta > j)\P(T_{1}>n-j)$. Then, by Corollary \ref{cor:utile} $(i)$ and \cref{prop:laddertimes}, we have $\Lambda(n) \sim \tilde{\Lambda}(n)$ as $n \rightarrow \infty$ and
\[ \P(I_{n} \leq x_{n})  \quad \mathop{\sim}_{n \rightarrow \infty} \quad  \P(T_{1}>n) \sum_{j=0}^{x_{n}} \P(\zeta > j)  \quad \mathop{\sim}_{n \rightarrow \infty} \quad  \P(T_{1}>n)  \tilde\Lambda(x_{n}) \quad \mathop{\sim}_{n \rightarrow \infty} \quad  \frac{\tilde\Lambda(x_{n})}{\tilde\Lambda(n)}.\]

Next, fix $x \in (0,1)$. Since $\tilde\Lambda$ is increasing and continuous, we may consider its inverse $\tilde\Lambda^{-1}$, so that
\[\P \left( \frac{{\tilde\Lambda}(I_{n})}{{\tilde\Lambda}(n)} \leq x \right)=\P \left( I_{n} \leq {\tilde\Lambda}^{-1}(x {\tilde\Lambda}(n)) \right).\]
We claim that $ {\tilde\Lambda}^{-1}(x {\tilde\Lambda}(n)) \rightarrow \infty$ and that $ {\tilde\Lambda}^{-1}(x {\tilde\Lambda}(n))=o(n)$ as $n\rightarrow \infty$. The first convergence is clear since $\tilde\Lambda \rightarrow \infty$. For the second one, argue by contradiction and assume that there is $\varepsilon>0$ such that along a subsequence ${\tilde\Lambda}^{-1}(x {\tilde\Lambda}(n)) \geq \varepsilon n$. Then $x {\tilde\Lambda}(n) \geq {\tilde\Lambda}(\varepsilon n)$ along this subsequence. But ${\tilde\Lambda}(\varepsilon n)/{\tilde\Lambda}(n) \rightarrow 1$ since ${\tilde\Lambda}$ varies slowly. This implies $x \geq 1$, a contradiction. These claims then allow to use \eqref{eq:In1}:
\[\P \left( \frac{{\tilde\Lambda}(I_{n})}{{\tilde\Lambda}(n)} \leq x \right)  \quad \mathop{\sim}_{n \rightarrow \infty} \quad  \frac{\tilde\Lambda( {\tilde\Lambda}^{-1}(x {\tilde\Lambda}(n))))}{\tilde\Lambda(n)}= \frac{x \tilde\Lambda(n)}{\tilde\Lambda(n)}=x,\]
which establishes $(i)$.

For the assertion $(ii)$, fix $\varepsilon>0$. Then the previous paragraph shows that for every fixed $x \in (0,1)$, we have $x {\tilde\Lambda}(n) \leq {\tilde\Lambda}(\varepsilon n)$ for $n$ sufficiently large. Hence
\[\P(I_{n} \leq \varepsilon n)=\P(\tilde\Lambda(I_{n}) \leq \tilde\Lambda(\varepsilon n)) \geq  \P(\tilde\Lambda(I_{n}) \leq x \tilde\Lambda(n) ) \quad \xrightarrow[n\to\infty]{} \quad x.\]
Since this is true for every $x \in (0,1)$, it follows that $\P(I_{n} \leq \varepsilon n) \rightarrow 1$.
Observe that one could also obtain this as a consequence of the functional convergence \eqref{eq:cvfonctionnelle}, thanks to the definition of~$I_{n}$. 
\end{proof}

We conclude this section with an estimate concerning the number of weak ladder times up to time $n$.   Recall that $(T_{i} : i \geq 0)$ denotes the sequence of (weak) ladder times of $(W_{i} : i \geq 0)$. 
   
\begin{proposition}
\label{prop:numberladdertimes}
Let $H_{n}= \# \{ i \geq 0 : T_{i} \leq n\}$ be the number of weak ladder times of $(W_{i}: 0 \leq i \leq n)$.
We have 
 \[ \P(T_{1}>n) \cdot {H_{n}}   \quad \xrightarrow[n\to\infty]{(d)} \quad \mathsf{Exp}(1).\]
 \end{proposition}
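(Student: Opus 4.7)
The plan is to exploit the renewal structure of the sequence $(T_i)$ and to reduce the question to a Laplace-transform analysis of a sum of i.i.d.\ random variables with slowly varying tail. By the strong Markov property of $W$ at each ladder time, the inter-arrival times $(T_i - T_{i-1} : i \geq 1)$ are i.i.d.\ copies of $T_1$, which is a.s.\ finite since $\P(T_1 > n) \to 0$ by \cref{prop:laddertimes}(i). Writing $T_k$ as the sum of $k$ such copies, the strict monotonicity of $(T_i)$ gives the duality $\{H_n \geq k + 1\} = \{T_k \leq n\}$. Since $\P(T_1 > n)\Lambda(n) \to 1$, the target convergence reduces to showing that $\P(T_{\lceil x \Lambda(n)\rceil} \leq n) \to e^{-x}$ for every fixed $x > 0$.

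Set $\phi(\lambda) \coloneqq \E[e^{-\lambda T_1}]$. Starting from the identity $1 - \phi(\lambda) = \lambda \int_0^{\infty} e^{-\lambda t} \P(T_1 > t)\,\d t$, the tail equivalent $\P(T_1 > t) \sim 1/\Lambda(t)$, and Karamata's Abelian theorem \cite[Theorem 1.7.1]{BGT89} applied to the slowly varying integrand $1/\Lambda$, I get
\[
1 - \phi(\lambda) \quad \mathop{\sim}_{\lambda \downarrow 0} \quad \frac{1}{\Lambda(1/\lambda)}.
\]
Setting $k_n \coloneqq \lceil x \Lambda(n) \rceil$, for each fixed $\lambda > 0$,
\[
\E\!\left[e^{-\lambda T_{k_n}/n}\right] = \phi(\lambda/n)^{k_n} = \exp\!\left(-\frac{k_n (1 + o(1))}{\Lambda(n/\lambda)}\right) \ \xrightarrow[n\to\infty]{} \ e^{-x},
\]
because $\Lambda$ is slowly varying (so $\Lambda(n/\lambda)/\Lambda(n) \to 1$ for fixed $\lambda$) and $k_n/\Lambda(n) \to x$.

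The convergence of the Laplace transform of $T_{k_n}/n$ to the constant function $\lambda \mapsto e^{-x}$ forces weak convergence of $T_{k_n}/n$, viewed as a random variable on the compactification $[0, \infty]$, to $e^{-x}\delta_0 + (1 - e^{-x})\delta_\infty$: this is the unique probability measure on $[0, \infty]$ whose Laplace transform equals the constant $e^{-x}$ on $(0, \infty)$, as one sees by letting $\lambda \to \infty$ and $\lambda \to 0^{+}$ to recover the masses at $0$ and at $\infty$ respectively. Since $1$ is a continuity point of this limit distribution, Portmanteau yields $\P(T_{k_n} \leq n) = \P(T_{k_n}/n \leq 1) \to e^{-x}$, which is the desired conclusion. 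The main subtlety of the argument is this last step, converting Laplace-transform convergence to a constant (in $\lambda$) into pointwise control of $\P(T_{k_n} \leq n)$, which requires the compactification trick; the rest is a direct application of Karamata's theorem together with the slow variation of $\Lambda$ furnished by \cref{prop:laddertimes}.
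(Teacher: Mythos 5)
Your proof is correct, and it takes a genuinely different route from the paper's. The paper cites Darling's classical result \cite[Theorem~4.1]{Dar52}, which (for a renewal sequence whose inter-arrival tail $r(n) = \P(T_1>n)$ is slowly varying) gives $\P\left(n\,r(T_n)\geq x\right)\to e^{-x}$ directly; the conclusion then follows in two lines from the substitution $n\mapsto\lceil x/r(n)\rceil$ and the identity $\{H_n > x/r(n)\} = \{T_{\lceil x/r(n)\rceil}\leq n\}$ together with the monotonicity of $r$. Your argument bypasses Darling's theorem entirely and reproves the needed estimate from scratch: the renewal structure reduces everything to the Laplace transform of a sum of i.i.d.\ copies of $T_1$, Karamata's Abelian theorem converts the tail equivalent $\P(T_1>t)\sim 1/\Lambda(t)$ into $1-\phi(\lambda)\sim 1/\Lambda(1/\lambda)$, and a routine computation yields $\E[e^{-\lambda T_{k_n}/n}]\to e^{-x}$. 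The one genuinely delicate step, which you correctly identify, is that the limiting Laplace transform is a nonzero constant, so the limit distribution puts mass both at $0$ and at $\infty$; working on the compactified half-line $[0,\infty]$, where tightness is automatic and the Laplace transform still determines the measure (the masses at $0$ and at $\infty$ being recovered from the limits as $\lambda\to\infty$ and $\lambda\to 0^+$), resolves this cleanly, and Portmanteau at the continuity point $1$ finishes. There is one small imprecision to flag: after unwinding $\P(r(n)H_n\geq x)$ one lands on the index $\lceil x/r(n)\rceil - 1$ rather than $\lceil x\Lambda(n)\rceil$, but since your Laplace computation works verbatim for any sequence $m_n$ with $m_n/\Lambda(n)\to x$, and both choices satisfy this, the conclusion is unaffected. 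Overall, your approach trades the brevity of citing Darling for a self-contained argument; it is a bit longer but arguably more illuminating, as it makes the mechanism (slow variation of the inter-arrival tail, and the resulting degenerate $\{0,\infty\}$-valued scaling limit of $T_{k_n}/n$) explicit rather than hidden inside an external reference.
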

 \begin{proof}
Fix $x>0$. Writing $\P(T_{1}>n)=r(n)$ to simplify notation, by \cite[Theorem 4.1]{Dar52},
\[\P \left( n r(T_{n}) \geq x \right)  \quad \xrightarrow[n\to\infty]{} \quad e^{-x}.\]
By replacing $n$ with $\lceil x/ r(n) \rceil $ we get that 
\[\P \left(r(T_{\lceil x /r(n) \rceil}) \geq r(n)\right) \quad \xrightarrow[n\to\infty]{} \quad e^{-x}.\]
But, since $r$ is decreasing,  we have
\[\P \left( H_{n} >  \frac{x}{r(n)} \right)=\P \left( T_{\lceil x/ r(n) \rceil } \leq n\right)=\P \left(r(T_{\lceil x /r(n) \rceil}) \geq r(n)\right)   \quad \xrightarrow[n\to\infty]{} \quad e^{-x}.\]
This completes the proof.
 \end{proof}

\subsection{Improvement in the local setting}\label{sec:EstimatesLocal}

In the previous section, we have established estimates on the random walk associated with the {\L}ukasiewicz path of a $\BGW_\mu$ tree under the assumption \eqref{eq:hypmu}. The goal of this section is to discuss one improvement in these estimates under the stronger assumption \eqref{eq:hypmustar}. In terms of the {\L}ukasiewicz path, this translates into the following assumption on the increment $X$.

\begin{center}
\uwave{\hspace{\linewidth}}
\begin{equation}
   \label{eq:hypXstar}
  \Es{X}=0, \qquad \Pr{X = x}\quad \mathop{\sim}_{x \rightarrow \infty} \quad\frac{L(x)}{x^2}  \qquad \text{and} \qquad  \Pr{X < -1}=0 .
  \tag{$\textrm{H}^{\mathrm{loc}}_X$}
   \end{equation}
\uwave{\hspace{\linewidth}}
\end{center} 
\medskip

Note that under these assumptions $\P(X_{1} \geq x) \sim \frac{L(x)}{x}$ as $x \rightarrow \infty$ so that \eqref{eq:hypX} is satisfied and the results of \cref{sec:EstimatesGeneralSetting} also hold. In this new setting, our main input is the following estimate, due to Berger \cite[Theorem 2.4]{Ber17}:
\begin{equation}
\label{eq:equivlocal}\P(W_{n}=-1)  \quad \mathop{\sim}_{n \rightarrow \infty} \quad n \frac{L(|b_{n}|)}{|b_{n}|^{2}}.
\end{equation}
(Indeed, we apply \cite[Theorem 2.4 $(i)$]{Ber17} with $x=-\lfloor b_{n} \rfloor-1$.)

Recall that $\ell^{*}$ is the slowly varying function defined by $\ell^{*}(n)=\sum_{k= n}^{\infty} \frac{L(k)}{k}$, which satisfies $L(n)/\ell^{*}(n) \rightarrow 0$ as $n \rightarrow \infty$. The next result identifies the slowly varying function $\Lambda$ in \cref{prop:laddertimes} under the assumptions of this section.

\begin{lemma}
\label{lem:estimateslocal}
The following estimates hold as $ n \rightarrow \infty$.
\begin{enumerate}
\item $\P(\zeta=n) \sim  \frac{L(|b_{n}|)}{b_{n}^{2}}$ and  $\P(\zeta \geq n)  \sim  \frac{n L(|b_{n}|)}{b_{n}^{2}}$.
\smallskip
\item $\P(T_{1}>n) \sim \ell^{*}(|b_{n}|) \sim \ell^{*}(a_{n})$.
\end{enumerate}
\end{lemma}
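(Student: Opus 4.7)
The strategy is to exploit the downward skip-freeness of $W$ (guaranteed by $\P(X<-1)=0$) together with Berger's local estimate \eqref{eq:equivlocal}, and then to read off the slowly varying function $\Lambda$ from \cref{prop:laddertimes} by matching asymptotics.

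First I would invoke Kemperman's cycle lemma for downwards skip-free walks, which yields the identity $\P(\zeta=n)=\tfrac{1}{n}\P(W_n=-1)$ for every $n\geq 1$. Combined with \eqref{eq:equivlocal}, this immediately gives $\P(\zeta=n)\sim L(|b_n|)/b_n^2$, i.e.\ the first half of (i). For the tail, the summand $L(|b_n|)/b_n^2$ is regularly varying of index $-2$ in $n$, since $|b_n|$ is regularly varying of index $1$ and $n\mapsto L(|b_n|)$ is slowly varying as the composition of a slowly varying function with a regularly varying function of positive index. Karamata's theorem for summation of regularly varying sequences of index $-\rho$ with $\rho=2>1$ then delivers
\[\P(\zeta\geq n)=\sum_{k\geq n}\P(\zeta=k)\sim\frac{n L(|b_n|)}{b_n^2},\]
completing (i).

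For (ii), I would match the estimate just obtained with \cref{prop:laddertimes}(i), which provides $\P(\zeta>n)\sim L(|b_n|)\Lambda(n)/|b_n|$ for a slowly varying $\Lambda$. Identification forces $\Lambda(n)\sim n/|b_n|$, and because $|b_n|\sim n\ell^*(|b_n|)$ by \eqref{eq:bn}, this reads $\Lambda(n)\sim 1/\ell^*(|b_n|)$. Plugging this into the second assertion of \cref{prop:laddertimes}(i) yields $\P(T_1>n)\sim 1/\Lambda(n)\sim \ell^*(|b_n|)$. To get the second equivalence $\ell^*(|b_n|)\sim \ell^*(a_n)$ in (ii), I would rely on $|b_n|\sim n\ell^*(a_n)$ from \eqref{eq:bn} together with the representation theorem for slowly varying functions to show that $\ell^*$ is insensitive to the slowly varying correction factor between $|b_n|$ and $a_n$.

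The main obstacle I anticipate is precisely this last transfer $\ell^*(|b_n|)\sim \ell^*(a_n)$: although both $a_n$ and $|b_n|$ are regularly varying of index $1$, their ratio tends to infinity, so one must carefully check that $\ell^*$ absorbs this mild growth. This is a standard phenomenon for slowly varying functions decaying to $0$ sufficiently slowly, and follows either directly from the representation theorem or by citing results from \cite{BGT89,Ber17}. All other steps are routine, amounting to Kemperman's formula combined with Karamata-type summation applied to Berger's local estimate.
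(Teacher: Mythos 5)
Your proof is correct and follows essentially the same route as the paper: Kemperman's formula combined with Berger's local estimate \eqref{eq:equivlocal} for $\P(\zeta=n)$, Karamata summation for $\P(\zeta\geq n)$, and then reading off $\P(T_1>n)$ from \cref{prop:laddertimes}(i) (the paper packages this last step via \cref{cor:utile}(iii), i.e.\ $\P(X\geq|b_n|)\sim\P(\zeta\geq n)\P(T_1>n)$, while you match $\Lambda$ directly, but the algebra is identical). The final equivalence $\ell^*(|b_n|)\sim\ell^*(a_n)$ that you flag as a potential obstacle is in fact already contained in \eqref{eq:bn}, so no extra work with the representation theorem is needed.
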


\begin{proof}
The first estimate readily follows from \eqref{eq:equivlocal}, since $\P(\zeta=n)= \frac{1}{n} \P(W_{n}=-1)$ by Kemperman's formula (see e.g.~\cite[Section~6.1]{Pit06}). 

For $\P(\zeta \geq n)$, note that $|b_{n}| \sim  n \ell^{*}(|b_{n}|) \sim n \ell^{*}(a_{n})$  (by \cite[ Lemma 7.3 \& Lemma 4.3]{Ber17}), so we have $\P(\zeta=n) \sim \frac{1}{n^{2}} \cdot \frac{L(|b_{n}|)}{\ell^{*}(|b_{n}|)^{2}}$. Since moreover $\frac{L(|b_{n}|)}{\ell^{*}(|b_{n}|)^{2}}$ is slowly varying, we get by \cite[Proposition 1.5.8]{BGT89} that
\[\P(\zeta \geq n)  \quad \mathop{\sim}_{n \rightarrow \infty} \quad  \frac{1}{n}\cdot \frac{L(|b_{n}|)}{\ell^{*}(|b_{n}|)^{2}} \sim  \frac{n L(|b_{n}|)}{b_{n}^{2}}.\]

The second assertion follows from the first and the fact that $\P(T_{1}>n) \sim \frac{\P(X \geq |b_{n}|)}{\P(\zeta \geq n)}$ by \cref{cor:utile} $(iii)$.\end{proof}

\begin{remark}
\label{rem:constants}
In the specific case where $\P(X=n) \sim \frac{L(n)}{n^{2}}$ as $n \rightarrow \infty$, one may thus take $\Lambda(n)= \frac{1}{\ell^{*}(a_{n})}$ or  $\Lambda(n)= \frac{1}{\ell^{*}(|b_{n}|)}$.
\end{remark}

The following example follows from \cref{lem:estimateslocal} and may help to visualize the different orders of magnitude.

\begin{example}\label{ex:1} Assume that $\mu(n) \sim \frac{c}{n^{2} \ln(n)^{2}}$ as $n \rightarrow \infty$. Then, as $n\rightarrow \infty$,
\begin{equation*}
a_{n} \sim \frac{cn}{\ln(n)^{2}}, \quad b_{n} \sim - \frac{cn}{\ln(n)}, \quad \P(|X| \geq b_{n}) \sim \frac{1}{n \ln(n)}, \quad \P(\zeta \geq n) \sim \frac{1}{c^{2} n}, \quad \P(T_{1}>n) \sim \frac{c^{2}}{\ln(n)}.
\end{equation*}
\end{example}

\section{Cauchy random walks: local conditioning}
\label{sec:local}

The ultimate goal of this section is to study a $\BGW_\mu$ tree $\Tn$ conditioned to have $n$ vertices, when the offspring distribution $\mu$ satisfies \eqref{eq:hypmustar}.

\smallskip

To this end, we consider a random walk $(W_i : i\geq 0)$ whose increments satisfy assumption \eqref{eq:hypXstar}. We aim at studying the behaviour of the \textit{excursion} $(W^{(n)}_{i} : i \geq 0)$, whose law is that of the random walk $(W_{i} : i \geq 0)$ under the conditional probability $ \P( \, \cdot \, | \zeta = n)$, and which is also the {\L}ukasiewicz path of the random tree $\Tn$. (Note that our assumptions imply that $\P(\zeta=n)>0$ for every $n$ sufficiently large, see \cref{lem:estimateslocal}). 

More precisely, we shall couple with high probability the trajectory $(W^{(n)}_{i} : i \geq 0)$ with that of a random walk conditioned to be nonnegative for a random number of steps (whose number converges in probability to $\infty$ as $n \rightarrow \infty$), followed by an independent ``big jump'', and then followed by an independent unconditioned random walk.  This allows us to obtain a functional invariance principle for $\Wn$ which is of independent interest (\cref{thm:RW-local}).

We will use the notation and results of Section \ref{sec:Estimates}.

\subsection{Bridge conditioning}

In order to study the excursion $\Wn$, we start with some results on the \textit{bridge} that has the law of $(W_i : 0\leq i \leq n)$ under the probability measure $\P(\ \cdot \ | W_{n}=-1)$. Recall that $(X_i : i\geq 1)$ is a sequence of i.i.d.\ variables distributed as $X$ and for every $n\in\N$, let 
\[V_n \coloneqq \inf \left\{1\leq j \leq n : X_j=\max\{X_i : 1\leq i \leq n\} \right\}\]
be the first index of the maximal element of $(X_{1}, \ldots,X_{n})$. Then, we denote by $(X^{(n)}_{1}, \ldots,X^{(n)}_{n-1})$ a random variable distributed as $(X_{1},\ldots,X_{V_{n}-1},X_{V_{n}+1} \ldots,X_{n})$ under $\P(\ \cdot \ | W_{n}=-1)$.

\begin{proposition}\label{prop:dTV1}
We have
\[d_{\mathrm{TV}}\left( \left(\Xn_{i} : 1 \leq i  \leq n-1 \right),  \left(X_{i} : 1 \leq i \leq n-1 \right) \right)  \quad \xrightarrow[n\to\infty]{}  \quad 0,\]
where $d_{\mathrm{TV}}$ denotes the total variation distance on $\R^{n-1}$ equipped with the product topology.
\end{proposition}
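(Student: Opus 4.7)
The plan is to follow the Armend\'ariz--Loulakis approach \cite{AL11} adapted to the Cauchy setting. The heuristic: since $b_n \to -\infty$ while we condition on $W_n = -1$, the walk must perform a single exceptional positive jump of order $|b_n|$ to compensate; removing it should leave a nearly i.i.d.\ sample. Making this precise amounts to computing the Radon--Nikodym derivative of the law of $(\Xn_i)_{i=1}^{n-1}$ with respect to the i.i.d.\ law of $(X_i)_{i=1}^{n-1}$.

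Partitioning according to the position of $V_n$, for any $\vec{x} = (x_1,\ldots,x_{n-1})$, setting $s := \sum_j x_j$ and $y := -1-s$, one obtains
\[
h_n(\vec{x}) \;=\; \frac{\mu(y)\cdot N(\vec{x}, y)}{\P(W_n = -1)},
\]
where $N(\vec{x}, y)$ is the number of positions $k \in \{1,\ldots,n\}$ at which inserting $y$ into $\vec{x}$ makes the first maximum of the resulting length-$n$ sequence fall at position $k$. Explicitly, $N(\vec{x}, y) = n$ when $y > \max_j x_j$, $N(\vec{x}, y) = \min\{j : x_j = y\}$ when $y = \max_j x_j$, and $N(\vec{x}, y) = 0$ otherwise. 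Since $h_n$ is the density of one probability measure with respect to another, $\E[h_n] = 1$, and using $(h_n-1)^- \leq 1$ together with $\E[(h_n-1)^-] = \E[(h_n-1)^+]$ and dominated convergence, it suffices to prove that $h_n(X_1,\ldots,X_{n-1}) \to 1$ in probability under the unconditioned law. This avoids any separate uniform integrability argument, which is one of the pleasant features of working with an exact Radon--Nikodym derivative.

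For the convergence in probability I would combine two ingredients. First, the functional invariance principle \eqref{eq:cvfonctionnelle} together with $a_n = o(|b_n|)$ yields $S_{n-1}/b_n \to 1$ and $\max_{j \leq n-1} X_j / |b_n| \to 0$ in probability, so with probability tending to $1$ one has $-1 - S_{n-1} > \max_j X_j$, hence $N = n$. Second, the Berger local limit estimate~\eqref{eq:equivlocal} gives $\P(W_n = -1) \sim n\, L(|b_n|)/b_n^2$, and the assumption $\mu(k) \sim L(k)/k^2$ combined with the uniform convergence theorem for slowly varying functions \cite[Thm.~1.2.1]{BGT89} upgrades $(-1 - S_{n-1})/|b_n| \to 1$ in probability to $\mu(-1 - S_{n-1}) \sim L(|b_n|)/b_n^2$ in probability. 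Multiplying these contributions gives $h_n \to 1$ in probability.

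The main obstacle is the combinatorial bookkeeping that produces the exact formula for $h_n$, in particular the handling of ties between the inserted value and the maximum of $\vec{X}$; fortunately, these ties occur with asymptotically vanishing probability because $\mu(y) \to 0$ as $y \to \infty$ and the relevant values of $y$ are of order $|b_n|$. Once the formula is in hand, the analytic part is a direct, essentially local, consequence of~\eqref{eq:equivlocal} and the slow variation of $L$.
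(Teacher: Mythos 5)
Your proposal is correct, and it follows the same Armend\'ariz--Loulakis philosophy as the paper (identify the exceptional jump at the position of the maximum, factor it out, and use Berger's local limit estimate $\P(W_n=-1)\sim nL(|b_n|)/b_n^2$ together with regular variation), but the bookkeeping is organised quite differently. The paper proves two one-sided inequalities by restricting to a ``good'' event
\[
G_n(K)=\Bigl\{\bigl|W_{n-1}+|b_n|\bigr|\le Ka_n,\ \max_{j\le n-1}X_j<|b_n|-1-Ka_n\Bigr\},
\]
shows $|\P(\Xn\in A)-\P(\mathrm{iid}\in A)|\le\varepsilon_n+\P(\overline{G_n(K)})$, and then sends $K\to\infty$ after $n\to\infty$. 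You instead write the exact Radon--Nikodym derivative $h_n$ of the law of $(\Xn_i)$ with respect to the i.i.d.\ law, using that $N(\vec x,y)$ counts the admissible insertion positions of the removed maximum (your case analysis, including ties when $y=\max_j x_j$, is right), and then exploit the identity $d_{\mathrm{TV}}=\E[(h_n-1)^-]$ with the automatic bound $(h_n-1)^-\le 1$, so that bounded convergence kicks in once $h_n\to 1$ in probability. The analytic part of your argument (that $y=-1-W_{n-1}\sim|b_n|$ and $\max_j X_j=o_P(|b_n|)$ force $N=n$ w.h.p., and that $\P(X=y)/\P(X=|b_n|)\to 1$ in probability by the uniform convergence theorem for slowly varying functions) uses exactly the same estimates as the paper, but avoids the nuisance of the auxiliary parameter $K$. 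The trade-off is that your route requires nailing down the combinatorics of $N(\vec x,y)$ exactly, whereas the paper only needs a one-sided count (the union of disjoint events with a strict margin $Ka_n$); both are fine here, and your version is arguably the cleaner of the two and closer in spirit to the original Armend\'ariz--Loulakis argument. One cosmetic slip: you write $\mu(y)$ where the step distribution gives $\P(X=y)=\mu(y+1)$; harmless since only the asymptotics $\P(X=y)\sim L(y)/y^2$ are used.
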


We refer to \cite{Lin92} or \cite[Section~2]{dHo12} for background concerning the total variation distance. The proof is inspired from that of \cite[Theorem 1]{AL11}. Since the context is different, we give a detailed proof.

\begin{proof}  For every $A \in \mathcal{B}(\R^{n-1})$, note that  $\P( (X_{1},\ldots,X_{V_{n}-1},X_{V_{n}+1} \ldots,X_{n}) \in A, W_{n}=-1)$ is bounded for $n$ sufficiently large from below by the probability of the event
\begin{align*}
& \bigcup_{i=1}^{n} \biggl\{ \left(X_{1}, \ldots, X_{i-1}, X_{i+1}, \ldots,X_{n-1}\right) \in A,    \Bigl|\sum_{{1 \leq j \leq n , j \neq i}} X_{j}+|b_{n}| \Bigr|\leq K a_{n},\\
 &  \qquad  \qquad  \qquad  \qquad  \qquad \max_{{1 \leq j \leq n , j \neq i}} X_{j} < |b_{n}|-1-K a_{n},W_n=-1 \biggr\},
\end{align*}
 where $K>0$ is an arbitrary constant and the events appearing in the union are disjoint
By cyclic invariance of the law of $(X_{1}, \ldots,X_{n})$, we get that $\P( (X_{1},\ldots,X_{V_{n}-1},X_{V_{n}+1} \ldots,X_{n}) \in A, W_{n}=-1)$  is bounded from below by
\begin{align*}
 n\P \biggl((X_{1}, \ldots,X_{n-1}) \in A, &\Bigl| W_{n-1}+|b_{n}|\Bigr| \leq K a_{n},\\
 & \max_{1 \leq j \leq n-1} X_{j}<|b_{n}|-1-K a_{n}, W_{n}=-1\biggr).
 \end{align*} Let us introduce the event
\[G_{n}(K)\coloneqq \left\{ \Bigl| W_{n-1}+|b_{n}|\Bigr| \leq K a_{n}, \max_{1 \leq j \leq n-1} X_{j}<|b_{n}|-1-K a_{n} \right\}.\] Since $\P(X=n)$ is regularly varying, observe that 
\[\P(X= -1-k_{n})  \quad \mathop{\sim}_{n \rightarrow \infty} \quad   \P(X=|b_{n}|)\]
uniformly in $k_{n}$ satisfying $|k_{n}-|b_{n}|| \leq K a_{n}$ (because $a_{n}/|b_{n}| \rightarrow 0$). Moreover, by \eqref{eq:equivlocal} we have that $\P(W_{n}=-1) \sim n\P(X=|b_{n}|)$. Therefore, there exists a sequence $\varepsilon_{n} \rightarrow 0$ such that
\begin{align*}
&\P\left( \left(X^{(n)}_{1}, \ldots,X^{(n)}_{n-1}\right) \in A \right)\\
& \qquad  \qquad  \qquad  \qquad  \geq  (1-\varepsilon_{n}) \P \left((X_{1}, \ldots,X_{n-1}) \in A, G_{n}(K)\right)\\
& \qquad  \qquad  \qquad  \qquad \geq  (1-\varepsilon_{n}) \left(\P ((X_{1}, \ldots,X_{n-1}) \in A) -\P\left(\overline{G_{n}(K)}\right)\right).
\end{align*}
Hence
\begin{align*}
&\P\left(\left(X^{(n)}_{1}, \ldots,X^{(n)}_{n-1}\right) \in A\right)- \P((X_{1}, \ldots,X_{n-1}) \in A) \\
& \qquad   \qquad \geq - \varepsilon_{n}  \P ((X_{1}, \ldots,X_{n-1}) \in A)  -(1-\varepsilon_{n}) \P\left(\overline{G_{n}(K)}\right).
\end{align*}
By writing this inequality with $\overline{A}$ instead of $A$, we get that 
\begin{align*}
& \left|\P\left(\left(X^{(n)}_{1}, \ldots,X^{(n)}_{n-1}\right) \in A \right)- \P((X_{1}, \ldots,X_{n-1}) \in A) \right| \\
&\qquad  \qquad  \qquad  \qquad  \qquad \qquad \qquad \qquad  \leq \varepsilon_{n}+ \P\left(\overline{G_{n}(K)}\right).
\end{align*}
It therefore remains to check that 
\[\limsup_{K \rightarrow \infty} \limsup_{n \rightarrow \infty}  \P\left(\overline{G_{n}(K)}\right)=0.\]
To this end, first notice that by \eqref{eq:cvfonctionnelle},
\[ \P\left( \Bigl| W_{n-1}+|b_{n}|\Bigr| > K a_{n} \right)  \quad \xrightarrow[n\to\infty]{} \quad \P( \mathcal{C}_{1}>K), \]
where $ \mathcal{C}$ is an asymmetric Cauchy process. Since $  \mathcal{C}_{1}$ is almost surely finite, we have $\P( \mathcal{C}_{1}>K) \rightarrow 0$ as $K \rightarrow \infty$. Second, write
\[\P \left( \max_{1 \leq j \leq n-1} X_{j} \geq |b_{n}|-1-K a_{n}\right)=1- \left( 1-\P(X \geq  |b_{n}|-1-K a_{n})) \right)^{n-1}.\]
But
\[(n-1) \P(X \geq  |b_{n}|-1-K a_{n}) \sim  \frac{nL( |b_{n}|)}{ |b_{n}|}  \sim \frac{L(|b_{n}|)}{\ell^{*}(|b_{n}|)} \quad \xrightarrow[n\to\infty]{} \quad 0.\]
This completes the proof.
\end{proof}

\subsection{Excursion conditioning}
\label{sec:excursioncond}

We now deduce from Proposition \ref{prop:dTV1} a result on the excursion $W^{(n)}$. To do so, we will use the so-called Vervaat transform $\mathcal{V}$ which is defined as follows. Let $n\in\N$, $(x_1,\ldots, x_n) \in \Z^n$ and let $\mathbf{w}=(w_i : 0 \leq i \leq n)$ be the associated walk defined by
\[w_0=0 \quad \text{and} \quad w_i=\sum_{j=1}^i x_j, \quad 1\leq i \leq n.\] We also introduce the first time at which $(w_i : 0\leq i \leq n)$ reaches its overall minimum,
\[k_n:=\min\{0\leq i \leq n : w_i=\min\{w_j : 0\leq j \leq n\}\}.\]
The Vervaat transform $\mathcal{V}(\mathbf{w}) \coloneqq (\mathcal{V}(\mathbf{w})_i : 0\leq i \leq n)$ of $\mathbf{w}$ is the walk obtained by reading the increments $(x_1,\ldots, x_n)$ from left to right in cyclic order, started from $k_n$. Namely,  \[\mathcal{V}(\mathbf{w})_0=0 \quad \text{and} \quad \mathcal{V}(\mathbf{w})_{i+1}-\mathcal{V}(\mathbf{w})_{i}= x_{k_n+i \mod[n]}, \quad 0\leq i < n,\] see Figure \ref{fig:Vervaat} for an illustration. 

Recall that $(W_i : i \geq 0)$ is a random walk with increments distributed as $X$, and for every $n\in\N$ define the random process $\Zn \coloneqq (\Zn_{i} : 0\leq i  \leq n )$  by 
\begin{equation}\label{eqn:ZnVervaat}
	\Zn \coloneqq \mathcal{V}(W_0,W_1,\ldots,W_{n-1},-1).
\end{equation} 

The next result shows that $(W^{(n)}_{i} : 0\leq i  \leq n)$ and $(\Zn_{i} : 0\leq i  \leq n )$ are close in the total variation sense when $n$ goes to infinity, where we recall that $(W^{(n)}_{i} : 0\leq i  \leq n)$ is the random walk $(W_{i} : i \geq 0)$ under the conditional probability   $ \P( \, \cdot \, | \zeta = n)$.

\begin{theorem}\label{thm:dTVLoc}
We have
\[d_{\mathrm{TV}}\left( \left(W^{(n)}_{i} : 0\leq i  \leq n \right),  \left(\Zn_{i} : 0\leq i  \leq n \right) \right)  \quad \xrightarrow[n\to\infty]{}  \quad 0,\]
where $d_{\mathrm{TV}}$ denotes the total variation distance on $\R^{n+1}$ equipped with the product topology.
\end{theorem}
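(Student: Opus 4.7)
The plan is to combine Proposition~\ref{prop:dTV1} with two classical properties of the Vervaat transform $\mathcal{V}$. First, the discrete Vervaat identity: because $X\geq-1$ almost surely, the cycle lemma gives that any walk $(W_0,\ldots,W_n)$ with $W_n=-1$ admits a unique cyclic shift turning it into an excursion (one staying nonnegative on $\{0,\ldots,n-1\}$), and this shift is precisely the one defining $\mathcal V$. Consequently, if $\mathbf B$ has the bridge law $\mathcal{L}((W_0,\ldots,W_n)\mid W_n=-1)$, then $\mathcal V(\mathbf B)$ has the law of $(W_i^{(n)}:0\le i\le n)$. Second, $\mathcal V$ depends only on the cyclic equivalence class of the increment sequence: cyclically rotating the increments rotates the first-minimum index by the same amount and leaves the resulting walk unchanged.

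Given these two inputs, I would couple the bridge and the pseudo-bridge as follows. Sample $(X_1,\ldots,X_{n-1})$ i.i.d.\ of law $\mathcal{L}(X)$, set $Y_n\coloneqq -1-W_{n-1}$, and observe that the walk $(W_0,\ldots,W_{n-1},-1)$ used to define $Z^{(n)}$ has exactly these $n$ increments. Proposition~\ref{prop:dTV1} provides a total-variation coupling of the bridge $\mathbf X^{(b)}$ and the i.i.d.\ sample such that, with probability $1-o(1)$,
\[
\bigl(X_1^{(b)},\ldots,X_{V_n-1}^{(b)},X_{V_n+1}^{(b)},\ldots,X_n^{(b)}\bigr)=(X_1,\ldots,X_{n-1}).
\]
On this event the missing coordinate $X_{V_n}^{(b)}$ is forced by the bridge constraint $\sum_i X_i^{(b)}=-1$ to equal $-1-W_{n-1}=Y_n$, so the increment sequence of the bridge is obtained from $(X_1,\ldots,X_{n-1},Y_n)$ by a single cyclic rotation (the one sending position $n$ to position $V_n$). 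Cyclic invariance of $\mathcal V$ then gives $\mathcal V(\mathbf B)=\mathcal V(W_0,\ldots,W_{n-1},-1)$ on this good event, i.e.\ $W^{(n)}=Z^{(n)}$ off a set of probability $o(1)$, which is the desired total-variation convergence.

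The main technical step is setting up the coupling: Proposition~\ref{prop:dTV1} controls only the $(n-1)$-dimensional marginal of the bridge with its maximum removed, whereas one also needs to match the position $V_n$ of the maximum. Here I would exploit cyclic exchangeability of the bridge to argue that $V_n$ is uniform on $\{1,\ldots,n\}$ up to a negligible bias, and asymptotically independent of the remaining increments. The key quantitative input is that under~\eqref{eq:hypXstar} the maximum has size of order $|b_n|$ while the second-largest increment is $O(a_n)=o(|b_n|)$, so the maximum is unique and uniquely identifiable with probability $1-o(1)$, which eliminates any bias from the convention that $V_n$ denotes the first maximizer. Combined with Proposition~\ref{prop:dTV1}, this yields a joint coupling of $(V_n,\mathbf X^{(b)})$ with a uniform $V$ on $\{1,\ldots,n\}$ and an independent i.i.d.\ sample $(X_1,\ldots,X_{n-1})$ whose total-variation distance vanishes, at which point the cyclic-shift argument above applies and concludes the proof.
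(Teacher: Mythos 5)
There is a genuine gap in the cyclic-rotation step. On your coupling event the bridge increments are $(X_1,\ldots,X_{V_n-1},\,Y_n,\,X_{V_n},\ldots,X_{n-1})$, i.e.\ the big jump $Y_n$ is \emph{inserted} at position $V_n$ while the small increments keep their linear order on either side. That sequence is a permutation of $(X_1,\ldots,X_{n-1},Y_n)$, but it is \emph{not} a cyclic rotation of it unless $V_n=n$: a cyclic rotation of $(X_1,\ldots,X_{n-1},Y_n)$ placing $Y_n$ at position $V_n$ would be $(X_{n-V_n+1},\ldots,X_{n-1},Y_n,X_1,\ldots,X_{n-V_n})$, which wraps the small increments around $Y_n$ in a different order. (Try $n=3$, $V_n=2$: the bridge reads $(X_1,Y_n,X_2)$, while the cyclic rotations of $(X_1,X_2,Y_n)$ are $(X_1,X_2,Y_n)$, $(X_2,Y_n,X_1)$, $(Y_n,X_1,X_2)$.) Consequently $\mathcal V(\mathbf B)$ and $\mathcal V(W_0,\ldots,W_{n-1},-1)$ do \emph{not} coincide on the coupling event, and the asserted coupling of $W^{(n)}$ and $Z^{(n)}$ fails. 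Your closing paragraph about tracking $V_n$ does not repair this: even with $V_n$ matched to a uniform random variable, the insertion sequence above is still not a cyclic rotation of the pseudo-bridge, and you also gain nothing by knowing $V_n$ since $\mathcal V$ is blind to where along the cycle you start.

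The paper sidesteps this exact issue by introducing the \emph{shifted bridge} $R^{(n)}$, whose increments are the bridge increments read cyclically starting just after $V_n^b$ (so the big jump ends up last). By construction $R^{(n)}$ is a cyclic rotation of $B^{(n)}$, hence $\mathcal V(R^{(n)})=\mathcal V(B^{(n)})$ pointwise, and—this is the distributional input one must supply—the first $n-1$ increments of $R^{(n)}$ have the same law as the tuple ``bridge increments with the maximum removed in linear order''. That identity uses (full) exchangeability of the bridge increments: $V_n^b$ is independent of the remaining increments, and both ``read cyclically from $V_n^b+1$'' and ``remove $V_n^b$ and read linearly'' produce the same law. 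Proposition~\ref{prop:dTV1} then shows $(R^{(n)}_i)_{0\le i<n}$ is close in total variation to $(W_i)_{0\le i<n}$; since the last point is $-1$ in both cases and $\mathcal V$ is measurable, the conclusion follows. You would need to insert this re-definition (couple to the cyclically shifted bridge, not to the bridge itself) to make your argument sound; the remaining ingredients you identify (Vervaat identity, Proposition~\ref{prop:dTV1}, cyclic invariance of $\mathcal V$) are the right ones.
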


\begin{proof} Throughout the proof, we let $\Bn \coloneqq (\Bn_{i} : 0\leq i  \leq n)$ be a bridge of length $n$, that is, a process distributed as $(W_{i} : 0\leq i  \leq n)$ under $\P(\ \cdot \ | W_{n}=-1)$.  For every $0\leq i < n$, we denote by $b^{(n)}_i := \Bn_{i+1}-\Bn_{i}$ the $i$-th increment of the bridge. We will need the first time at which $(\Bn_{i} : 0\leq i  \leq n)$ reaches its largest jump, defined by
\[V_n^b \coloneqq \inf\left\{0\leq i < n : b^{(n)}_i=\max\left\{b^{(n)}_j : 0\leq j < n\right\} \right\}.\] Without loss of generality, we assume that the largest jump of $B^{(n)}$ is reached once. We finally introduce the shifted bridge $\Rn\coloneqq (\Rn_{i} : 0\leq i  \leq n)$, obtained by reading the jumps of the bridge $B^{(n)}$ from left to right starting from $V^b_n$. Namely, we set \[\Rn_0=0 \quad \text{and} \quad r^{(n)}_i \coloneqq \Rn_{i+1}-\Rn_{i}= b^{(n)}_{V_n^b+i+1 \mod[n]}, \quad 0\leq i < n,\] see  Figure \ref{fig:Vervaat} for an illustration. 

\begin{figure}[h!]
	\centering
	\includegraphics[width=\linewidth]{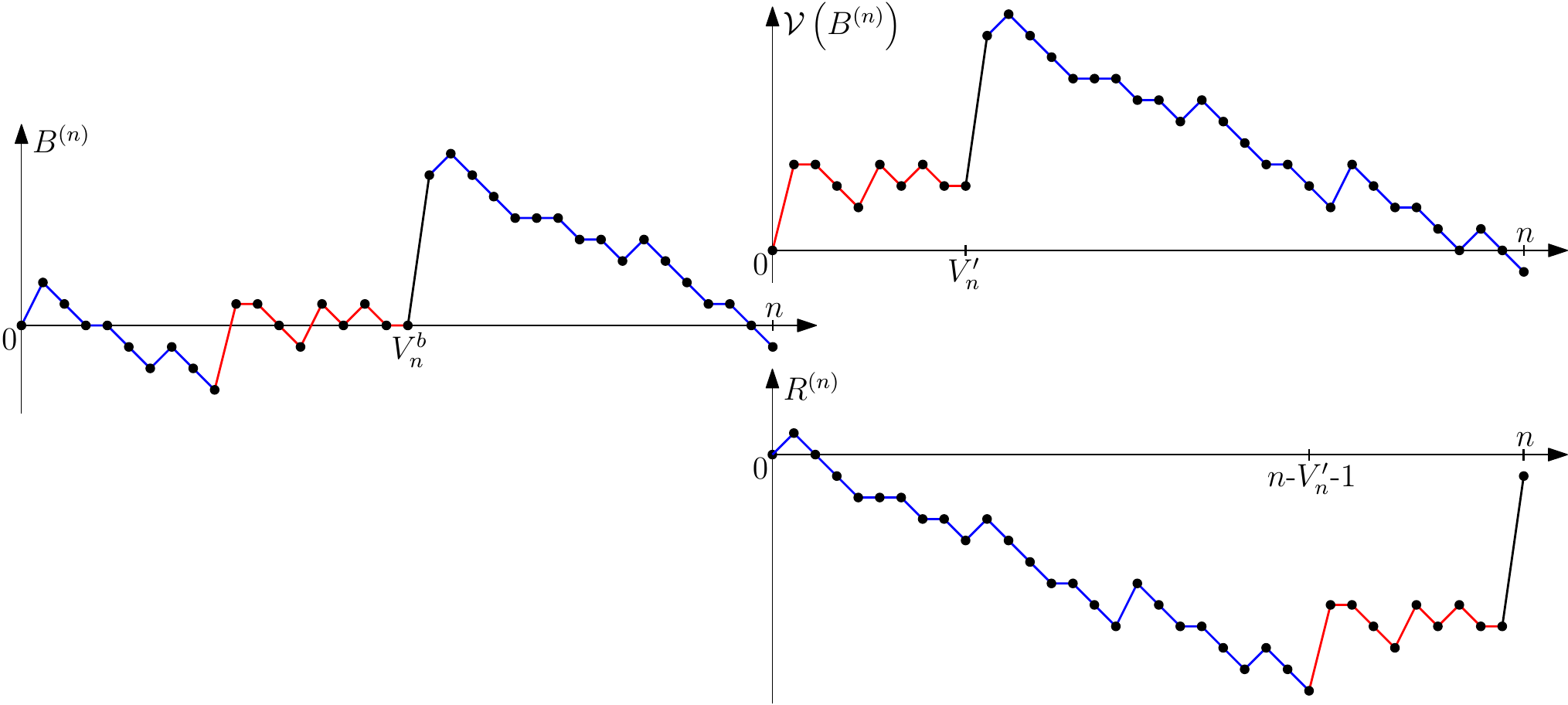}
	\caption{The bridge $B^{(n)}=(B^{(n)}_{i} : 0\leq i  \leq n)$ with the location $V_{n}^{b}$ of its (first) maximal jump, its Vervaat transform $ \mathcal{V}(\Bn) $  with the location $V_{n}'$ of its (first) maximal jump, and the shifted bridge $R^{(n)}=(R^{(n)}_{i} : 0\leq i  \leq n)$ with the location of its first overall minimum.}
	\label{fig:Vervaat}
\end{figure}

Since $V^b_n$ is independent of $(b^{(n)}_0,\ldots,b^{(n)}_{V_n^b-1},b^{(n)}_{V_n^b+1},\ldots,b^{(n)}_{n-1})$, we have 
\[\left( r^{(n)}_i : 0\leq i <n-1 \right)=\left( b^{(n)}_{V_n^b+i+1 \mod[n]} : 0\leq i <n-1 \right)\overset{(d)}{=}\left(X^{(n)}_{1}, \ldots,X^{(n)}_{n-1}\right).\] 
One can then apply Proposition \ref{prop:dTV1} to get that 
\[d_{\mathrm{TV}}\left( \left(\Rn_{i} : 0\leq i <  n \right),  \left(W_{i} : 0\leq i  < n \right) \right)  \quad \xrightarrow[n\to\infty]{}  \quad 0.\] 

We now use the Vervaat transform. By construction, $\mathcal{V}(\Rn)= \mathcal{V}(\Bn) $ (see Figure \ref{fig:Vervaat}), and $ \mathcal{V}(\Bn) $  has the same distribution as the excursion $(W^{(n)}_{i} : 0\leq i  \leq n)$, see for instance \cite[Section 5]{Pit06}. Since $\Rn_n=-1$ and $\Zn=\mathcal{V}\left(W_0, \ldots,W_{n-1},-1\right)$ by definition, this concludes the proof. \end{proof}

Let us denote by $V^z_n$ the index of the first largest jump of $\Zn$, 
\[V_n^z \coloneqq \inf\left\{0\leq i < n : \Zn_{i+1}-\Zn_{i}=\max\left\{\Zn_{j+1}-\Zn_{j} : 0\leq j < n\right\} \right\}.\] Then, one can identify with high probability the law of $\Zn$ until time $V^z_{n}$ as follows. Let us denote by $(\widehat{W}_{i} :0 \leq i < n)$ the time reversed random walk defined by $\widehat{W}_{i}=W_{n-1}-W_{n-1-i}$ for $0 \leq i < n$, and let $\widehat{I}_{n-1}$  be the last weak ladder time of  $(\widehat{W}_{i} :0 \leq i < n)$. We have the following result.

\begin{corollary}
\label{cor:timereversal}
Let $\mathcal{E}_{n}$ be the event $\mathcal{E}_{n}\coloneqq \left\lbrace \max\left\lbrace X_i : 1\leq i < n \right\rbrace < -1-W_{n-1} \right\rbrace$. Then, 
\begin{enumerate}
\item[(i)]   We have $\P( \mathcal{E}_{n}) \rightarrow 1$ as $n \rightarrow \infty$;
\item[(ii)] On the event $ \mathcal{E}_{n}$, we have $(\Zn_{i} : 0\leq i  \leq V^z_{n})= (\widehat{W}_{i} : 0\leq i  \leq \widehat{I}_{n-1})$.
\end{enumerate}
\end{corollary}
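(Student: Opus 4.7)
The plan for (i) is to show that, on the appropriate scale, $-1-W_{n-1}$ dominates $\max_{1\le i < n}X_i$ with high probability. From the one-dimensional convergence $(W_{n-1}-b_{n-1})/a_n \xrightarrow[n\to\infty]{(d)} \mathcal{C}_1$, together with $b_{n-1}\sim b_n$ (by regular variation of index~$1$) and $a_n=o(|b_n|)$, I would deduce $W_{n-1}/b_n \to 1$ in probability, and hence $-1-W_{n-1}\ge |b_n|/2$ with high probability. For the maximum, a union bound combined with \eqref{eq:bn} gives
\[
\P\bigl(\max_{1\le i<n}X_i \ge |b_n|/2\bigr)\le n\,\P(X\ge |b_n|/2)\sim \frac{2L(|b_n|)}{\ell^*(|b_n|)}\xrightarrow[n\to\infty]{} 0,
\]
since $L/\ell^*\to 0$ (as used at the end of the proof of \cref{prop:dTV1}). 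Combining these two facts yields $\P(\mathcal{E}_n)\to 1$.

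For (ii), I would start from two observations about $\mathcal{E}_n$. First, the increment $-1-W_{n-1}$ of the extended walk $(W_0,\ldots,W_{n-1},-1)$ is strictly larger than every $X_i$, so it is the unique maximal increment. Second, combining $-1\le X_i < -1-W_{n-1}$ with the fact that the $X_i$'s and $W_{n-1}$ are integers forces $W_{n-1}\le -1$. Hence the appended value $-1$ does not alter the minimum of $(W_0,\ldots,W_{n-1})$, so the first index $k_n$ of the global minimum of the extended walk coincides with the first index of the global minimum of $W$ on $[0,n-1]$. By the definition of the Vervaat transform, the big jump of $Z^{(n)}$ is then placed at position $V^z_n=n-1-k_n$, and the initial portion $(Z^{(n)}_i)_{0\le i\le V^z_n}$ is a specific reparametrization of the walk segment of $W$ between times $k_n$ and $n-1$.

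To identify $\widehat{I}_{n-1}$, I would apply the time-reversal argument from the proof of \cref{lem:In}: $j$ is a weak ladder time of $\widehat{W}$ if and only if $W_{n-1-j}\le W_m$ for every $m\in[n-1-j,n-1]$, so the largest such $j$ equals $n-1$ minus the first index of the overall minimum of $W$ on $[0,n-1]$, giving $\widehat{I}_{n-1}=n-1-k_n=V^z_n$. The pointwise identification of the two initial segments then follows by unfolding both path transformations simultaneously and observing that they both encode the same walk fragment of $W$ between times $k_n$ and $n-1$, up to the orientation prescribed by the Vervaat transform.

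The main obstacle will be (ii), specifically the precise matching of the two initial sequences $(Z^{(n)}_i)_{0\le i\le V^z_n}$ and $(\widehat{W}_i)_{0\le i\le \widehat{I}_{n-1}}$. The identification of $V^z_n$ and $\widehat{I}_{n-1}$ with $n-1-k_n$ is a natural consequence of locating $k_n$ on $\mathcal{E}_n$, but the bookkeeping that tracks how the cyclic shift defining $Z^{(n)}$ and the time reversal defining $\widehat{W}$ combine demands care. Part (i) is, by comparison, a standard scale-separation argument.
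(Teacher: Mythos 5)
For part (i), your argument is correct and essentially parallels the paper's: you replace the use of the functional convergence \eqref{eq:cvfonctionnelle} together with the $J_{1}$-continuity of the largest-jump functional (which the paper uses to get that $a_n^{-1}\max_{1\le i<n}X_i$ has a non-degenerate limit) by the direct union bound $\P(\max_{1\le i<n}X_i\geq |b_n|/2)\leq n\P(X\geq |b_n|/2)\to 0$; both hinge on the same scale separation $a_n=o(|b_n|)$ and on $(-1-W_{n-1})/|b_n|\to 1$ in probability.

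For part (ii), the preparatory steps are right and more explicit than the paper's one-line proof: on $\mathcal{E}_n$ the appended increment $-1-W_{n-1}$ is the unique largest step, $W_{n-1}\le -1$ by integrality, the first overall minimum $k_n$ of $(W_0,\dots,W_{n-1},-1)$ coincides with that of $(W_0,\dots,W_{n-1})$, and $V^z_n=n-1-k_n=\widehat{I}_{n-1}$. The problem is the final claim of a ``pointwise identification'': it is false as stated. Unwinding the Vervaat transform gives $\Zn_i=W_{k_n+i}-W_{k_n}$ for $0\le i\le V^z_n$, while $\widehat{W}_i=W_{n-1}-W_{n-1-i}$, so the two sequences read the same segment $(W_{k_n},\dots,W_{n-1})$ but in opposite directions; the correct path relation is the time-reversal
\[
\Zn_i \;=\; \widehat{W}_{\widehat{I}_{n-1}}-\widehat{W}_{\widehat{I}_{n-1}-i},
\qquad 0\le i\le V^z_n,
\]
not $\Zn_i=\widehat{W}_i$. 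For a concrete check, take $n=7$ with increments $(X_1,\dots,X_6)=(-1,-1,-1,1,-1,0)$: then $W_6=-3$, $\mathcal{E}_n$ holds, $k_n=3$, $V^z_n=\widehat{I}_{n-1}=3$, yet $(\Zn_0,\dots,\Zn_3)=(0,1,0,0)$ while $(\widehat{W}_0,\dots,\widehat{W}_3)=(0,0,-1,0)$. Your caveat ``up to the orientation prescribed by the Vervaat transform'' identifies exactly this subtlety, but it is inconsistent with asserting a ``pointwise identification''; you must carry the time-reversal forward. That is in fact what is used downstream: compare the proof of \cref{thm:RW-tail}, where the analogous statement is phrased directly as a time-reversal $\Zgn_{|[0,I_n]}$ having the same distribution as $W^{[I_n]}$ and the supremum is controlled via $\sup_{[0,I_n]}|\Zgn|\le 2\sup_{[0,I_n]}|W^{[I_n]}|$.
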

\begin{proof} The event $\mathcal{E}_{n}$ can be rephrased as the fact that the maximal jump of the process $(W_0,W_1,\ldots,W_{n-1},-1)$ is the last one. First, observe that the function convergence \eqref{eq:cvfonctionnelle} combined with the continuity of the largest jump for the Skorokhod $J_{1}$ topology implies that $ \frac{1}{a_{n}} \max_{1 \leq j < n} X_{k}$ converges in distribution to a non-degenerate random variable. Moreover, $\tfrac{1}{a_{n-1}}(W_{n-1}-b_{n-1})$ converges in distribution so that $ \frac{1}{|b_{n}|}(-1-W_{n-1})$ converges in distribution to $1$. The first assertion then follows from the fact that $a_{n}=o(|b_{n}|)$, while the second is a simple consequence of the definition \eqref{eqn:ZnVervaat} of $\Zn$.\end{proof}

We now establish a functional invariance principle for $\Wn$. We set $W^{(n)}_{k}=0$ for $k<0$ by convention.

\begin{theorem}
\label{thm:RW-local}
The convergence
\[ \left(  \frac{\Wn_{\lfloor n t \rfloor}}{|b_{n}|} : -1 \leq t \leq 1 \right)  \quad \xrightarrow[n\to\infty]{(d)}  \quad   \left( (1- t) \mathbbm{1}_{t \geq 0} : -1 \leq t \leq 1\right)\] holds in distribution in $\D([-1,1],\R)$.
  \end{theorem}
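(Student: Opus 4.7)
The plan is to transfer the problem to the explicit process $\Zn$ of \eqref{eqn:ZnVervaat} and exploit its structural decomposition. By \cref{thm:dTVLoc}, $d_{\mathrm{TV}}(\Wn,\Zn) \to 0$, so under any coupling realizing this total variation distance, $\Wn$ and $\Zn$ coincide with probability tending to $1$; it therefore suffices to prove the stated convergence with $\Zn$ in place of $\Wn$. All the analysis is then carried out on the event $\mathcal{E}_n$ of \cref{cor:timereversal}, whose probability tends to $1$.

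Let $k_n$ denote the first position at which the path $(W_0,W_1,\ldots,W_{n-1},-1)$ attains its minimum. On $\mathcal{E}_n$ the last increment $-1-W_{n-1}$ is the strict maximum, so by the Vervaat construction the index of the maximal jump of $\Zn$ is $V^z_n = n-1-k_n$, and $\Zn$ decomposes into a pre-jump phase $\Zn_i = W_{k_n+i}-W_{k_n}$ for $0 \leq i \leq V^z_n$, the big jump of size $-1-W_{n-1}$ at index $V^z_n$, and a post-jump phase $\Zn_{V^z_n+1+j} = -1-W_{k_n}+W_j$ for $0 \leq j \leq k_n$. From \eqref{eq:cvfonctionnelle} and $a_n/|b_n|\to 0$, Slutsky's lemma yields
\[
\sup_{t \in [0,1]} \left| \frac{W_{\lfloor nt \rfloor}}{|b_n|} + t \right| \quad \xrightarrow[n\to\infty]{\P} \quad 0,
\]
whence $W_{n-1}/|b_n|\to -1$, $\min_{[0,n-1]}W/|b_n|\to -1$, and by strict monotonicity of the limit $t\mapsto -t$, $k_n/n\to 1$ in probability, so $V^z_n/n\to 0$ (consistently with \cref{prop:In} applied to $\widehat{W}$). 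It follows that the big jump and the post-jump starting value satisfy $(-1-W_{n-1})/|b_n|\to 1$ and $(-1-W_{k_n})/|b_n|\to 1$, the pre-jump range $\max_{[k_n,n-1]}W - W_{k_n}$ is $o(|b_n|)$ in probability, and $\Zn_{\lfloor nt \rfloor}/|b_n|\to 1-t$ uniformly on $[\eps,1]$ for every $\eps>0$.

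The final step is Skorokhod $J_1$-convergence to the c\`adl\`ag limit $\phi(t)=(1-t)\mathbbm{1}_{t\geq 0}$, which has a single jump of height $1$ at $t=0$. It suffices to exhibit continuous increasing bijections $\lambda_n:[-1,1]\to[-1,1]$ fixing the endpoints with $\sup|\lambda_n-\mathrm{id}|\to 0$ in probability, under which $\Zn_{\lfloor n\lambda_n(t)\rfloor}/|b_n|$ converges to $\phi(t)$ uniformly; the piecewise-linear $\lambda_n$ sending $0$ to $V^z_n/n$ does this, since $V^z_n/n\to 0$. The main delicate point is precisely this reconciliation of the prelimit's single jump, located approximately near $0$, with the exact jump of $\phi$ at $0$; every other estimate is a direct consequence of the functional LLN for $W$ and the Vervaat decomposition.
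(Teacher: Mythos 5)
Your proof is correct and takes the same route as the paper's: reduce to $\Zn$ via \cref{thm:dTVLoc}, restrict to the event $\mathcal{E}_n$ of \cref{cor:timereversal}, exploit the three-phase Vervaat decomposition of $\Zn$, and invoke the functional convergence \eqref{eq:cvfonctionnelle}; your direct derivation of $V^z_n/n\to 0$ from the location $k_n$ of the first argmin is exactly the drift-based alternative that the paper itself mentions at the end of the proof of \cref{prop:In} $(ii)$, and your explicit $J_1$ time change spells out what the paper compresses into ``the desired result readily follows.'' One small slip worth flagging: the jump of $t\mapsto\Zn_{\lfloor nt\rfloor}/|b_n|$ occurs at $t=(V^z_n+1)/n$, so the time change $\lambda_n$ should send $0$ to $(V^z_n+1)/n$ rather than to $V^z_n/n$ (with $\lambda_n(0)=V^z_n/n$ one lands on the small pre-jump value, not on $\Zn_{V^z_n+1}\approx|b_n|$), but this is an off-by-one that does not affect the argument.
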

  
Here, we work with $\D([-1,1],\R)$ instead of  $\D([0,1],\R)$ since our limiting process almost surely
takes a positive value in $0$ (it ``starts with a jump''),
while $\Wn$ stays small for a positive time (see Figure \ref{fig:luka} for a simulation).

\begin{proof}
 By \cref{thm:dTVLoc}, it is enough to establish the result with $\Wn$ replaced with $\Zn= \mathcal{V}(W_0,W_1,\ldots,W_{n-1},-1)$. Recall that $V^z_{n}$ is the index of the first largest jump of $\Zn$. Thanks to \cref{cor:timereversal}, we can also assume without loss of generality that $\mathcal{E}_n$ is realized, so that
\begin{enumerate}
\item[--]  $(\Zn_{i} : 0\leq i  \leq V^z_{n})=(\widehat{W}_{i} : 0\leq i  \leq \widehat{I}_{n-1})$;
\item[--] $\Zn_{V^z_{n}+1}-\Zn_{V^z_{n}}=-1-W_{n-1}$;
\item[--] $(\Zn_{V^z_{n}+1+i}-\Zn_{V^z_{n}+1} :  0 \leq i  < n-V^z_{n})=(W_{i}: 0 \leq i < n-\widehat{I}_{n-1})$.
\end{enumerate}

Since $(\widehat{W}_{i} : 0 \leq i < n)$ and $(W_{i}: 0 \leq i <n)$ have the same distribution, by \cref{prop:In} $(ii)$ and \eqref{eq:cvfonctionnelle}  we have the convergences
\[ \frac{V^z_{n}}{n}  \quad \xrightarrow[n\to\infty]{(\P)}\quad  0,   \qquad  \frac{1}{|b_{n}|}\max_{0 \leq i \leq V^z_{n}} \left|\Zn_{i}\right| \quad \xrightarrow[n\to\infty]{(\P)}\quad  0  \quad \text{and} \quad  \frac{1}{|b_{n}|}\left(\Zn_{V^z_{n}+1}-\Zn_{V^z_{n}}\right) \quad \xrightarrow[n\to\infty]{(\P)}\quad  1\]
as well as the convergence in distribution in $\D([0,1],\R)$
\[ \left(  \frac{1}{|b_{n}|} \left({\Zn_{V_{n}+1+\lfloor n t \rfloor}}-\Zn_{V_{n}+1}\right) :0 \leq t  \leq 1 \right) \quad \xrightarrow[n\to\infty]{(d)} \quad \left( - t  : 0 \leq t \leq 1\right),\]
where we set $Z^{(n)}_{k}=0$ for $k>n$. The desired result readily follows.\end{proof}

\subsection{Applications: limit theorems for BGW trees}\label{ss:AppliGWloc}

Throughout this section, we let $\mu$ be an offspring distribution satisfying \eqref{eq:hypmustar}, and let $ \Tn$ be a $ \BGW_{ \mu}$ tree conditioned on having $n$ vertices. We now apply the results of the previous sections to the study of the tree $\Tn$.

First of all, we immediately obtain a limit theorem for the {\L}ukasiewicz path $\W(\Tn)$ by simply combining \cref{prop:GWRW} with \cref{thm:RW-local}. Also note that \cref{thm:dTVLoc} gives a simple and efficient way to asymptotically simulate $\Tn$.

\begin{proposition}
  \label{prop:luka-local}
The convergence
\[ \left(  \frac{\W_{\lfloor n t \rfloor}(\Tn)}{|b_{n}|} : -1 \leq t \leq 1 \right)  \quad \xrightarrow[n\to\infty]{(d)}  \quad   \left( (1- t) \mathbbm{1}_{t \geq 0} : -1 \leq t \leq 1\right)\] holds in distribution in $\D([-1,1],\R)$.
  \end{proposition}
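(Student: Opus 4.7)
The proof is essentially a direct combination of two ingredients already established. The strategy is:

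First, invoke Proposition \ref{prop:GWRW}. If $\mathcal{T}$ is a $\BGW_\mu$-tree and $(Y_i)_{i\geq 1}$ are i.i.d.\ with $\P(Y_1=i)=\mu(i+1)$, then the Łukasiewicz path $(\W_0(\mathcal{T}),\ldots,\W_{|\mathcal{T}|}(\mathcal{T}))$ has the same law as $(Y_0,Y_1,\ldots,Y_1+\cdots+Y_\zeta)$, where $\zeta$ is the first hitting time of $-1$ by the walk $W_n=Y_1+\cdots+Y_n$. In particular, $|\mathcal{T}|$ has the same law as $\zeta$, so conditioning the tree on $|\mathcal{T}|=n$ corresponds exactly to conditioning the random walk on the event $\{\zeta=n\}$. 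Hence the Łukasiewicz path of $\Tn$ is distributed as the excursion process $\Wn=(W_i : 0\leq i \leq n)$ under $\P(\,\cdot\,|\zeta=n)$ studied in Section \ref{sec:local} (the increments of $W$ satisfy \eqref{eq:hypXstar} since $\mu$ satisfies \eqref{eq:hypmustar}).

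Second, apply Theorem \ref{thm:RW-local} directly: it yields the convergence
\[ \left(\frac{\Wn_{\lfloor nt\rfloor}}{|b_n|} : -1\leq t\leq 1\right) \quad\xrightarrow[n\to\infty]{(d)}\quad \bigl((1-t)\mathbbm{1}_{t\geq 0} : -1\leq t\leq 1\bigr)\]
in $\D([-1,1],\R)$, which by the identification of laws above is exactly the statement to be proved. The convention $\W_k(\Tn)=0$ for $k<0$ or $k>|\Tn|=n$ matches the convention $W^{(n)}_k=0$ for $k<0$ used in Theorem \ref{thm:RW-local}, and the value at the terminal time $k=n$ (namely $-1$) does not affect the Skorokhod limit.

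There is essentially no obstacle beyond checking that the coding via Proposition \ref{prop:GWRW} transports the conditioning $\{|\mathcal{T}|=n\}$ to $\{\zeta=n\}$ and therefore transports the tree-level statement to the random-walk statement already proved. The genuine content of the proposition lies in Theorem \ref{thm:RW-local}, itself built on the total variation approximation of $\Wn$ by the Vervaat-transformed trajectory $\Zn$ (Theorem \ref{thm:dTVLoc}) and on the estimates for $I_n$ (Proposition \ref{prop:In}); once those are granted, the proof of Proposition \ref{prop:luka-local} reduces to a one-line invocation.
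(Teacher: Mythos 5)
Your proof is correct and follows exactly the route taken in the paper: the authors state that Proposition~\ref{prop:luka-local} is immediately obtained by combining Proposition~\ref{prop:GWRW} (the identification of the Łukasiewicz path of $\Tn$ with the excursion $\Wn$ under $\P(\,\cdot\,|\zeta=n)$) with Theorem~\ref{thm:RW-local}. Your write-up simply spells out the details of that combination, which is fine.
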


\begin{figure}[h!]
  \centering
  \includegraphics[width=0.5 \linewidth]{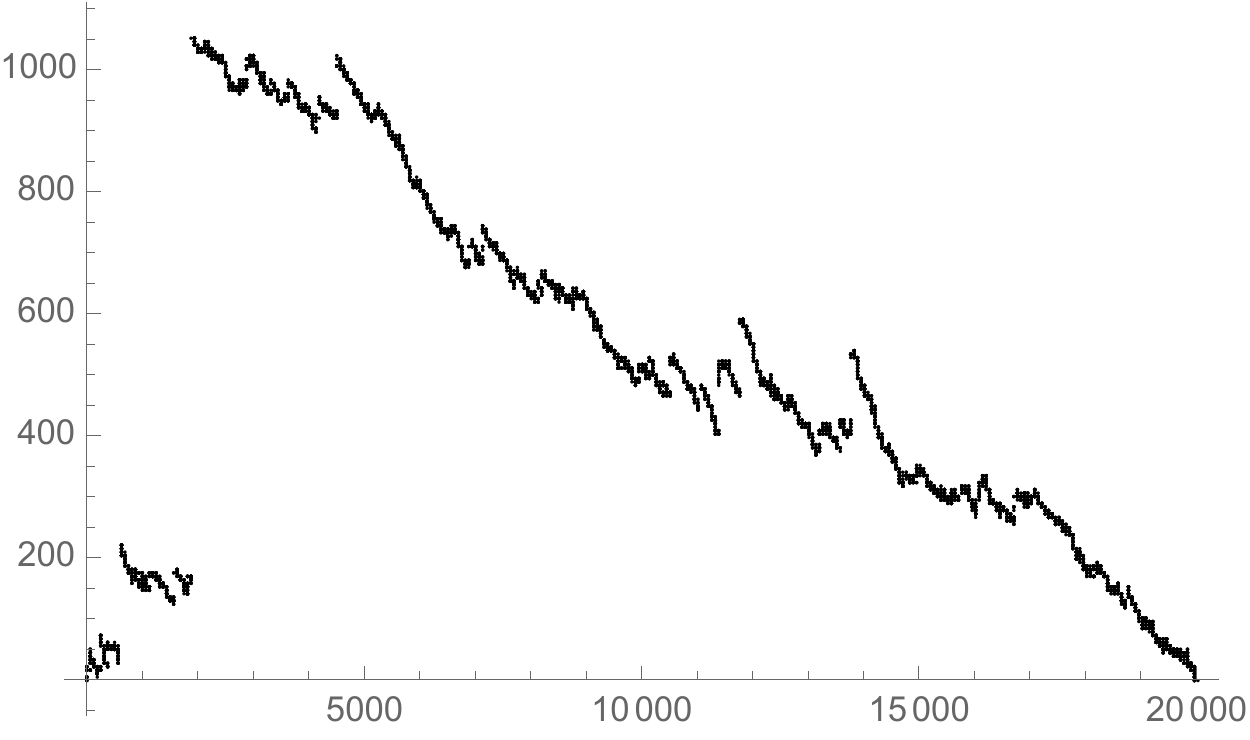}
\caption{\label{fig:luka} The {\L}ukasiwiecz path of the tree depicted in  Figure \ref{fig:tree}.}
\end{figure}

Our goal is now to prove Theorem \ref{thm:loop-local}, which requires more work.
 
\begin{proof}[Proof of Theorem \ref{thm:loop-local}] First of all, by \cref{prop:GWRW} and  \cref{thm:dTVLoc}, we can work with the tree $\Tn'$ whose {\L}ukasiewicz path is $\Zn$ instead of $\Tn$. Recall that by \eqref{eqn:ZnVervaat}, $\Zn \coloneqq \mathcal{V}(W_0,W_1,\ldots,W_{n-1},-1)$. 

By \cref{cor:timereversal} $(i)$, we have $\Delta(\Tn')= \vert W_{n-1} \vert +1$ with probability tending to one as $n\rightarrow \infty$, which yields the first part of the statement, that is,
\[\frac{\Delta( \mathcal{T}'_{  n})-|b_{n}| }{a_{n}}   \quad \xrightarrow[n\to\infty]{(d)} \quad \mathcal{C}_{1}.\]

We now turn to the second part. We denote by $v'_n$ the vertex of maximal degree in $\Tn'$, and we work without loss of generality conditionally on the event that this vertex is unique and has degree $\Delta(\Tn')= \vert W_{n-1} \vert +1$.
We let $(\tau_k : 0\leq k \leq \vert W_{n-1} \vert)$ be the connected components of $\Tn'\backslash \{v'_n\}$ (cyclically ordered), where $\tau_0$ is the connected component containing the root vertex of $\Tc'_n$. For every $k\neq 0$, we assume that $v'_n$ is the root vertex of $\tau_k$. By construction, $\Loop(\Tn')$ can be described as a cycle of length $\Delta(\Tn')$ on which the random graphs $(\Loop(\tau_k) : 0 \leq k \leq \vert W_{n-1} \vert)$ are grafted. Our goal is to prove the following estimate:
 \begin{equation}\label{eqn:DiamLoops}
 	\frac{1}{\vert b_n \vert}\sup_{0\leq k \leq \vert W_{n-1} \vert} \rad\left(\Loop\left( \tau_k\right)\right) \quad \xrightarrow[n \rightarrow \infty]{(\P)} \quad 0,
 \end{equation} where $\rad(G)$ stands for the radius of the pointed graph $G$, that is, the maximal graph distance to the root vertex in $G$ (implicitly, $\tau$ and $\Loop(\tau)$ share the same root vertex). Then, the desired result will follow from standard properties of the Gromov--Hausdorff topology.

 Let us introduce a decomposition of the walk $(W_i : i \geq 0)$ into excursions above its infimum. Namely, we set $\zeta_{k}=\inf\{i\geq 0 : W_i=-k\}$  for every $k\geq 0$, and introduce the excursions
 \[\left( \mathcal{W}^{(k)}_i : 0 \leq i \leq \zeta_{k}-\zeta_{k-1} \right) \coloneqq \left( W_{\zeta_k+i}+k : 0 \leq i \leq \zeta_{k}-\zeta_{k-1} \right), \quad k\in \N.\] For every $k\geq 1$, we let $\tau_k$ be the tree whose {\L}ukasiewicz path is $\mathcal{W}^{(k)}$. This choice of notation is justified by the fact that for every $1\leq k \leq \vert W_{n-1}\vert$, $\tau_k$ is indeed the $k$-th tree grafted on $v'_n$ in $\Tn'$ (see Figure \ref{fig:DecompoLoop} for an illustration).
 
 The ancestral tree $\tau_0$ plays a special role. If we set $\underline{W}_k:=\inf\{W_i : 0 \leq i \leq k\}$ for every $k \geq 1$, then its {\L}ukasiewicz path is given by
 \begin{align*}
 	\Bigg[\left( W_{\zeta_{\vert\underline{W}_{n-1}\vert}+i}-\underline{W}_{n-1}\right)_{0\leq i < n-\zeta_{\vert\underline{W}_{n-1}\vert}},& W_{n-1}-\underline{W}_{n-1}-1 , \\
 		 &\left( W_{\zeta_{\vert W_{n-1}\vert}+i}-\underline{W}_{n-1}-1\right)_{ 0\leq i < \zeta_{\vert\underline{W}_{n-1}\vert}-\zeta_{\vert W_{n-1}}\vert}\Bigg].
 \end{align*} Thus, we can decompose this tree into:
\begin{itemize}
	\item The tree $\tau^*_{\vert\underline{W}_{n-1}\vert+1}$ whose {\L}ukasiewicz path is $(W_{\zeta_{\vert\underline{W}_{n-1}\vert}+i}-\underline{W}_{n-1} : 0\leq i < n-\zeta_{\vert\underline{W}_{n-1}\vert})$ (completed by $-1$ steps). This is the tree made of the spine $\llbracket \varnothing, v'_n \llbracket$ together with children of its vertices and all descendants on its left in $\Tn'$.
	\item The trees $(\tau_k : \vert W_{n-1}\vert < k \leq \vert \underline{W}_{n-1} \vert)$ that are the $W_{n-1}-\underline{W}_{n-1}$ trees grafted on the right of the spine $\llbracket \varnothing, v'_n \llbracket$ in $\Tn'$.
\end{itemize} This decomposition is illustrated in Figure \ref{fig:DecompoLoop}.

\begin{figure}[h!]
  \centering
  \includegraphics[width=\linewidth]{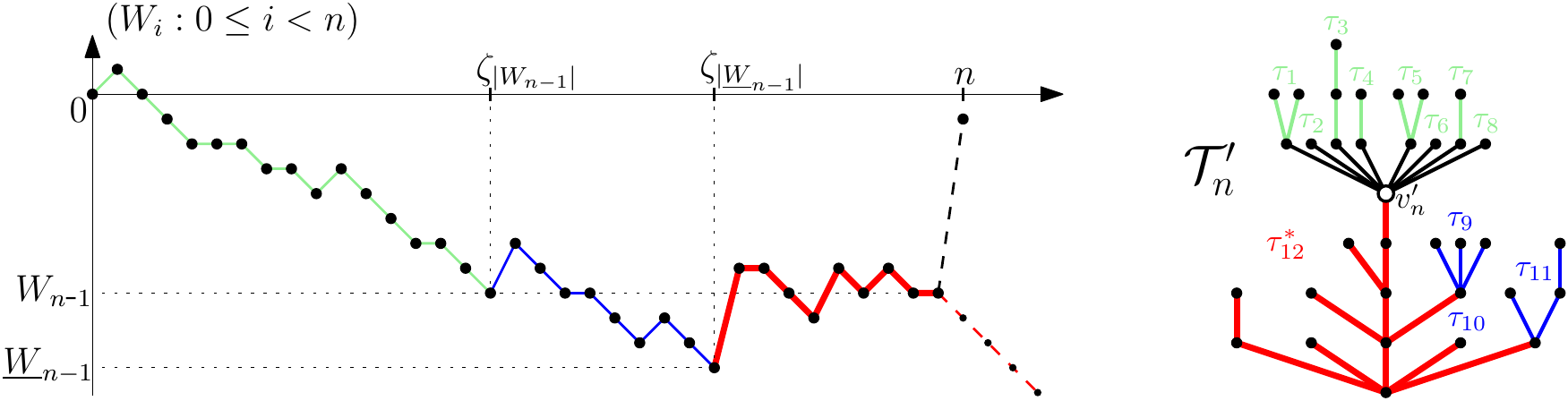}
\caption{The random walk $(W_i : 0\leq i < n)$ and the associated tree $\Tn'$. In light green, the $\vert W_{n-1} \vert $ first excursions of $W$ that encode the trees grafted above the vertex with maximal degree $v'_n$. In blue, the next $W_{n-1}-\underline{W}_{n-1}$ excursions of $W$, that encode the trees $(\tau_k : \vert W_{n-1}\vert < k \leq \vert \underline{W}_{n-1} \vert)$. In bold red, the tree $\tau^*_{\vert\underline{W}_{n-1}\vert+1}$ and its {\L}ukasiewicz path.}
\label{fig:DecompoLoop}
\end{figure}

Back to \eqref{eqn:DiamLoops}, we get that
\begin{align*}
	\sup_{0\leq k < \Delta(\Tn') } \rad\left(\Loop\left( \tau_k\right)\right)& \leq \rad\left(\Loop\left( \tau_0\right)\right)+\sup_{1\leq k \leq \vert W_{n-1} \vert} \rad\left(\Loop\left( \tau_k\right)\right)\\
&\leq \rad\left(\Loop\left( \tau^*_{\vert\underline{W}_{n-1}\vert+1}\right)\right)+ \sup_{1\leq k \leq \vert \underline{W}_{n-1} \vert}\rad\left(\Loop\left( \tau_k\right)\right).
\end{align*} By standard estimates on looptrees (see \cite[Lemma 11]{KR18}), for every plane tree $\tau$ we have 
 \begin{equation*} 	\rad\left(\Loop\left( \tau\right)\right)\leq \mathbf{H}(\tau) + \sup_{0 \leq i < \vert \tau \vert} \W_i(\tau),
 \end{equation*} where $\mathbf{H}(\tau)$ is the  height (i.e.\ the radius) of $\tau$ and $\W(\tau)$ its {\L}ukasiewicz path. This yields
\begin{align*}
	\sup_{0\leq k < \Delta(\Tn') } \rad\left(\Loop\left( \tau_k\right)\right) \leq \mathbf{H}\left( \tau^*_{\vert\underline{W}_{n-1}\vert+1}\right)+ \sup_{1\leq k \leq \vert \underline{W}_{n-1} \vert}\mathbf{H}\left( \tau_k\right) +2\sup_{0 \leq i \leq n} \left(W_i-\underline{W}_i \right).
\end{align*} By the functional convergence \eqref{eq:cvfonctionnelle} and since $a_{n}=o(|b_{n}|)$, we have
\[\frac{1}{\vert b_n \vert}\sup_{0 \leq i \leq n} \left(W_i-\underline{W}_i \right) \quad \xrightarrow[n \rightarrow \infty]{(\P)} \quad 0,\] so that it suffices to show that
\[\frac{1}{\vert b_n \vert} \left( \mathbf{H}\left( \tau^*_{\vert\underline{W}_{n-1}\vert+1}\right)+ \sup_{1\leq k \leq \vert \underline{W}_{n-1} \vert}\mathbf{H}\left( \tau_k\right) \right) \quad \xrightarrow[n \rightarrow \infty]{(\P)} \quad 0.\] Since $\tau^*_{\vert\underline{W}_{n-1}\vert+1}$ is a subtree of $\tau_{\vert\underline{W}_{n-1}\vert+1}$ (that is, the tree encoded by the $(\vert\underline{W}_{n-1}\vert+1)$-th excursion of $W$), we obtain
\[\mathbf{H}\left( \tau^*_{\vert\underline{W}_{n-1}\vert+1}\right)+ \sup_{1\leq k \leq \vert \underline{W}_{n-1} \vert}\mathbf{H}\left( \tau_k\right) \leq \sup_{1\leq k \leq \vert \underline{W}_{n-1} \vert +1}\mathbf{H}\left( \tau_k\right).\] Moreover, we have
\[ \Pr{\sup_{1\leq k \leq \vert \underline{W}_{n-1} \vert +1}\mathbf{H}\left( \tau_k\right) \geq \varepsilon \vert b_n \vert }\leq \Pr{\vert\underline{W}_{n-1}\vert \geq 2 \vert b_n \vert} +  \Pr{\sup_{1\leq k \leq 2 \vert b_n \vert }\mathbf{H}\left( \tau_k\right) \geq \varepsilon \vert b_n \vert }.\] Thanks to the functional convergence \eqref{eq:cvfonctionnelle}, we have $\Pr{\vert\underline{W}_{n-1}\vert \geq 2 \vert b_n \vert} \rightarrow 0$ as $n \rightarrow \infty$. Then, recall that $(W_i : i \geq 0)$ is a random walk, so that the trees $(\tau_k : k\geq 1)$ are i.i.d. $\BGW_\mu$ trees. It follows from Lemma \ref{lem:HeightBGW} that 
\[\Pr{\sup_{1\leq k \leq 2 \vert b_n \vert }\mathbf{H}\left( \tau_k\right) \geq \varepsilon \vert b_n \vert } \sim 2 \vert b_n \vert\Pr{\mathbf{H}\left( \tau_k\right) \geq \varepsilon \vert b_n \vert} \quad \xrightarrow[n \rightarrow \infty]{} \quad 0.\] This proves \eqref{eqn:DiamLoops} and completes the proof.
\end{proof}

We now  obtain information concerning the vertex with maximal degree in $\Tn$, such as its height and its index in the lexicographical order. In this direction,  in virtue of \cref{prop:laddertimes}, we let $\Lambda$ be the increasing slowly varying function such that
\[ \P(\inf \{j > 0: W_{j} \geq 0\} \geq n)=\frac{1}{\Lambda(n)}.\] We also denote by $U^{\ast}_{n}$  the index of the first vertex of  $\Tn$ with maximal out-degree in the lexicographical order (starting from $0$).

 \begin{corollary}\label{cor:U-local}
 The following assertions hold as $n \rightarrow \infty$.
\begin{enumerate}
\item For every $x\in(0,1)$,
$\P \left( \frac{{\Lambda}( U^{\ast}_{n})}{{\Lambda}(n)} \leq x \right)  \rightarrow  x$.
\item  The convergence $ \frac{ 1}{n} U^{\ast}_{n}  \rightarrow  0$ holds in probability. 
\end{enumerate}
 \end{corollary}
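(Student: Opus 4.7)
The plan is to transfer the question to a statement about the path $\Zn$ via \cref{thm:dTVLoc}, and then to apply \cref{cor:timereversal} together with \cref{prop:In}. First, I would combine \cref{prop:GWRW} with \cref{thm:dTVLoc} to work, up to a total variation error tending to $0$, with a tree $\Tn'$ whose {\L}ukasiewicz path is $\Zn$ in place of $\Tn$. Under the {\L}ukasiewicz encoding, the $i$-th vertex in lexicographical order has out-degree equal to one plus the $i$-th increment of the path, so the index $U^*_n$ of the first vertex of maximal out-degree corresponds, via this coupling, exactly to the index $V_n^z$ of the first largest jump of $\Zn$.

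Next, I would invoke \cref{cor:timereversal}: on the event $\mathcal{E}_n$, which satisfies $\P(\mathcal{E}_n) \to 1$, we have $V_n^z = \widehat{I}_{n-1}$, where $\widehat{I}_{n-1}$ is the last weak ladder time of the time-reversed walk $(\widehat{W}_i : 0 \leq i < n)$. Since $(\widehat{W}_i : 0 \leq i < n)$ has the same law as $(W_i : 0 \leq i < n)$, the random variable $\widehat{I}_{n-1}$ is distributed as $I_{n-1}$.

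Finally, I would apply \cref{prop:In} with $\tilde\Lambda = \Lambda$ (provided by \cref{prop:laddertimes}) to the random variable $I_{n-1}$: assertion (i) there yields $\P(\Lambda(I_{n-1})/\Lambda(n-1) \leq x) \to x$ for every $x \in (0,1)$, while assertion (ii) gives $I_{n-1}/(n-1) \to 0$ in probability. The slow variation of $\Lambda$ then gives $\Lambda(n-1)/\Lambda(n) \to 1$ as $n \to \infty$, so both estimates transfer to $U^*_n$. I do not foresee a serious obstacle: the proof is essentially a direct consequence of the machinery already developed, and the only point requiring care is to verify that the total variation coupling between $\W(\Tn)$ and $\Zn$ indeed transports the identity $U^*_n = V_n^z$ onto a high-probability event, which is immediate.
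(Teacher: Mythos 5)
Your proposal is correct and follows essentially the same route as the paper: transfer to $\Zn$ via \cref{prop:GWRW} and \cref{thm:dTVLoc}, identify $U^*_n$ with the first maximal jump index $V_n^z$, use \cref{cor:timereversal} to replace $V_n^z$ by $\widehat{I}_{n-1}$ on the high-probability event $\mathcal{E}_n$, note that $\widehat{I}_{n-1}$ has the same law as $I_{n-1}$, and conclude by \cref{prop:In}. Your additional remarks (that the coupling transports the identity $U^*_n = V_n^z$ on a high-probability event, and that $\Lambda(n-1)/\Lambda(n)\to 1$ by slow variation) merely make explicit details the paper leaves implicit.
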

  
 \begin{proof}
 By definition, $U^{\ast}_{n}$ is the index of the first maximal jump of $\W(\Tn)$. By \cref{prop:GWRW}, \cref{thm:dTVLoc} and \cref{cor:timereversal}, it is enough to establish the result when $U^{\ast}_{n}$ is replaced with $ \widehat{I}_{n-1}$. Since $ \widehat{I}_{n-1}$ and $I_{n-1}$ have the same law,  the result follows from \cref{prop:In}. 
 \end{proof}

  We now establish a limit theorem for the height $H^{\ast}_{  n}$   of the first vertex of $\Tn$ with maximal out-degree (\cref{thm:height-local}).

 \begin{proof}[Proof of \cref{thm:height-local}] Thanks to the relation between the height and the {\L}ukasiewicz path (see e.g.~\cite[Proposition 1.2]{LG05}) we have \[H^{\ast}_{  n}= \# \left\{0 \leq i <  U^{\ast}_{n} : \W_{i}(\Tn)=\min_{[ i,  U^{\ast}_{n} ] } \W(\Tn) \right\}.\] Recall that $V^z_{n}$ is the index of the first largest jump of $\Zn$. By \cref{prop:GWRW} and  \cref{thm:dTVLoc}  it is enough to establish that
  \[ \frac{1}{\Lambda(n)} \cdot \# \left\{0 \leq  i < V^z_{n}: \Zn_{i}=\min_{[ i, V^z_{n} ] } \Zn \right\}  \quad \xrightarrow[n\to\infty]{(d)} \quad \mathsf{Exp}(1).\]
 By \cref{cor:timereversal}, we can assume without loss of generality that  the maximal jump of  $(W_0,W_1,\ldots,W_{n-1},-1)$  is the last one, so that
 \[\# \left\{0 \leq i < V^z_{n}: \Zn_{i}=\min_{[ i, V^z_{n} ] } \Zn \right\}=\#  \left\{0 < i \leq \widehat{I}_{n-1}: \widehat{W}_{i}=\max_{[ 0,i ]} \widehat{W}\right\}.\]
Since $(\widehat{W}_{i} : 0 \leq i < n)$ and $(W_{i}: 0 \leq i < n)$ have the same distribution, and moreover $\P(T_{1}>n) \sim \frac{1}{\Lambda(n)}$ by \cref{prop:laddertimes} $(i)$, the desired result follows from \cref{prop:numberladdertimes}.\end{proof}

  \begin{remark}
   In particular, $U^{\ast}_{n} \rightarrow \infty$ and $H_{n}^{*} \rightarrow \infty$ in probability. However, if $\mu$ is subcritical and in the domain of attraction of a Cauchy distribution, $U^{\ast}_{n}$ and $H_{n}^{*}$ converge in distribution as $n \rightarrow \infty$ (this can be seen by adapting the arguments of \cite{KR18} together with the results of \cite{Ber17}).
   \end{remark}
   
We are now ready to prove Theorem \ref{thm:degrees-local}.
   
   \begin{proof}[Proof of Theorem \ref{thm:degrees-local}]
By \cref{prop:GWRW} and \cref{thm:dTVLoc}, it is enough to establish the result with $\W(\Tn)$ replaced with $\Zn= \mathcal{V}(W_0,W_1,\ldots,W_{n-1},-1)$. We keep the notation $V^z_{n}$ for the index of the first largest jump of $\Zn$, and work on the event $\mathcal{E}_n$ thanks to \cref{cor:timereversal}. 
 
Recall that $(\Zn_{i} : 0\leq i  \leq V^z_{n})=(\widehat{W}_{i} : 0\leq i  \leq \widehat{I}_{n-1})$ and that moreover $I_{n}/n \rightarrow 0$ in probability by Proposition \ref{prop:In}. Thus, by the functional convergence \eqref{eq:cvfonctionnelle} (applied with $\widehat{W}$ instead of $W$) and standard properties of Skorokhod's $J_{1}$ topology, we get that \[\frac{1}{a_{n}} \sup_{0 \leq i < V^z_{n}}{\left|\Zn_{i+1}-\Zn_{i}\right|}  \quad \xrightarrow[n \rightarrow \infty]{(\P)} \quad 0,\] meaning that the first $V^z_n-1$ jumps of $\Zn$ are $o(a_n)$.

By the proof of  \cref{thm:RW-local} and  Skorokhod's representation theorem we may assume that the following  convergences hold almost surely as $n\rightarrow \infty$:
\begin{equation}
\label{eq:cv1} \frac{V^z_{n}}{n} \rightarrow 0, \,\, \frac{1}{|b_{n}|} \sup_{[ 0,V^z_{n} ]}{|\Zn|} \rightarrow 0, \,\,   \frac{1}{a_{n}} \sup_{0 \leq i < V^z_{n}}{\left|\Zn_{i+1}-\Zn_{i}\right|} \rightarrow 0, \,\,  \frac{1}{|b_{n}|}\left(\Zn_{V^z_{n}+1}-\Zn_{V^z_{n}}\right)  \rightarrow 1
\end{equation}
and
\begin{equation}
\label{eq:cv2}\left( \frac{\Zn_{V^z_{n}+1+\lfloor nt \rfloor}-\Zn_{V^z_{n}+1}- b_{n} t}{a_{n}} : 0 \leq t \leq  1 \right)  \quad \xrightarrow[n\to\infty]{(d)} \quad (\mathcal{C}_{t})_{0 \leq t \leq 1},
\end{equation}
where we set $Z^{(n)}_{k}=0$ for $k>n$.

Therefore, for $n$ sufficiently large, we have $\Delta^{(0)}_{  n}=\Zn_{V^z_{n}+1}-\Zn_{V^z_{n}}$ and $(\Delta^{(i)}_{  n} : i \geq 1)$ are the jumps of $(\Zn_{i} : V^z_{n}+1 \leq i \leq {n})$ in decreasing order.
Since $ \mathcal{C}$ is almost surely continuous at $1$,  by continuity properties of the Skorokhod topology, we get that $ ( {\Delta^{(1)}_{  n} }/{a_{n}},{\Delta^{(2)}_{  n} }/{a_{n}}, \ldots )$ converges in distribution to the  decreasing rearrangement of the jumps of $ (\mathcal{C}_{t}, 0 \leq t \leq 1)$.
Since the Lévy measure of $ \mathcal{C}$ is $ \mathbbm{1}_{x>0} \frac{{\d}x}{x^{2}}$, the desired result follows from the fact that	$(\mathcal{C}_{t}- \mathcal{C}_{t-} , t \geq 0)$ is a Poisson point process with intensity $ \mathbbm{1}_{x>0} \frac{{\d}x}{x^{2}}$ (see e.g.~\cite[Section 1.1]{Ber96}).  \end{proof}

\section{Cauchy random walks: tail conditioning}
\label{sec:tail}

In this section, we deal with a $\BGW_\mu$ tree $\Tgn$ conditioned to have \textit{at least} $n$ vertices, when the offspring distribution $\mu$ satisfies the more general assumption \eqref{eq:hypmu}.

\smallskip

In order to do so, we consider a random walk $(W_i : i\geq 0)$ whose increments satisfy assumption \eqref{eq:hypX}. Contrary to Section \ref{sec:local}, we aim at studying the behaviour of the \emph{meander} $(\Wgn_{i} : i \geq 0)$, defined as $(W_{i}: i \geq 0)$ under the ``tail'' conditional probability $ \P( \, \cdot \, | \zeta \geq n)$, which is the {\L}ukasiewicz path of $\Tgn$. (We use the notation $\Wgn$ because the notation $W^{(n)}$ has been used when working under the local conditioning $ \P( \, \cdot \, | \zeta = n)$).

More precisely, we shall couple with high probability the trajectory $(\Wgn_{i}: i \geq 0)$ with that of a random walk conditioned to be nonnegative for a random number of steps (whose number converges in probability to $\infty$ as $n \rightarrow \infty$), followed by an independent ``big jump'', and then followed by an independent unconditioned random walk. 

We will use again the notation and results of Section \ref{sec:Estimates}. 

\subsection{Invariance principle for meanders}
\label{ss:notation}

First recall that $I_{n}$ is the last weak ladder time of $(W_{i}: 0 \leq i \leq n)$. For $n \geq 1$, we consider the process $(\Zgn_{i} : i \geq 0)$ whose distribution is specified as follows. 

For every $j \geq 1$, conditionally given $\{I_{n}=j-1\}$, the three random variables $(\Zgn_{i} : 0\leq i <j)$, $\Ygn_{j} \coloneqq \Zgn_j-\Zgn_{j-1}$ and $(\Zgn_{i+j}-\Zgn_j : i \geq 0)$ are independent and distributed as follows:
\begin{itemize}
	\item $\displaystyle (\Zgn_{i} : 0\leq i <j) \, \mathop{=}^{(d)}  \, (W_{i} : 0\leq i <j) \textrm{ under } \P(\, \cdot \, | \zeta \geq j)$
	\item $\displaystyle \Ygn_{j}  \,  \mathop{=}^{(d)} \, X \textrm{ under } \P(\, \cdot \, | X \geq |b_n|)$
	\item $\displaystyle (\Zgn_{i+j}-\Zgn_j : i \geq 0) \, \mathop{=}^{(d)}   \, (W_{i} : i \geq 0)$.
\end{itemize}

Our main result is the following.

\begin{theorem}\label{thm:dTVtail} We have
\[d_{\mathrm{TV}}\left( \left(\Wgn_{i} : i \geq 0 \right),  \left(\Zgn_{i} : i \geq 0 \right) \right)  \quad \xrightarrow[n\to\infty]{}  \quad 0,\]
where $d_{\mathrm{TV}}$ denotes the total variation distance on $\R^{\Z_{+}}$ equipped with the product topology.
\end{theorem}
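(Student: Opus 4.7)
The plan is to mirror the proof of \cref{thm:dTVLoc} but, absent a bridge structure on which to apply a Vervaat transform, to work directly with a Markov decomposition of the walk at the first ``big jump''. Let $\tau_n := \inf\{k \geq 1 : X_k \geq |b_n|\}$. Conditional on $\{\tau_n = j\}$, the three blocks $(X_1,\ldots,X_{j-1})$, $X_j$, and $(X_{j+k})_{k \geq 1}$ are independent, with laws respectively i.i.d.\ $X \mid X < |b_n|$, $X \mid X \geq |b_n|$, and i.i.d.\ $X$. My aim is to show that under $\Pr{\cdot \mid \zeta \geq n}$ each block matches, in total variation, the corresponding block in the definition of $\Zgn$, once $\tau_n$ is identified with $I_n + 1$.

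For the switching time, I would partition $\{\zeta \geq n\}$ according to the value of $\tau_n$ and use the Markov property to factor
\begin{equation*}
\Pr{\tau_n = j,\ \zeta \geq n} = \Pr{X_1,\ldots,X_{j-1}<|b_n|,\ \zeta \geq j}\cdot\Pr{X \geq |b_n|}\cdot\Es{\Pr{\inf_{1 \leq k \leq n-j-1}(W_{j+k}-W_j) \geq -W_j \mid W_j}}.
\end{equation*}
Since $n\,\Pr{X \geq |b_n|} \sim L(|b_n|)/\ell^*(|b_n|) \to 0$, the truncation in the first factor is asymptotically vacuous uniformly in $j \leq n$, so that factor is asymptotically $\Pr{\zeta \geq j}$. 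Combining \cref{cor:utile}~$(iii)$, $\Pr{X \geq |b_n|} \sim \Pr{\zeta \geq n}\,\Pr{T_1 > n}$, with \cref{cor:utile}~$(i)$ (to replace $\Pr{T_1 > n}$ by $\Pr{T_1 > n - j + 1}$ on the typical range $j = o(n)$) and \cref{lem:In}, one obtains
\begin{equation*}
\Pr{\tau_n = j \mid \zeta \geq n} \sim \Pr{\zeta > j - 1}\,\Pr{T_1 > n - j + 1} = \Pr{I_n = j - 1},
\end{equation*}
so the conditional law of $\tau_n$ matches the marginal of $I_n + 1$ prescribed in $\Zgn$.

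The critical remaining point is that the expectation factor above tends to $1$. By the Pareto tail of $X \mid X \geq |b_n|$, one has $W_{\tau_n}/|b_n| \to J$ in distribution with $\Pr{J \geq c} = 1/c$ for $c \geq 1$, so $J > 1$ almost surely; meanwhile the functional convergence \eqref{eq:cvfonctionnelle} together with $a_n = o(|b_n|)$ shows that the (negated) minimum $-\min_{k \leq n}W_k / |b_n|$ of an unconditioned walk concentrates at $1$ with fluctuations of order $a_n/|b_n| \to 0$. The strict separation $J > 1$ dominates those fluctuations, so the survival probability tends to $1$, and the elementary bound $d_{\mathrm{TV}}(\Pr{\cdot \mid A_n},\Pr{\cdot}) \leq 2(1-\Pr{A_n})$ then shows that the post-jump block under the conditioning is close in total variation to an independent unconditioned walk. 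Combining the three block identifications with the matching of the switching-time law yields an explicit coupling of $\Wgn$ and $\Zgn$ that agree with probability tending to $1$, hence $d_{\mathrm{TV}}(\Wgn,\Zgn)\to 0$. The main obstacle is precisely this post-jump step: the leading-order identity $|\min W_k| \sim |b_n|$ places the survival constraint on the boundary of what is typical for an unconditioned walk, and only the strict Pareto-tail gap $J > 1$ inherited from the conditioning $X \geq |b_n|$ closes the argument; uniformity in the typical range $\tau_n = o(n)$ is handled via \cref{prop:In}.
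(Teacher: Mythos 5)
Your proposal follows the same overall philosophy as the paper's proof (one big jump around time $I_n+1=o(n)$, matching the switching time via \cref{lem:In} and \cref{cor:utile}, and using \cref{prop:In} to control the typical range of the switching time), but there is a genuine gap in the block-decomposition step. The identity you write for $\Pr{\tau_n=j,\zeta\geq n}$ is \emph{not} a product of three independent factors: by the Markov property,
\[
\Pr{\tau_n=j,\zeta\geq n}=\Es{\One_{\zeta\geq j,\,X_i<|b_n|\,\forall i<j}\,\One_{X_j\geq|b_n|}\,\Pr{\inf_{1\leq k\leq n-j}(W_{j+k}-W_j)\geq -W_j\ \big|\ W_j}},
\]
and the last factor depends on $W_j=W_{j-1}+X_j$, so it couples the pre-jump block, the jump, and the post-jump block. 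Moreover, the conditional survival probability does \emph{not} tend to $1$ uniformly in $W_j\geq|b_n|$: for $W_j=(1+\eps)|b_n|$ with small $\eps$ it tends to $1$, but as $W_j\downarrow|b_n|$ the rescaled threshold $(-W_j-b_{n-j})/a_{n-j}$ stays bounded and the functional convergence~\eqref{eq:cvfonctionnelle} only gives a survival probability strictly less than $1$. Since the Pareto law $X\mid X\geq|b_n|$ puts mass of order $\eps$ on $[|b_n|,(1+\eps)|b_n|]$, the ``$J>1$ almost surely'' bound is not enough by itself; you would need to spell out the $\eps$-$\delta$ argument (restrict to $X_j\geq(1+\eps)|b_n|$, pay a TV cost of order $\eps$, then let $\eps\to 0$), which is essentially what the paper does by introducing the buffer $\eta_n=\sqrt{a_n|b_n|}$ in \cref{lem:PFCauchy}. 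Finally, the asserted uniformity ``in $j\leq n$'' for dropping the truncation in the pre-jump block is false: conditioning on $\zeta\geq j$ makes a large jump \emph{more} likely, and for $j$ of order $n$ the conditional probability of a jump $\geq|b_n|$ is in fact close to $1$. The correct statement only holds for $j\leq x_n=o(n)$, and establishing it is precisely the ``first term'' estimate in the paper's proof of \cref{lem:PFCauchy}, which requires a nontrivial decomposition by the position of the first large jump together with \cref{cor:utile}.

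By contrast, the paper avoids all these issues by comparing $\mu_n$ and $\nu_n$ directly at the level of individual finite trajectories $\mathbf{w}\in G_n$: on the good event $G_n$ both measures assign to $\mathbf{w}\times A$ a probability of the form (common factor)$\times$(scalar), so the comparison reduces to an explicit ratio of scalars controlled by \cref{lem:In} and \cref{cor:utile}, and the only thing left to prove is $\nu_n(G_n)\to 1$. This trajectory-level comparison sidesteps the coupling of blocks through $W_j$ entirely. Your block-decomposition approach is salvageable, but as written it asserts independence that does not hold under the conditioning $\zeta\geq n$ and asserts a uniform survival-probability limit that is false, so it would need to be substantially reworked to become a proof.
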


 Intuitively speaking, this means that under the conditional probability $ \P( \, \cdot \, | \zeta \geq n)$, as $n \rightarrow \infty$, the random walk $(W_{i} : i \geq 0)$ first behaves as conditioned to stay nonnegative for a random number $I_{n}$ of steps, then makes a jump distributed as  $ \P( \, \cdot \, | X \geq |b_n|)$, and finally evolves as a non-conditioned walk. See \cref{prop:In} above for an estimate on the order of magnitude of~$I_{n}$.

\begin{example}When $\mu(n) \sim \frac{c}{n^{2} \ln(n)^{2}}$ as $n \rightarrow \infty$, by \cref{prop:In} and  Example \ref{ex:1} we have that $ \frac{\ln(I_{n})}{\ln(n)}$ converges in distribution to a uniform random variable $U$ on $[0,1]$. In other words, the time $I_{n}$ of the ``big jump'' of $\Zgn$ is of order $n^{U}$.
\end{example}

\paragraph*{Proof of \cref{thm:dTVtail}}
The structure of the proof is similar to that of \cite[Theorem 7]{KR18}. However, in the latter reference, $I_{n}$ converges in distribution as $n \rightarrow \infty$ to an integer-valued distribution, while here $I_{n} \rightarrow \infty$ in probability. For these reasons, the approach is more subtle.

Let us introduce some notation. Let $\cA$ be the Borel $\sigma$-algebra on $\R^\N$ associated with the product topology and set
\[\mu_n(A) \coloneqq \Pr{\left(\Wgn_i : i \geq 0\right)\in A} \quad \text{and} \quad \nu_n(A) \coloneqq \Pr{\left(\Zgn_i : i \geq 0\right)\in A}, \quad A\in \cA.\] The idea of the proof of \cref{thm:dTVtail} is to transform the estimates of \cref{cor:utile} into an estimate on probability measures by finding a ``good" event $G_n$ such that $\nu_{n}(G_{n}) \rightarrow 1$ as $n \rightarrow \infty$ and then by showing that $\sup_{A \in \cA }{\vert \mu_n(A\cap G_{n})-\nu_n(A\cap G_{n})\vert} \rightarrow 0$ as $n \rightarrow\infty$.

By Proposition \ref{prop:In}, we have that $I_{n}/n$ converges in probability to $0$ as $ n \rightarrow \infty$. As a consequence, we may find a sequence $(x_{n} : n\geq 1)$ such that $x_{n}=o(n)$ and $\P(I_{n} \geq  x_{n}) \rightarrow 0$. From now on, we let $(x_{n} : n\geq 1)$ be such a sequence.

\begin{lemma}\label{lem:PFCauchy} For every $n\in\N$, set 
\[
	G_{n}:=\left\lbrace (w_0,\ldots,w_n) \in \R_+^{n+1} : \exists! \ i \in \llbracket 1,x_n\rrbracket \text{ s.t. } w_i-w_{i-1} \geq \vert b_n \vert \right\rbrace.
\] Then, $\nu_{n}( G_{n} ) \longrightarrow 1$ as $n \rightarrow \infty$.\end{lemma}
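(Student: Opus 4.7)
The plan is as follows. By construction of $\Zgn$, conditionally on $\{I_n = j - 1\}$ there is a forced increment $\Ygn_j \geq |b_n|$ located at position $j$. Hence $G_n$ holds as soon as (a) $j \leq x_n$ and (b) none of the increments of $\Zgn$ at positions in $\llbracket 1, x_n\rrbracket \setminus \{j\}$ is $\geq |b_n|$. Assertion (a) has probability $1 - o(1)$ by the defining property of $(x_n)$, namely $\P(I_n \geq x_n) \to 0$.

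To prove (b) I split into a pre-jump contribution (positions $\llbracket 1, I_n\rrbracket$) and a post-jump contribution (positions $\llbracket I_n + 2, x_n\rrbracket$). The post-jump increments are i.i.d.\ copies of $X$, so a union bound gives that the probability of any one of them exceeding $|b_n|$ is at most $x_n \P(X \geq |b_n|) \leq n \P(X \geq |b_n|) \sim L(|b_n|)/\ell^*(|b_n|)$, which tends to $0$ by the very definition of $\ell^*$ (see \cite[Proposition 1.5.9a]{BGT89}).

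For the pre-jump contribution, conditionally on $\{I_n = j - 1\}$, the first $j-1$ increments of $\Zgn$ are distributed as $(X_i)_{1 \leq i \leq j - 1}$ under $\P(\cdot \mid \zeta \geq j)$. The crude bound $\P(\,\cdot\, \mid \zeta \geq j) \leq j\,\P(X \geq |b_n|)/\P(\zeta \geq j)$ is too lossy, since $1/\P(\zeta \geq j)$ is too large when $j$ is small. The sharper bound exploits the fact that $\{\zeta > i - 1\}$ is measurable with respect to $(X_1, \ldots, X_{i - 1})$ and is therefore independent of $X_i$; hence
\[
\P\bigl(\exists \, 1 \leq i < j : X_i \geq |b_n|,\, \zeta \geq j\bigr) \leq \sum_{i=1}^{j-1} \P(X_i \geq |b_n|)\, \P(\zeta > i - 1) = \P(X \geq |b_n|) \sum_{k=0}^{j-2} \P(\zeta > k).
\]
Combining \cref{prop:laddertimes} (items $(i)$ and $(ii)$) with $\P(X \geq |b_n|) \sim L(|b_n|)/|b_n|$ yields
\[
\P\bigl(\exists \, 1 \leq i < j : X_i \geq |b_n| \bigm| \zeta \geq j\bigr) \leq (1 + o(1))\, \frac{L(|b_n|)\, |b_j|}{L(|b_j|)\, |b_n|},
\]
and a routine argument based on the regular variation of $b_\cdot$ of index $1$ together with Potter's bound \cite[Theorem 1.5.6]{BGT89} shows that the right-hand side tends to $0$ uniformly in $j \leq x_n$, since $x_n = o(n)$. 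Averaging over the conditional distribution of $I_n$ on $\{I_n < x_n\}$ gives a total contribution of $o(1)$.

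Putting the three bounds together yields $\nu_n(G_n^c) \leq \P(I_n \geq x_n) + o(1) \to 0$. The main subtlety is the sharp bound on the pre-jump contribution: a naive Markov-inequality bound under the conditioning $\{\zeta \geq j\}$ loses a factor $1/\P(\zeta \geq j)$ that cannot be controlled using only $x_n = o(n)$, whereas the independence argument described above neatly absorbs the cost of the conditioning, leaving only the harmless ratio $|b_j|/|b_n|$.
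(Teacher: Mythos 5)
There is a genuine gap. The set $G_n$ is not merely the event that there is exactly one increment $\geq |b_n|$ among positions $\llbracket 1, x_n\rrbracket$: by definition $G_n \subset \R_+^{n+1}$, i.e.\ the trajectory must also remain nonnegative up to time $n$. This constraint is essential for the proof of Theorem \ref{thm:dTVtail}, where one exploits that $\mathbf{w} \in G_n$ forces $\{\zeta \geq n\}$ when computing $\mu_n(\mathbf{w}\times A)$. Your proposal establishes the uniqueness of the big jump but never verifies nonnegativity.

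The nonnegativity is nontrivial precisely because, after the big jump at time $I_n+1$, the process $(\Zgn_{I_n+1+i}-\Zgn_{I_n+1} : i\geq 0)$ is an \emph{unconditioned} walk with a drift of order $-|b_n|$ over $n$ steps — exactly comparable in size to the jump $\Ygn_{I_n+1}\geq |b_n|$ that it must not cancel. The paper handles this by introducing a buffer $\eta_n=\sqrt{a_n|b_n|}$ (chosen so that $a_n\ll\eta_n\ll|b_n|$) and controlling two further events: that the big jump actually exceeds $|b_n|+\eta_n$ (probability $\to 1$ since $\Pr{X<|b_n|+\eta_n\mid X\geq|b_n|}\to 0$ by regular variation and $\eta_n/|b_n|\to 0$), and that the post-jump walk never drops below $-|b_n|-\eta_n$ (probability $\to 1$ via the functional convergence \eqref{eq:cvfonctionnelle} and $\eta_n/a_n\to\infty$). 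These two estimates are what guarantee $\Zgn_i\geq 0$ for $i>I_n$, and neither appears in your argument.

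Apart from this omission, the two pieces you do treat are correct and essentially match the paper: the post-jump union bound reproducing the estimate $n\,\P(X\geq|b_n|)\sim L(|b_n|)/\ell^*(|b_n|)\to 0$, and the pre-jump bound via the first-passage decomposition $\P(\exists\, i<j: X_i\geq|b_n|,\,\zeta\geq j)\leq\P(X\geq|b_n|)\sum_{k<j}\P(\zeta>k)$. Your route to show that the latter ratio vanishes — invoking \cref{prop:laddertimes}$(i)$ and $(ii)$ directly and comparing $|b_j|/L(|b_j|)$ to $|b_n|/L(|b_n|)$ by Potter — is a legitimate variant of the paper's argument, which instead passes through $\P(T_1>n)$ and \cref{cor:utile}$(ii)$; both are fine, but neither rescues the missing nonnegativity step.
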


Let us first explain how one establishes  \cref{thm:dTVtail} using  \cref{lem:PFCauchy}.

\begin{proof}[Proof of \cref{thm:dTVtail}]By Lemma \ref{lem:PFCauchy}, it suffices to show that, as $n \rightarrow\infty$, we have  $\sup_{A \in \cA }{\vert \mu_n(A\cap G_n)-\nu_n(A\cap G_n)\vert} \rightarrow 0$. Without loss of generality, we  focus on events of the form $\mathbf{w}\times A$, where $\mathbf{w}=(0,w_1,\ldots,w_n)\in G_n$ and $A\in \cA$. 

On the one hand, since $\mathbf{w} \in G_{n}$ we have
\[\mu_n\left(\mathbf{w}\times A\right)=\frac{\Pr{(W_{0},W_{1}, \ldots,W_{n})=\mathbf{w}} \Pr{\left(W_{n+i} : i \geq 1\right) \in A }}{\Pr{\zeta \geq n}}.\] On the other hand,  write
\[\nu_n\left(\mathbf{w}\times A\right)=\sum_{j=1}^\infty \Pr{I_n=j, \ \left(\Zgn_i : i \geq 0\right) \in \mathbf{w}\times A}.\] Since $\mathbf{w}\in G_n$, there is a unique value of   $j \in \llbracket 1,x_n\rrbracket$ such that $w_j-w_{j-1} > \vert b_n\vert$, which we denote by $j(\mathbf{w})$. Hence
\begin{align*}
& \Pr{I_n< x_n, \ \left(\Zgn_i : i \geq 0\right) \in \mathbf{w}\times A}\\
	&=\sum_{j=1}^{x_n} \Pr{I_n=j-1}\cdot  \frac{\Pr{(W_{0}, \ldots,W_{j-1})=(w_0,\ldots,w_{j-1})}}{\Pr{\zeta \geq j}} \cdot \frac{\Pr{X =w_j-w_{j-1}, \ X \geq \vert b_n\vert }}{\Pr{X \geq \vert b_n\vert }}  \\
&	\hspace{2.3cm}\qquad\qquad\qquad\qquad\qquad\qquad\qquad\cdot \Pr{\left(W_{i+j} : i \geq 1\right) \in (w_{j+1},\ldots,w_{n}) \times A }\\
 &=\frac{\Pr{I_n=j(\mathbf{w})-1}}{\Pr{\zeta \geq j(\mathbf{w})}\Pr{X \geq \vert b_n\vert }}\cdot \Pr{(W_{0},W_{1}, \ldots,W_{n})=\mathbf{w}}\Pr{\left(W_{n+i} : i \geq 1\right) \in A }.
\end{align*}
We therefore obtain by Lemma \ref{lem:In} that
\begin{align*}
\vert \mu_n(\mathbf{w}\times A)-\nu_n(\mathbf{w}\times A)\vert &\leq \Pr{I_n\geq x_n} + \left|\frac{\Pr{I_n=j(\mathbf{w})-1}\Pr{\zeta\geq n}}{\Pr{\zeta \geq j(\mathbf{w})}\Pr{X \geq \vert b_n\vert }}-1\right|\\
&\leq  \Pr{I_n\geq x_n} + \left|\frac{\Pr{T_1>n-j(\mathbf{w})+1}\Pr{\zeta\geq n}}{\Pr{X \geq \vert b_n\vert }}-1\right|.	
\end{align*}
The first term goes to zero as $n \rightarrow \infty$ by definition of $(x_n : n \geq 1)$, as well as the second one since by \cref{cor:utile}, we have  $ \P(T_{1}>n-j+1) \Pr{\zeta \geq n} \sim \Pr{X \geq |b_n|}$ uniformly in $1 \leq j \leq x_{n}$. This completes the proof.\end{proof}

\begin{proof}[Proof of \cref{lem:PFCauchy}]
First, set $\eta_{n}= \sqrt{a_{n} |b_{n}|}$ and recall that $\Yn_{j} = \Zgn_j-\Zgn_{j-1}$ for every $j \geq 1$. Then, observe that the event
\begin{align*}
\{I_n \leq x_n\} \cap \left\{ \max_{1 \leq i \leq I_n} \Yn_{i}< \vert b_n\vert \right\} &\cap \left\{ \Yn_{I_n+1}> \vert b_n\vert + \eta_n \right\}  \cap\left\{ \max_{I_{n}+1 < i \leq x_{n}} \Yn_{i}< \vert b_n\vert \right\}
\\
&\cap \left\{  \min_{0 \leq i \leq n-I_n} \left(\Zgn_{I_n+i}-\Zgn_{I_n}\right) > -\vert b_n\vert -\eta_{n} \right\}
 \end{align*}
is included in the event $\{(\Zgn_0, \ldots, \Zgn_n )\in G_{n}\}$. As a consequence, $\nu_{n}(\overline{G_{n}})$ is bounded from above by
\begin{align*}
	   \Pr{I_n>x_n} &+ \max_{1\leq j \leq x_n}\Pr{\max_{1\leq i \leq j}X_i \geq \vert b_n\vert \ \Big| \ \zeta \geq j} +\Pr{X < \vert b_n\vert + \eta_n \mid X \geq \vert b_n\vert }\\
		&  + \Pr{\max_{1 \leq i \leq n} X_{i} \geq \vert b_{n}\vert}+  \Pr{ \min_{1\leq i \leq n}{W_i}\leq -\vert b_n\vert-\eta_{n}}.
\end{align*} Since $\P(I_n>x_n) \rightarrow 0$ as $n \rightarrow \infty$, it is enough to show that each one of the four last terms of the above inequality tends to $0$ as $n \rightarrow \infty$.

\paragraph*{First term.}  Let us show that $ \Pr{\max_{1 \leq i <j}{X_i} \geq |b_n| \ \middle| \ \zeta\geq j}  \rightarrow 0$ uniformly in $1 \leq j \leq x_{n}$. To this end, by decomposing the event $ \{\max_{1 \leq i <j}{X_i} \geq |b_n|\}$ according to the position of the first jump greater than $|b_n|$, write
\begin{align*}
 \Pr{\max_{1 \leq i <j}{X_i} \geq |b_n| \ \middle| \ \zeta\geq j} \leq \frac{1}{\P(\zeta \geq j)} \sum_{k=1}^{j-1} \P(X \geq |b_n|) \P(\zeta \geq k).
 \end{align*}
Hence it remains to check that
\[ \frac{\P(X \geq b_{n})}{\P( \zeta \geq x_{n})} \sum_{k=1}^{x_{n}} \P(\zeta \geq k)  \quad \xrightarrow[n\to\infty]{} \quad 0.\]
But since $\P(I_{n} \leq x_{n}) \rightarrow 1$ as $n\rightarrow \infty$, we know by Lemma \ref{lem:In} that 
\[\sum_{k=0}^{x_{n}-1} \P(\zeta > k) \P(T_{1}>n-k) =\sum_{k=1}^{x_{n}-1} \P(I_{n}=k)  \quad \xrightarrow[n\to\infty]{} \quad  1.\]
Since $x_{n}=o(n)$, by \cref{cor:utile} $(i)$ we have $\P(T_{1}>n-k)\sim \P(T_{1}>n)$ as $n \rightarrow \infty$, uniformly in $1 \leq k \leq x_{n}$. Therefore, $\sum_{k=1}^{x_{n}}\P(\zeta \geq k) \sim \frac{1}{\P(T_{1}>n)}$.  Hence, by \cref{cor:utile} $(iii)$,
\[ \frac{\P(X \geq |b_{n}|)}{\P( \zeta \geq x_{n})} \sum_{k=1}^{x_{n}} \P(\zeta \geq k)  \quad \mathop{\sim}_{n \rightarrow \infty} \quad  \frac{\P(\zeta>n)}{\P(\zeta \geq x_{n})}.\]
Since $x_{n}=o(n)$, this term tends to $0$ as $n \rightarrow \infty$  by \cref{cor:utile} $(ii)$.
\medskip

\paragraph*{Second term.}
Write \begin{equation*}
	 \Pr{ X < {|b_n|} + \eta_{n} \ \middle| \ X \geq {|b_n|}} =1- \frac{\P(X \geq  |b_n| + \eta_{n})}{\P(X \geq |b_n|)}=  1-\frac{L(|b_n|+\eta_{n})}{L(|b_n|)}   \frac{1}{1+\eta_{n}/|b_n|}
	 \end{equation*}
which tends to $0$ as $n \rightarrow \infty$ since ${\eta_{n}}/{|b_n|}=\sqrt{ {a_{n}}/{|b_n|}}\rightarrow 0$.

\paragraph*{Third term.} Using \eqref{eq:bn}, write 
\[\Pr{\max_{1 \leq i \leq n} X_{i} \geq \vert b_{n}\vert} \leq n \Pr{X_{1} \geq \vert b_{n} \vert}= \frac{nL(\vert b_{n}|)}{\vert b_{n} \vert}  \quad \mathop{\sim}_{n \rightarrow \infty} \quad  \frac{L(\vert b_{n}\vert)}{\ell^{*}(\vert b_{n} \vert)},\]
which tends to $0$ as $n \rightarrow \infty$ since $\ell^{*}(n)/L(n) \rightarrow \infty$.

\paragraph*{Fourth term.}
Write
\[
\Pr{ \min_{1\leq i \leq n}{W_i}\leq - |b_{n}|-\eta_{n}} \leq \Pr{ \inf_{0 \leq t \leq 1} (W_ {\lfloor nt \rfloor} - b_{n} t)\leq -\eta_{n}} =\Pr{ \inf_{0 \leq t \leq 1}  \frac{W_ {\lfloor nt \rfloor} - b_{n} t}{a_{n}} \leq - \frac{\eta_{n}}{a_{n}}}.
\]
By \eqref{eq:cvfonctionnelle}, since the infimum is a continuous functional on $\D([0,1])$ (see e.g.~\cite[Chapter~VI, Proposition~2.4]{JS03},  $ \inf_{0 \leq t \leq 1}  \tfrac{W_ {\lfloor nt \rfloor} - b_{n} t}{a_{n}} $ converges in distribution to a real-valued random variable and since ${\eta_{n}}/{a_{n}}= \sqrt{{|b_{n}|}/{a_{n}}} \rightarrow \infty$, the last term indeed tends to $0$. \end{proof}

 We now establish a functional invariance principle for $\Wgn$, whose law we recall to be that of the random walk $(W_{i} : i \geq 0)$ under the conditional probability   $ \P( \, \cdot \, | \zeta \geq n)$.

\begin{theorem}
\label{thm:RW-tail}
   Let  $J$ be the real-valued random variable such that $\Pr{J \geq x}= 1/x$ for $x \geq 1$. Then, the convergence
\[ \left(  \frac{\Wgn_{\lfloor n t \rfloor}}{|b_{n}|} : t \geq -1 \right)   \quad \xrightarrow[n\to\infty]{(d)} \quad    \left( (J- t) \mathbbm{1}_{t \geq 0} : t \geq -1\right)\] holds in distribution in $\D([-1,\infty),\R)$.
In addition, the convergence
\[\frac{ \inf \{i \geq 1: \Wgn_{i}=-1\} }{|b_{n}|}   \quad \xrightarrow[n\to\infty]{(d)} \quad  J\] holds jointly in distribution.
  \end{theorem}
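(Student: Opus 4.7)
The plan starts from \cref{thm:dTVtail}, which allows us to replace $\Wgn$ by $\Zgn$ in any convergence-in-distribution statement, because total variation convergence implies weak convergence for every measurable functional. We then exploit the explicit decomposition of $\Zgn$: conditionally on $I_n$, it is the concatenation of a pre-jump walk $(\Zgn_i : 0 \leq i \leq I_n)$ of law $(W_i : 0 \leq i \leq I_n)$ under $\P(\,\cdot\, \mid \zeta \geq I_n + 1)$, an independent big jump $\Ygn_{I_n+1}$ of law $\P(X \in \cdot \mid X \geq |b_n|)$, and an independent unconditioned walk $(\Zgn_{I_n+1+i} - \Zgn_{I_n+1} : i \geq 0)$. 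The strategy is to prove separate scaling limits for these three pieces and then to glue them using the conditional independence.

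For the pre-jump, \cref{prop:In}(ii) gives $I_n/n \to 0$ in probability; since $a_n$ is regularly varying of index $1$, this implies $a_{I_n}/a_n \to 0$, hence $a_{I_n}/|b_n| \to 0$ in probability because $a_n = o(|b_n|)$. The Radon--Nikodym bound $\P(\,\cdot\, \mid \zeta \geq j) \leq \P(\cdot)/\P(\zeta \geq j)$ combined with the functional convergence \eqref{eq:cvfonctionnelle} then yields $\sup_{0 \leq i \leq I_n} |\Zgn_i|/|b_n| \to 0$ in probability, so the pre-jump collapses to the single point $(0,0)$ in the limit. For the big jump, regular variation gives $\P(\Ygn_{I_n+1}/|b_n| \geq u) = L(u|b_n|)/(u L(|b_n|)) \to 1/u$ for every $u \geq 1$, so $\Ygn_{I_n+1}/|b_n| \xrightarrow[n\to\infty]{(d)} J$. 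For the post-jump, \eqref{eq:cvfonctionnelle} together with $a_n = o(|b_n|)$ and $b_n/|b_n| \to -1$ gives $(W_{\lfloor nt \rfloor}/|b_n| : t \geq 0) \xrightarrow[n\to\infty]{(d)} (-t : t \geq 0)$ in $\D(\R_+, \R)$.

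Assembling the three pieces via their conditional independence given $I_n$ yields the desired convergence $\Zgn_{\lfloor nt \rfloor}/|b_n| \to (J - t)\mathbbm{1}_{t \geq 0}$. The main obstacle will be justifying Skorokhod $J_1$ convergence in $\D([-1, \infty), \R)$ across the discontinuity at $t = 0$: one must verify that the rescaled location $(I_n + 1)/n$ of the big jump tends to $0$ (already shown) and that the pre-jump walk does not oscillate above $o(|b_n|)$ on the time window $[0, I_n]$ (ensured by the sup bound above), so that the standard criterion for $J_1$ convergence in the presence of a single converging jump applies. For the joint convergence with the entrance time, write $\inf\{i \geq 1 : \Zgn_i = -1\} = (I_n + 1) + \tau_n$, where $\tau_n$ is, conditionally on $\Ygn_{I_n+1}$, the hitting time of level $-1 - \Ygn_{I_n+1}$ by the independent post-jump walk; the joint convergence of $\Ygn_{I_n+1}/|b_n|$ and the post-jump functional limit, combined with the continuity of the level-hitting functional applied to the strictly decreasing limit $t \mapsto J - t$, yields the claimed joint convergence of the entrance time to $J$ at the scaling stated in the theorem.
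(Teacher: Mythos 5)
Your overall strategy matches the paper's: replace $\Wgn$ by $\Zgn$ via \cref{thm:dTVtail}, decompose $\Zgn$ into pre-jump, big jump and post-jump, prove a separate limit for each, and glue. Your treatment of the big jump (regular variation gives $\Ygn_{I_n+1}/|b_n| \to J$) and of the post-jump (the unconditioned functional convergence \eqref{eq:cvfonctionnelle} together with $a_n = o(|b_n|)$) is correct and is exactly what the paper does.

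However, your argument for the pre-jump sup bound has a genuine gap. You propose to control $\sup_{0\leq i\leq I_n}|\Zgn_i|$ via the absolute-continuity bound $\P(\,\cdot\,\mid \zeta\geq j)\leq \P(\,\cdot\,)/\P(\zeta\geq j)$. Unpacking this with $\P(I_n=j-1)=\P(\zeta\geq j)\P(T_1>n-j+1)$ (Lemma~\ref{lem:In}), the $\P(\zeta\geq j)$ factors cancel and you are left with
$\P\bigl(\sup_{[0,I_n]}|\Zgn|>\varepsilon|b_n|\bigr)\leq \sum_{j}\P(T_1>n-j+1)\,\P\bigl(\sup_{[0,j-1]}|W|>\varepsilon|b_n|\bigr)$,
so you have not used the constraint $\{\zeta\geq j\}$ at all. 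This sum does not vanish. The trouble is that $I_n$ is very spread out: by Proposition~\ref{prop:In}(i), $\Lambda(I_n)/\Lambda(n)$ converges to a uniform law on $[0,1]$, so any truncation level $x_n$ with $\P(I_n\geq x_n)\to 0$ must satisfy $\Lambda(x_n)/\Lambda(n)\to 1$, forcing $x_n$ to be as large as $n^{1-o(1)}$ in examples such as $\mu(k)\sim c/(k^2\ln^2 k)$. For $j$ in this range, the unconditioned one-big-jump estimate $\P(\sup_{[0,j]}W>\varepsilon|b_n|)\asymp j\,\P(X>\varepsilon|b_n|)$ yields a contribution $\asymp \P(T_1>n)\,\P(X>\varepsilon|b_n|)\,x_n^2$, and with $x_n = n^{1-o(1)}$ this does not tend to $0$; in the example it behaves like $x_n^2/(n\ln^2 n)$, which diverges. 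Intuitively, the Radon--Nikodym bound is wasteful precisely because the conditioning $\{\zeta\geq j\}$ removes the strong downward drift of $W$; bounding the conditioned sup by the unconditioned sup divided by $\P(\zeta\geq j)$ throws away exactly the structure one needs.

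The paper sidesteps the conditioning entirely by a time-reversal identity (equation \eqref{eq:timereversal}): the pre-jump segment $(\Zgn_i: 0\leq i\leq I_n)$, jointly with $I_n$, has the same law as the time-reversal $W^{[I_n]}_i=W_{I_n}-W_{I_n-i}$ of the \emph{unconditioned} walk stopped at its last weak ladder time $I_n$. This gives the deterministic bound $\sup_{[0,I_n]}|W^{[I_n]}|\leq 2\sup_{[0,I_n]}|W|$, and then the claim follows immediately from $I_n/n\to 0$ in probability together with the functional convergence \eqref{eq:cvfonctionnelle} (the sup of $|W|$ over $[0,\delta n]$ is $\sim \delta|b_n|$ for small $\delta$). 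This time-reversal step is the key ingredient missing from your proof; without it, or some substitute estimate on the walk conditioned to stay nonnegative, the pre-jump bound does not close.

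As a side remark, the normalization in the second display of the theorem (and in Proposition~\ref{prop:luka-tail}) should be $n$ rather than $|b_n|$, since $\zeta(\Wgn)\geq n\gg|b_n|$; the process $\Wgn_{\lfloor nt\rfloor}/|b_n|\to (J-t)\mathbbm{1}_{t\geq 0}$ hits $0$ near $t=J$, i.e.\ near time $nJ$. Your sketch of the joint convergence via the hitting-time functional is otherwise fine.
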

  
As in \cref{thm:RW-local}, we work with $\D([-1,\infty),\R)$ instead of  $\D(\R_{+},\R)$ since our limiting process almost surely
takes a positive value in $0$, while $\Wgn$ stays small for a positive time.

\begin{proof} The proof is similar to that of \cref{thm:RW-local}.  By \cref{thm:dTVtail}, it is enough to establish the result with $\Wgn$ replaced with $\Zgn$. By \cref{prop:In} $(ii)$, we have that $I_{n}/n \rightarrow 0$ in probability, while $\Ygn_{I_{n}+1}/|b_{n}| \rightarrow J $ in distribution as $n \rightarrow \infty$. Thus, it suffices to show that as $n \rightarrow \infty$,
\begin{enumerate}
\item  $  \frac{1}{|b_{n}|} \sup_{[ 0,I_{n} ]}{|\Zgn|} \rightarrow 0$ in probability,
\item  $\left(  \frac{1}{|b_{n}|} ({\Zgn_{I_{n}+1+\lfloor n t \rfloor}}-\Zgn_{I_{n}+1}) : t \geq 0 \right) \rightarrow    \left( - t  : t \geq 0\right)$  in distribution in $\D(\R_{+},\R)$. 
\end{enumerate}

Since by construction $(\Zgn_{I_{n}+1+i}-\Zgn_{I_{n}+1} : i \geq 0)$ has the same distribution as $(W_{i} : i \geq 0)$, the second assertion simply follows from the functional convergence \eqref{eq:cvfonctionnelle} combined with the fact that $a_{n}=o(|b_{n}|)$ and standard properties of Skorokhod's topology.

For the first assertion, we use a time-reversal technique. By arguing as in the proof of  \cref{lem:In}, we see that
\begin{equation}
\label{eq:timereversal}(\Zgn_{i} : 0\leq i  \leq I_{n}) \quad  \mathop{=}^{(d)}  \quad  \left(W^{[I_{n}]}_{i} : 0\leq i  \leq I_{n}\right),
\end{equation}
where  $W^{[I_{n}]}_{i} \coloneqq W_{I_{n}}-W_{I_{n}-i}$ for  $0 \leq i \leq I_{n}$.
Hence, by combining \eqref{eq:cvfonctionnelle} with the fact that $I_{n}/n \rightarrow 0$ in probability and $a_{n}=o(|b_{n}|)$, we get that   $\frac{1}{|b_{n}|} \sup_{[ 0,I_{n} ]}{|W^{[I_{n}]}|} \rightarrow 0$ in probability. Since $ \sup_{[ 0,I_{n} ]}{|\Zgn|}$ is stochastically bounded from above by $2\sup_{[ 0,I_{n} ]}{|W^{[I_{n}]}|}$, we obtain the desired result.
\end{proof}

\subsection{Application: limit theorems for $\BGW$ trees}\label{ss:AppliGWqueue}

From now on, we let $\mu$ be an offspring distribution satisfying \eqref{eq:hypmu}, and let $ \Tgn$ be a $ \BGW_{ \mu}$ tree conditioned on having at least $n$ vertices. The goal is to apply the results of the previous sections to the study of the tree $\Tgn$.

Here, our strategy is very similar to that of Section \ref{ss:AppliGWloc}, where we replace Theorem \ref{thm:dTVLoc} by Theorem \ref{thm:dTVtail}. For this reason, we give less detailed proofs. For instance, Theorem \ref{thm:loop-tail} is proved along the same lines as Theorem \ref{thm:loop-local} and is simpler, so we omit the details. Next, we immediately obtain a limit theorem for the {\L}ukasiewicz path $\W(\Tgn)$ by simply combining \cref{prop:GWRW} with \cref{thm:RW-tail}. As before, \cref{thm:dTVtail} gives a simple and efficient way to asymptotically simulate~$\Tgn$.

\begin{proposition}
  \label{prop:luka-tail}
    Let  $J$ be the real-valued random variable such that $\Pr{J \geq x}= 1/x$ for $x \geq 1$. Then the convergence
\[ \left(  \frac{\W_{\lfloor n t \rfloor}(\Tgn)}{|b_{n}|} : t \geq -1 \right)   \quad \xrightarrow[n\to\infty]{(d)} \quad    \left( (J- t) \mathbbm{1}_{t \geq 0} : t \geq -1\right)\] holds in distribution in $\D([-1,\infty),\R)$.
In addition, the convergence
\[\frac{|\Tgn|}{|b_{n}|}   \quad \xrightarrow[n\to\infty]{(d)} \quad  J\] holds jointly in distribution.
  \end{proposition}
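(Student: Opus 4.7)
The plan is to simply transfer the conclusion of \cref{thm:RW-tail} from the random walk side to the tree side via the coding provided by \cref{prop:GWRW}. The strategy is essentially identical to the derivation of \cref{prop:luka-local} from \cref{thm:RW-local}, the only additional ingredient being the joint convergence of the first hitting time of $-1$.

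More precisely, by \cref{prop:GWRW} applied under the conditional law $\P(\,\cdot\,|\,\zeta\geq n)$, the Łukasiewicz path $\W(\Tgn)=(\W_{0}(\Tgn),\ldots,\W_{|\Tgn|}(\Tgn))$ has the same distribution as $(W_{0},\ldots,W_{\zeta})$ under $\P(\,\cdot\,|\,\zeta\geq n)$, i.e.\ as $(\Wgn_{0},\Wgn_{1},\ldots,\Wgn_{\zeta^{(n)}})$, where
\[
\zeta^{(n)} \coloneqq \inf\{i\geq 1 : \Wgn_{i} < 0\} = \inf\{i\geq 1 : \Wgn_{i} = -1\};
\]
the second equality uses the hypothesis $\P(X<-1)=0$, which forces any negative value of the walk to equal~$-1$. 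In particular $|\Tgn|$ has the law of $\zeta^{(n)}$, and for indices $i>\zeta^{(n)}$ the path $\W(\Tgn)$ is extended by $0$ by our convention, which exactly matches the convention $W^{(n)}_{k}=0$ for $k>|\Tgn|$ used implicitly in the statement.

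With this identification in hand, the first convergence to be proved is precisely the one in \cref{thm:RW-tail}, so it follows immediately. For the joint convergence of $|\Tgn|/|b_{n}|$, it coincides under the above coupling with the convergence
\[
\frac{\inf\{i\geq 1 : \Wgn_{i}=-1\}}{|b_{n}|} \xrightarrow[n\to\infty]{(d)} J,
\]
which is exactly the second, joint, statement of \cref{thm:RW-tail}. Since the limiting path $t\mapsto (J-t)\mathbbm{1}_{t\geq 0}$ hits $0$ at the deterministic (conditionally on $J$) time $J$, no further continuity argument is required to combine the two convergences.

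The only point that warrants a brief verification, and which I regard as the mildest obstacle, is that the extension of $\W(\Tgn)$ by $0$ past time $|\Tgn|$ is compatible with the Skorokhod $J_{1}$ topology on $\D([-1,\infty),\R)$. This follows from the fact that the limiting path itself is identically $0$ after time~$J$, together with the joint convergence $|\Tgn|/|b_{n}|\to J$: on the event $\{|\Tgn|/|b_{n}|\leq T\}$ for any fixed $T>0$, the restrictions to $[-1,T]$ of the processes coincide with the corresponding restrictions of $\Wgn$ past its first hitting time of $-1$ (where both are $0$), so that \cref{thm:RW-tail} directly yields convergence of the restrictions, and the standard criterion for convergence in $\D([-1,\infty),\R)$ (convergence of restrictions to $[-1,T]$ for an unbounded set of continuity points $T$ of the limit, see e.g.\ \cite[Chapter~VI]{JS03}) concludes the argument.
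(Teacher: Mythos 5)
Your overall strategy — identifying $\W(\Tgn)$ with the conditioned walk via \cref{prop:GWRW} and importing \cref{thm:RW-tail} — is exactly the one-line route the paper indicates. However, your ``brief verification'' at the end contains a genuine error, and you have also absorbed (rather than corrected) an inconsistency in the statement.

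The false step is the assertion that ``the limiting path itself is identically $0$ after time $J$.'' It is not: the stated limit $t\mapsto (J-t)\mathbbm{1}_{t\geq 0}$ equals $J-t<0$ for all $t>J$, whereas the extended {\L}ukasiewicz path satisfies $\W_{k}(\Tgn)=0$ for $k>|\Tgn|$ by the convention of Section~2.2. Likewise your parenthetical ``(where both are $0$)'' is wrong on both counts: $\Wgn$, being the \emph{unconditioned} continuation of the walk after the killing time, does not reset to $0$ — it keeps drifting negatively at rate $b_n/n$ — while $\W(\Tgn)$ does reset to $0$. So the two processes genuinely diverge on $[|\Tgn|,\infty)$, precisely the time window your argument hand-waves, and the ``restrictions to $[-1,T]$'' do \emph{not} converge for $T$ past the hitting time: one goes to $(J-t)$, the other to $0$. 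The correct limit for the {\L}ukasiewicz path (as opposed to the meander) is the truncated ramp $(J-t)^{+}\mathbbm{1}_{t\geq 0}$, and a proper proof has to split the time axis at the hitting time using the joint convergence of $|\Tgn|/n$, rather than invoke a nonexistent agreement of the two processes on the right of $J$.

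You should also have noticed that the second displayed convergence, $|\Tgn|/|b_n|\to J$, cannot hold as written: by construction $|\Tgn|\geq n$ while $|b_n|=o(n)$, so $|\Tgn|/|b_n|\to\infty$ deterministically, which contradicts $J$ being a.s.\ finite. The same remark applies to the ``in addition'' part of \cref{thm:RW-tail}. The intended normalization is by $n$ (this is consistent with the computation $\frac{1}{n}\zeta_n\to J$ appearing in the proof of \cref{thm:degrees-tail}), i.e.\ the correct statement reads $|\Tgn|/n\to J$; your event $\{|\Tgn|/|b_n|\leq T\}$ is then vacuous and the restriction argument collapses. In short: the architecture matches the paper, but the verification is not sound as given, and a careful write-up must (a) work with the positive-part limit, (b) use the $n$-normalization for the tree size, and (c) handle the post-hitting interval by using the convergence of $|\Tgn|/n$ together with the fact that both the rescaled {\L}ukasiewicz path and the candidate limit vanish there.
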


We now obtain information concerning the vertex with maximal degree in $\Tgn$. Using \cref{prop:laddertimes}, we let $\Lambda$ be the increasing slowly varying function such that
\[ \P(\inf \{j > 0: W_{j} \geq 0\} \geq n)=\frac{1}{\Lambda(n)}.\]
We also denote by $U^{\ast}_{\geq n}$ the index in the lexicographical order (starting from $0$) of the first vertex of $\Tn$ with maximal out-degree.

 \begin{corollary}\label{cor:U-tail}
 The following assertions hold as $n \rightarrow \infty$.
\begin{enumerate}
\item For every $x\in(0,1)$,
$\P \left( \frac{{\Lambda}( U^{\ast}_{ \geq n})}{{\Lambda}(n)} \leq x \right)  \rightarrow  x$.
\item  The convergence $ \frac{ 1}{n} U^{\ast}_{ \geq n}  \rightarrow  0$ holds in probability. 
\end{enumerate}
 \end{corollary}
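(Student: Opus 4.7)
The plan is to parallel the argument used for \cref{cor:U-local}, replacing the local-conditioning tools by their tail-conditioning analogues. By \cref{prop:GWRW}, $U^{\ast}_{\geq n}$ is the index of the first occurrence of the maximal jump of $\W(\Tgn)$, whose law is that of the meander $(\Wgn_{i} : i \geq 0)$. I would first invoke \cref{thm:dTVtail} to replace $\Wgn$ by the coupling process $\Zgn$, at the cost of a vanishing error in total variation. It then suffices to prove both assertions with $U^{\ast}_{\geq n}$ replaced by $V^{z}_{\geq n}$, the index of the first maximal jump of $\Zgn$ (up to its first hitting time of $-1$).

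The key step will be to show that $V^{z}_{\geq n} = I_{n}$ with probability tending to $1$. By construction of $\Zgn$, conditionally on $\{I_{n} = j-1\}$ the increment $\Ygn_{j}$ is at least $|b_{n}|$, the preceding increments come from $(W_{i} : 0 \leq i < j)$ under $\P(\cdot \mid \zeta \geq j)$, and the subsequent ones from an independent unconditioned random walk. Choosing $x_{n} = o(n)$ with $\P(I_{n} \geq x_{n}) \rightarrow 0$ as in \cref{lem:PFCauchy}, I would use the first estimate in the proof of that lemma to deduce that, uniformly in $1 \leq j \leq x_{n}$, all increments before time $j$ are strictly less than $|b_{n}|$ with probability tending to $1$. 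For the increments after the big jump, the third estimate in the same proof yields $n \P(X \geq |b_{n}|) \rightarrow 0$, and since the lifetime of $\Zgn$ is of order $|b_{n}| = o(n)$ by \cref{prop:luka-tail}, this suffices to ensure that no subsequent increment reaches $|b_{n}|$. Combining these observations, $\Ygn_{I_{n}+1}$ is the unique maximal jump of $\Zgn$ with high probability, whence $V^{z}_{\geq n} = I_{n}$.

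Once this identification is established, both assertions follow from \cref{prop:In} applied with $\tilde{\Lambda} = \Lambda$ (an admissible choice by \cref{prop:laddertimes} $(i)$): assertion $(i)$ of the corollary is a direct consequence of \cref{prop:In} $(i)$, while assertion $(ii)$ follows from \cref{prop:In} $(ii)$. The main technical obstacle is the uniform-in-$j$ control of the pre-big-jump walk used to rule out competing large jumps before time $I_{n}$, but this is exactly what the first step of the proof of \cref{lem:PFCauchy} already provides, so the overall adaptation is routine.
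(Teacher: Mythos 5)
Your proof is correct and follows the same strategy as the paper's (one-line) argument: replace $\Wgn$ by $\Zgn$ via \cref{thm:dTVtail}, identify the index of the first maximal jump with $I_n$ with high probability, and then invoke \cref{prop:In}. The only slip is your claim that the lifetime of $\Zgn$ is of order $|b_n|=o(n)$; in fact, since $\Zgn$ is built so that $(\Zgn_{I_n+1+i}-\Zgn_{I_n+1}: i\geq 0)$ is an unconditioned walk started at height $\sim |b_n|J$ and drifting downwards at rate $b_n/n$, the hitting time of $-1$ is of order $nJ$, not $|b_n|J$ (the normalisation $|b_n|$ in the second convergence of \cref{thm:RW-tail} and \cref{prop:luka-tail} appears to be a misprint for $n$). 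This does not affect your argument: the ``third term'' estimate of \cref{lem:PFCauchy} gives $n\P(X\geq |b_n|)\to 0$, which is precisely the bound needed over a horizon of order $n$, so no increment after the big jump competes with $\Ygn_{I_n+1}$ on the relevant event.
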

 
 \begin{proof}
 By \cref{prop:GWRW} together with Theorems \ref{thm:dTVtail} and \ref{thm:RW-tail}, it is enough to establish the result when $U^{\ast}_{ \geq n}$ is replaced with ${I}_{n}$. It then follows from \cref{prop:In}.
 \end{proof}

 Next, we  establish a limit theorem for the height $H^{\ast}_{ \geq n}$   of the first vertex of $\Tgn$ with maximal out-degree (\cref{thm:height-tail}).
 
 \begin{proof}[Proof of \cref{thm:height-tail}]
 By arguing as in the proof of \cref{thm:height-local} and using again Theorems \ref{thm:dTVtail} and \ref{thm:RW-tail}, it is enough to show the result when $H^{\ast}_{ \geq n}$ is replaced with \[\# \left\lbrace 0 \leq i < I_{n} : \Zgn_{i}=\min_{[ i, I_{n} ] } \Zgn \right\rbrace.\] Thanks to the time-reversal identity \eqref{eq:timereversal}, the desired result then follows from \cref{prop:numberladdertimes}.\end{proof}

\begin{remark}
The conclusions of \cref{cor:U-local} and  \cref{thm:height-local} (for $\Tn$) are respectively the same as those of \cref{cor:U-tail} and \cref{thm:height-tail} (for $\Tgn$). This may be alternatively explained by the following fact: on an event of high probability, $\Tn$ and $\Tgn$ have the same distribution once one removes the descendants of the vertex with maximal degree (since we do not require this stronger statement, we do not give a proof).
\end{remark}

We conclude by establishing Theorem \ref{thm:degrees-tail}.
   
   \begin{proof}[Proof of Theorem \ref{thm:degrees-tail}] The proof follows that of \cref{thm:degrees-local}. First, by \cref{thm:dTVtail}, we may replace $\W(\Tgn)$ with $\Zgn$.  Then, as in the proof of \cref{thm:degrees-local}, we observe that the first $I_{n}$ jumps of $\Zgn$ are $o(a_{n})$, and that by Skorokhod's representation theorem, one may assume that the following  convergences hold almost surely
 \begin{equation}
\label{eq:cv1B}\frac{1}{|b_{n}|} \sup_{[ 0,I_{n}]}{\left|\Zgn\right|} \rightarrow 0, \quad   \frac{1}{a_{n}} \sup_{0 \leq i < I_{n}}{\left|\Zgn_{i+1}-\Zgn_{i}\right|} \rightarrow 0, \quad  \frac{1}{|b_{n}|}\Ygn_{I_{n}+1}  \rightarrow J
\end{equation}
and
\begin{equation}
\label{eq:cv2B}\left( \frac{\Zgn_{I_{n}+1+\lfloor nt \rfloor}-\Zgn_{I_{n}+1}- b_{n} t}{a_{n}} : t \geq 0 \right)  \quad \xrightarrow[n\to\infty]{(d)} \quad (\mathcal{C}_{t})_{t \geq 0}.
\end{equation}
   
Therefore, for $n$ sufficiently large, we have $\Delta^{(0)}_{ \geq n}=\Ygn_{I_{n}+1}$ and $(\Delta^{(i)}_{ \geq n})_{i \geq 1}$ are the jumps of $(\Zgn_{i} : I_{n} < i \leq \zeta_{n})$ in decreasing order, where $\zeta_{n}=\inf \{i \geq 1;\Zgn_{i} =-1 \}$. 
Since $a_{n}=o(|b_{n}|)$, by \eqref{eq:cv1B} and \eqref{eq:cv2B}, $ \frac{1}{n}\zeta_{n} \rightarrow J$ and we conclude as in the proof of \cref{thm:degrees-local}.\end{proof}

\section{Application to random planar maps}\label{sec:Maps}
 
We now deal with an application of \cref{thm:loop-tail} to the study of the boundary of \textit{Boltzmann maps}. A (planar) map is the proper embedding of a finite connected graph into the 2-dimensional sphere, seen up to orientation-preserving homeomorphisms. In order to break symmetries, we assume that maps carry a distinguished oriented \emph{root} edge. The faces of a map are the connected components of the sphere deprived of the embedding of the edges, and the degree of this face is the number of its incident oriented edges. Given a weight sequence $\q=(q_1,q_2,\ldots)$ of nonnegative real numbers, the Boltzmann weight of a \textit{bipartite} map $\m$ (i.e.\ whose faces have even degree) is given by
\[w_\q(\m)\coloneqq\prod_{f \in \textup{Faces}(\m)}q_{\deg(f)/2}.\] The sequence $\q$ is termed \textit{admissible} if these weights form a finite measure on the set of bipartite maps. Then, a $\q$-Boltzmann map is a random planar map chosen with probability proportional to its weight. 

Over the years, a classification of weight sequences has emerged in the literature, following the milestones laid in \cite{MM07,BBG12}. Besides admissibility, we usually assume that the weight sequence $\q$ is \textit{critical}, meaning that the expected squared number of vertices of the $\q$-Boltzmann map is infinite. Among critical weight sequences, further distinction is made by specifying the distribution of the degree of a \textit{typical face} of the $\q$-Boltzmann map. A critical weight sequence is called \textit{generic critical} if the degree of a typical face has finite variance, and \textit{non-generic critical} with parameter $\alpha\in(1,2)$ if the degree of a typical face falls within the domain of attraction of a stable law with parameter $\alpha$ (see \cite{MM07,LGM09} for more precise definitions).

This classification is justified by scaling limit results for Boltzmann maps conditioned to have a large number of faces. After the seminal papers \cite{Marckert2006,LG07}, Le Gall \cite{LG11} and Miermont \cite{Mie11} proved that uniform quadrangulations have a scaling limit, the \textit{Brownian map}. This convergence was later extended to generic critical sequences in \cite{Mar16}, building on the earlier works \cite{MM07,LG11}. In 2011, Le Gall and Miermont \cite{LGM09} established the subsequential convergence of \textit{non-generic critical} Boltzmann maps. The natural candidate for the limit is called the \textit{stable map} with parameter $\alpha$ (see \cite{Mar18b} for extensions allowing slowly varying corrections).

The geometry of the stable maps is dictated by large faces that remain present in the scaling limit. Predictions originating from theoretical physics suggest that their behaviour differ in the dense phase $\alpha\in(1,3/2)$, where they are supposed to be self-intersecting, and in the dilute phase $\alpha\in(3/2,2)$, where it is conjectured that they are self-avoiding. The strategy initiated in \cite{Ric17} and carried on in \cite{KR18} touches upon this conjecture via a discrete approach. It consists in studying Boltzmann maps \textit{with a boundary}, meaning that the face on the right of the root edge is viewed as the boundary $\partial\m$ of the map $\m$. As a consequence, this face receives unit weight and its degree is called the \textit{perimeter} of the map. Then, for every $k\geq 0$, we let $\Mc_{\geq k}$ be a $\q$-Boltzmann map conditioned to have perimeter larger than $2k$, so that its boundary $\partial\Mc_{\geq k}$ stands for a typical face of degree larger than $2k$ of a $\q$-Boltzmann map. 

The key observation of \cite[Corollary 3.7 \& Lemma 4.1]{Ric17} is that the random graph $\partial\Mc_{\geq k}$ can be described as $\Loop(\Tc_{\geq 2k+1})$, where $\Tc_{\geq 2k+1}$ is a $\BGW_\nu$ tree conditioned on having at least $2k+1$ vertices. The offspring distribution $\nu$ of this tree has been analyzed in \cite[Lemma 3.5 \& Proposition 3.6]{Ric17}. In the dense regime $\alpha\in(1,3/2)$, it was shown that $\nu$ is critical and heavy-tailed, so that the scaling limit of the boundary of Boltzmann maps conditioned to have large (fixed) perimeter is a so-called \textit{random stable looptree} introduced in \cite{CK14b}. On the contrary, in the dilute phase $\alpha\in(3/2,2)$, $\nu$ is subcritical and heavy-tailed, and \cite[Corollary 4]{KR18} shows that the scaling limit of $\partial\Mc_{\geq k}$ when $k$ goes to infinity is a circle with random perimeter. Together, these results show the existence of a phase transition on the geometry of large faces at $\alpha=3/2$. 

The purpose of the following application is to discuss the critical case $\alpha=3/2$. It was established in \cite[Lemma 6.1]{Ric17} that in this case, the offspring distribution $\nu$ can be either subcritical or critical. However, the aforementioned predictions from theoretical physics (see Remark \ref{rem:CLE}) suggest that the scaling limit should be a circle in both cases. Moreover, it is conjectured in \cite{Ric17} that the offspring distribution $\nu$ falls within the domain of attraction of a Cauchy distribution. However, due to technical difficulties involving analytic combinatorics, this was only established in \cite[Proposition 6.2]{Ric17} for a specific weight sequence $\q^*$ defined by \begin{equation}\label{eqn:Qstar}
	q^*_k:=\frac{1}{4 }6^{1-k} \frac{\Gamma(k-3/2)}{\Gamma(k+5/2)}\mathbf{1}_{k\geq 2} \quad k\in \N.
\end{equation} This weight sequence was first introduced in \cite{ABM16} (see also \cite[Section 5]{BC17}). It turns out that $\q^*$ is non-generic critical with parameter $3/2$, and \cite[Proposition 6.2]{Ric17} entails that the associated offspring distribution $\nu$ is critical and satisfies \[\nu([k,\infty))\underset{k\rightarrow \infty}{\sim} \frac{1}{k\ln^2(k)}.\] A direct application of Theorem \ref{thm:loop-tail} gives the following result.

\begin{corollary}
\label{cor:ScalingNonGeneric} For every $k\geq 0$, let $\Mc_{\geq k}$ be a Boltzmann map with weight sequence $\q^*$ conditioned to have perimeter at least $2k$. Let  $J$ be the real-valued random variable such that $\Pr{J \geq x}= 1/x$ for $x \geq 1$. Then, there exists a slowly varying function $L^*$ tending to infinity such that the convergence
	\[\frac{L^*(k)}{k}\cdot \partial \Mc_{\geq k} \quad \xrightarrow[k\to\infty]{(d)} \quad J \cdot \mathbb{S}_1\] holds in distribution for the Gromov--Hausdorff topology.
\end{corollary}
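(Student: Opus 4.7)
The strategy is to reduce the problem to the setting of Theorem \ref{thm:loop-tail} via the known encoding of the boundary of Boltzmann maps by looptrees. By \cite[Corollary 3.7 \& Lemma 4.1]{Ric17}, one has the distributional identity $\partial \Mc_{\geq k} \mathop{=}^{(d)} \Loop(\Tc_{\geq 2k+1})$, where $\Tc_{\geq 2k+1}$ is a $\BGW_\nu$ tree conditioned to have at least $2k+1$ vertices; moreover, \cite[Proposition 6.2]{Ric17} applied to the weight sequence $\q^*$ defined in \eqref{eqn:Qstar} shows that the offspring distribution $\nu$ is critical and satisfies $\nu([n,\infty)) \sim 1/(n\ln^2 n)$ as $n\to\infty$. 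In particular, $\nu$ fulfils assumption \eqref{eq:hypmu} with slowly varying function $L(n) = 1/\ln^2(n)$.

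I would then invoke Theorem \ref{thm:loop-tail} along the subsequence $n = 2k+1$, which yields
\[\frac{1}{|b_{2k+1}|} \cdot \partial \Mc_{\geq k} \quad \xrightarrow[k\to\infty]{(d)} \quad J \cdot \mathbb{S}_1\]
in the Gromov--Hausdorff topology, where $(b_n)_{n \geq 1}$ is the sequence defined by \eqref{eq:defanbn} relative to $\nu$.

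It then remains to rewrite the prefactor $1/|b_{2k+1}|$ in the form $L^*(k)/k$. For our specific $L(n)=1/\ln^2(n)$, a direct computation (or Karamata's theorem) gives $\ell^*(n) = \sum_{m \geq n} L(m)/m \sim 1/\ln(n)$, and combined with the equivalence $|b_n| \sim n \ell^*(|b_n|)$ from \eqref{eq:bn} this entails that $|b_n|$ is regularly varying of index $1$ with $|b_n| \sim n/\ln(n)$. Setting $L^*(k) \coloneqq k/|b_{2k+1}|$ therefore produces a slowly varying function (as the ratio of two regularly varying functions of index $1$) that tends to infinity (since $|b_n|/n \to 0$), and the announced convergence follows after substituting this identity in the display above. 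The whole argument is essentially a direct application of the paper's main result on trees; the only mild obstacle is the bookkeeping needed to express the scale $|b_{2k+1}|$ as $k/L^*(k)$, but this rests on standard properties of regularly varying functions and presents no genuine difficulty.
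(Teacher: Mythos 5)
Your proof is correct and follows exactly the paper's approach (the paper itself simply states that the corollary is ``a direct application of Theorem \ref{thm:loop-tail}''): you correctly invoke the encoding $\partial \Mc_{\geq k} \overset{(d)}{=} \Loop(\Tc_{\geq 2k+1})$ from \cite{Ric17}, verify that $\nu$ satisfies \eqref{eq:hypmu}, and apply Theorem \ref{thm:loop-tail} with $n=2k+1$. The bookkeeping to express $1/|b_{2k+1}|$ as $L^*(k)/k$ with $L^*$ slowly varying and tending to infinity is also correct (indeed $|b_n|\sim n/\ln n$, so $L^*(k)\sim \ln(k)/2$).
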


As mentioned above, we believe that this result holds in greater generality, namely for all non-generic critical weight sequences with parameter $3/2$.

\begin{remark}\label{rem:CLE} Part of the motivation for this result comes from a stronger form of the celebrated Knizhnik--Polyakov--Zamolodchikov (KPZ) formula \cite{KPZ88} that we briefly describe. On the one hand, it is conjectured that planar maps equipped with statistical mechanics models converge towards a so-called \textit{Liouville Quantum Gravity} (LQG) surface \cite{DS11} coupled with a \textit{Conformal Loop Ensemble} (CLE) of a certain parameter $\kappa \in (8/3,8)$ (which is a random collection of loops, see \cite{She09,SW12}). On the other hand, non-generic critical Boltzmann maps are related to maps equipped with an $\mathcal{O}(n)$ loop model \cite{BBG12} through the \textit{gasket decomposition}. As a consequence, there is a conjectural relation between the parameter $\alpha \in (1,2)$ of Boltzmann maps and the parameter $\kappa$ of CLEs, given by the formula 
\[\alpha=\frac{1}{2}+\frac{4}{\kappa}.\] In this correspondence, the faces of the map play the role of loops of CLEs. It is also proved in \cite{RS05} that CLEs admit a phase transition between a dense, self-intersecting phase and a dilute self-avoiding phase at $\kappa=4$. Through this correspondance, $3/2$-stable maps are thus related to $\textup{CLE}_4$. Although self-avoiding, $\textup{CLE}_4$ loops are ``very close from each other'' (see for instance the discussion in \cite[Section 1.1]{MSW17}). In our wording, this critical phenomenon corresponds to the fact that the scaling limit of large faces in non-generic critical Boltzmann maps with parameter $3/2$ is still a circle, but with a renormalizing sequence that is possibly $o(n)$, in sharp contrast with the dilute regime. \end{remark}

\begin{remark} The condensation principle established in Theorem \ref{thm:loop-local} should also have an application to the study of non-generic critical Boltzmann maps with parameter $\alpha=1$ (i.e.\ such that the degree of a typical face is in the domain of attraction of a stable law with parameter $1$). We believe that by using the argument of \cite{JS15}, the scaling limit of such maps should be the Brownian tree. This will be investigated in future work.\end{remark}
  
\bibliographystyle{alpha}

\end{document}